\documentclass[12pt]{amsart}
\usepackage{txfonts}      
\usepackage{amssymb}
\usepackage{eucal}
\usepackage{amsmath}
\usepackage{amscd}
\usepackage{xcolor}
\usepackage{multicol}
\usepackage[all]{xy}           
\usepackage{graphicx}
\usepackage{color}
\usepackage{colordvi}
\usepackage{xspace}
\usepackage{tikz}
\usetikzlibrary{positioning, arrows.meta}
\usepackage{makecell}
\usepackage{appendix}
\usepackage{amsthm}
\usepackage[misc]{ifsym}
\usepackage{mathrsfs} 
\usepackage{xypic}
\usepackage{extarrows}

\usepackage{ifpdf}
\ifpdf
\usepackage[colorlinks,final,backref=page,hyperindex]{hyperref}
\else
\usepackage[colorlinks,final,backref=page,hyperindex,hypertex]{hyperref}
\fi

\usepackage[active]{srcltx} 

\begin{document}
	\newtheorem{theorem}{Theorem}[section]
	\newtheorem{lemma}[theorem]{Lemma}
	\newtheorem{corollary}[theorem]{Corollary}
	\newtheorem{proposition}[theorem]{Proposition}
	\newtheorem{question}[theorem]{Question}
	\theoremstyle{definition}
	\newtheorem{definition}[theorem]{Definition}
	\newtheorem{example}[theorem]{Example}
	\newtheorem{remark}[theorem]{Remark}
	\newtheorem{pdef}[theorem]{Proposition-Definition}
	\newtheorem{condition}[theorem]{Condition}
	\renewcommand{\labelenumi}{{\rm(\alph{enumi})}}
	\renewcommand{\theenumi}{\alph{enumi}}
	\baselineskip=14pt

	\newcommand {\emptycomment}[1]{} 
	
	\newcommand{\nc}{\newcommand}
	\newcommand{\delete}[1]{}
	\newcommand{\bfk}{\mathbf{k}}
	\newcommand{\bfm}{\mathbf{m}}
	\newcommand{\id}{\textrm{id} }
	\newcommand{\ml}{\mathcal{L}}
	\newcommand{\mr}{\mathcal{R}}
	\newcommand{\N}{\mathcal{N}}
	\newcommand{\mt}{\mathcal{T}}
	\newcommand{\m}{\mathrm{m}}
	\newcommand{\we}{\widetilde }
	\newcommand{\dlie}{\mathsf{DLie}}
	\newcommand{\D}{\mathsf{D}}
	\newcommand{\Bij}{\mathsf{Bij}}
	\newcommand{\C}{\mathsf{C}}
	\newcommand{\fraks}{\mathfrak{S}}
	\newcommand{\smc}{\mathfrak{S}{\text-mod}_{\ch_\bfk}}
	\newcommand{\sbmc}{\mathfrak{S}{\text-bimod}_{\ch_\bfk}}
	\newcommand{\ch}{\mathsf{Ch}}
	\nc{\g}{\mathfrak{g} }
	\newcommand{\en}{\operatorname{End}}
	\newcommand{\Hom}{\operatorname{Hom}}
	\newcommand{\Func}{\operatorname{Func}}
	\newcommand{\Ind}{\operatorname{Ind}}
	\newcommand{\Aut}{\operatorname{Aut}}
	\newcommand{\mc}{\mathrm{C}}
	\nc{\lc}{\{\!\{  }
	\nc{\rc}{ \}\!\} }
	\nc{\lb}{[\![  }
	\nc{\rb}{ ]\!] }
	\nc{\ad}{\textrm{ad}}
	\nc{\bfr}{\mathbf{r}}
	\nc{\bfs}{\mathbf{s}}
	\nc{\bfu}{\mathbf{u}}
	\nc{\xzy}{\color{red}}
	
	\newcommand{\cm}[1]{\textcolor{red}{\underline{CM:}#1 }}

	\font\cyr=wncyr10
	\title{Controlling graded Lie algebras and cohomologies of double Lie algebras}
	
	\author{Zhongyin Xu}
	\address{Chern Institute of Mathematics \& LPMC, Nankai University,
		Tianjin, 300071, China}
	\email{zy\_xu@mail.nankai.edu.cn}
	\author{Chengming Bai}
	\address{Chern Institute of Mathematics \& LPMC, Nankai University, Tianjin 300071, PR China}
	\email{baicm@nankai.edu.cn}
	
	\subjclass[2020]{17A30, 17B38, 16E40, 17B70}
	\keywords{Double Lie algebra, properad,  Rota-Baxter operator, differential graded Lie algebra, cyclic cohomology }
	\begin{abstract}
		
		We construct a graded Lie algebra structure on the space of cyclically skew-symmetric cochains  whose Maurer-Cartan elements characterize double Lie algebra structures. Twisting by a fixed double Lie bracket yields a cochain complex that is isomorphic to the positive arity part of the complex  introduced by Fairon and Valeri after reindexing and degreewise sign adjustment.
		Using the Koszul resolution of the double Lie properad, we identify the resulting differential graded Lie algebra
		with  the  convolution differential graded Lie algebra on the deformation complex of the fixed double Lie algebra. 
		We also construct a graded Lie algebra governing skew-symmetric Rota-Baxter operators on  a finite-dimensional symmetric Frobenius algebra. For a finite-dimensional vector space $V$ and $A=\en(V)$, we lift the known correspondence between double Lie algebra structures on $V$ and skew-symmetric Rota-Baxter operators on $A$ to an isomorphism
		of their governing graded Lie algebras. After twisting by 
		corresponding Maurer-Cartan elements, this isomorphism becomes an
		isomorphism of differential graded Lie algebras and hence induces
		an isomorphism between the associated cohomology groups. We
		further prove that the cohomology of a skew-symmetric Rota-Baxter
		operator on a symmetric Frobenius algebra can be seen as the
		cyclic cohomology of its descendent associative algebra. Thus
		the cohomology of a double Lie algebra is identified with the
		cyclic cohomology of the descendent associative algebra
		associated with the skew-symmetric Rota-Baxter operator on the
		matrix algebra.

	\end{abstract}

	\maketitle
	\tableofcontents
	\section{Introduction}
	The notion of  double Poisson algebras  was introduced by Van den Bergh as a framework for noncommutative Poisson geometry  \cite{VdB},    under the Kontsevich-Rosenberg principle: if $A$ is a double Poisson algebra, then the associated affine representation schemes $\operatorname{Rep}_n(A)$ have (classical) Poisson structures.  They are closely related to   noncommutative geometry of quiver algebras  \cite{CBEG07,PVW08}, pre-Calabi-Yau algebras and homotopy double Poisson structures  \cite{IKV21,QW26}, the Poisson geometry of representation theory  \cite{VdB,VdW08}, and  noncommutative integrable systems  \cite{Arth15,DKV15}.
	
	A double Lie algebra is obtained by retaining from a double Poisson algebra the cyclic skew-symmetry and the double Jacobi identity, while forgetting the associative product and the double Leibniz rule. This notion was considered explicitly by     Schedler in connection with the associative Yang-Baxter    equation  \cite{S09}.  Moreover, every double Lie bracket on a vector space $V$ extends  uniquely, by the double Leibniz rule, to a double Poisson bracket on the    tensor algebra $T(V)$.  Therefore, double Lie algebras provide generator-level data for a natural class of double Poisson structures on free associative algebras.

	Fairon and Valeri  introduced a cochain complex and the associated cohomology for double Lie  algebras in  \cite[Section 4.1.4]{FV25}. In positive degrees, their cochains are linear maps $V^{\otimes n}\rightarrow V^{\otimes n}$ ($n\geq 1$)
	satisfying  cyclic skew-symmetry, and its differential is defined by insertion formulas involving a fixed double Lie bracket.
	In  classical deformation theory, the cochain complexes of certain classes of algebras often carry  a graded Lie algebra structure.  For example, after the standard degree shift, the Hochschild cochain complex
	of an associative algebra  with coefficients in the algebra itself   carries the Lie bracket of Gerstenhaber  \cite{G63},  while the Chevalley-Eilenberg cochain complex of a Lie algebra with adjoint coefficients carries
	the Lie bracket of Nijenhuis-Richardson \cite{NR67}. These examples suggest seeking an analogous explicit graded Lie
	structure on the cochain complex of a double Lie algebra.
	Moreover,  the convolution formalism in properadic deformation theory  provides   a conceptual framework for constructing graded Lie algebra structures on cochain complexes \cite{MV09,MV092}.  Leray proved that the properad governing double Lie algebras is Koszul \cite{Leray20}.
	Together with the results of Merkulov and Vallette,  there is a convolution differential graded (dg) Lie algebra structure on the deformation complex of a double Lie algebra via its Koszul resolution.
	So it is natural to relate this construction to the aforementioned dg Lie algebra on the double Lie algebra cochain   complex.
	This leads to our first question.
	\begin{question}\label{q1}
		Can one construct an explicit dg Lie algebra whose underlying graded vector space is the space of cyclically skew-symmetric cochains and differential  is  determined by a fixed  double Lie bracket?
		Can the resulting dg Lie algebra be identified with the convolution dg Lie algebra?
	\end{question}

	The notion of a Rota-Baxter operator originated in Baxter's work
	\cite{Bax} and was subsequently developed by Rota and  others
	\cite{R}.  It has important connections with, among other
	subjects,  renormalization in quantum field theory and operadic
	splitting  procedures  \cite{CK2,BBGN}. A systematic deformation
	theory of $\mathcal O$-operators on Lie algebras, including
	Rota-Baxter operators as a special case, was initiated in
	\cite{TBGS}, and its associative counterpart was subsequently
	developed in  \cite{Das20}.
	
	Our second question concerns a concrete realization of the aforementioned cochain complex of a double Lie algebra motivated by the following  correspondence. For a finite-dimensional vector space $V$, double Lie brackets on $V$ correspond to skew-symmetric Rota-Baxter operators on $A=\en(V)$, equipped with the trace pairing  \cite{GK18,ORS13,S09}.
	Therefore, it is natural to ask whether this correspondence  extends  from algebraic structures to the controlling
	graded Lie algebras.
	There is also a cohomological reason to investigate this extension. The cohomology of an $\mathcal {O}$-operator $T:V\rightarrow A$ on a Lie algebra  $A$ associated to a representation $V$ can be identified with
	the Chevalley-Eilenberg cohomology of  the induced Lie algebra $V$ with coefficients in $A$  \cite{TBGS} and the cohomology of
	$\mathcal O$-operators on associative algebras can be seen as the Hochschild cohomology of the induced associative algebra with a suitable   coefficient bimodule \cite{Das20}.
	However, the full Rota-Baxter cochain complex does not impose skew-symmetry. For a symmetric Frobenius algebra, the relevant issue is how
	the Frobenius pairing translates this constraint into a condition on scalar-valued cochains. These observations lead to a single comparison
	problem.
	\begin{question}\label{q2}
		Does the correspondence between double Lie brackets and
		skew-symmetric Rota-Baxter operators extend to an isomorphism of  their controlling graded Lie algebras and hence induce an isomorphism of their cohomology groups?  Can the cohomology of a skew-symmetric Rota-Baxter operator
		$\mr$ on a finite-dimensional symmetric Frobenius algebra $A$ be identified with a classical cohomology theory of its descendent associative algebra $A_{\mr}$?
		If so, when $A=\en(V)$, how does this identification relate the
		cohomology of $\mr$ as well as $A_{\mr}$ to the cohomology of the
		corresponding double Lie algebra structure on $V$?
	\end{question}

	The purpose of this paper is to answer Questions \ref{q1}-\ref{q2}.  The main results  are summarized as follows.
	\begin{enumerate}
		\item We construct a graded   Lie algebra $(\mathcal{C}_{dL}^\bullet (V),[\cdot,\cdot]_{dL})$ whose  Maurer-Cartan elements are precisely the  double Lie algebra structures on $V$; see Theorem \ref{th-dgla} and Corollary \ref{cor-mc-dla}. A fixed double Lie bracket $\mu\in\mathcal{C}^1_{dL}(V)$ determines the differential $d_\mu=[\mu,\cdot]_{dL}$ and hence a cohomology theory for the double Lie algebra $(V,\mu)$; see Corollary \ref{cor-mc-dla} and Definition \ref{def-dL-cochain}.
		
		Using the Koszul resolution of the double Lie properad, we identify $( \mathcal{C}_{dL}^\bullet(V),[\cdot,\cdot]_{dL},d_\mu)$ with the convolution dg Lie algebra $( \mathsf{Hom}_{\fraks}^\bullet (\D^{\text{\textexclamdown}},\en_V),[\cdot,\cdot]_{\mathrm{aug} } ,[\alpha_\mu,\cdot]_{\mathrm{aug}} )$; see Theorem \ref{th-properadic-deformation-complex}. Consequently, Corollary \ref{cor-dcc-dlc} gives an isomorphism of cohomology groups
		$$
		H^\bullet ( \mathsf{Hom}_\fraks^\bullet( \D^{\text {\textexclamdown} }, \en_V ), d_{\alpha_\mu} ) \cong H_{dL}^\bullet(V,\mu).
		$$
		This gives the double Lie algebra cochain complex and its cohomology a properadic interpretation and answers  Question \ref{q1}.
		\item  For a  finite-dimensional  symmetric Frobenius algebra $(A,\langle\cdot,\cdot\rangle)$, we construct a graded vector subspace  $\mc_{rb}^\bullet (A) \subset \bigoplus_{n\geq 0}\Hom(A^{\otimes n},A)$ and prove it  is closed under $[\cdot,\cdot]_{rb}$. Its Maurer-Cartan elements are precisely the skew-symmetric Rota-Baxter operators on $A$; see Theorem \ref{th-dgla-ssrb}  and Corollary \ref{cor-mc-ssrb}.
		When $A=\en(V)$, the structure correspondence extends to an isomorphism of graded Lie algebras
		$$
		\Psi:(\mathcal{C}_{dL}^{\bullet}(V),[\cdot,\cdot]_{dL})\xlongrightarrow{\cong}   (\mc_{rb}^{\bullet}(A),[\cdot,\cdot]_{rb}) .
		$$
		Therefore, it gives an isomorphism of dg Lie algebras after twisting by double Lie bracket $\mu$ and  skew-symmetric Rota-Baxter operator $\mr=\Psi_1(\mu)$ , and  then  gives an isomorphism of cohomology groups
		$$
		H_{dL}^{\bullet}(V,\mu)\cong    H_{rb,\mathrm{cyc}}^{\bullet}(A,\mr).
		$$ See Theorem \ref{th-dl-rb-dgla-isomorphism}  and Corollary \ref{cor-dl-rb-cohomology} .
		
		We show that the cyclic Rota-Baxter cochain complex $ (\mc_{rb}^\bullet(A), d_\mr )$ is isomorphic to the cyclic complex $(\mc_\tau^\bullet (A_\mr),d_C)$. Consequently, the cohomology of the skew-symmetric Rota-Baxter operator $\mr$ on $A$ is naturally isomorphic to the cyclic cohomology of the descendent associative algebra $A_\mr$:
		$$
		H_{rb,\mathrm{cyc}}^{p}(A,\mr)\cong HC_{\tau}^{p}(A_{\mr}), \qquad  p\geq0.
		$$
		In particular, if $A=\en(V)$ and $\mr$ corresponds to a double Lie bracket $\mu$ on $V$, then
		$$
		H_{dL}^{p}(V,\mu)\cong H_{rb,\mathrm{cyc}}^{p}(\en(V),\mr)\cong HC_{\tau}^{p} ((\en(V))_{\mr} ),\qquad p\geq0.
		$$
		See Proposition \ref{prop-cyclic-RB-cyclic-Hochschild} and Theorem \ref{th-dl-cyclic-Hochschild}.
		These comparisons answer Question \ref{q2}.
	\end{enumerate}

	The relationships established above can be summarized by the following  diagram:
	\begin{equation*}
		\resizebox{\textwidth}{!}{$
			\xymatrix@C=1.5cm@R=1.4cm{
				& &\txt{ deformation complex\\of   $(V,\mu)$ \\ $( \mathsf{Hom}_{\fraks}^{\bullet}
					(\D^{\text{\textexclamdown}},\en_V),
					[\alpha_\mu,\cdot]_{\mathrm{aug}} )$ } \ar[r]^{\scriptstyle H^{\bullet}} \ar@{<->}[d]^{\scriptstyle\cong}_{\scriptstyle Th. \ref{th-properadic-deformation-complex} }&\txt{ properadic  cohomology\\ $ H^\bullet ( \mathsf{Hom}_\fraks^\bullet( \D^{\text {\textexclamdown} }, \en_V),d_{\alpha_\mu}) $ }  \ar@{<->}[d]^{\scriptstyle\cong}_{\scriptstyle Cor. \ref{cor-dcc-dlc}} \\
				\txt{double Lie bracket\\
					$\lc\cdot,\cdot\rc$ on $V$}
				\ar@{<->}[d]_{\scriptstyle Th. \ref{th-dl-rb}}
				\ar@{<->}[r]^{\scriptstyle Cor. \ref{cor-mc-dla}\quad}
				&
				\txt{Maurer-Cartan element\\
					$\mu\in\mathcal{C}_{dL}^{1}(V)$}
				\ar@{<->}[d]_{\scriptstyle Prop. \ref{prop-dl-rb-cochain-identification}}
				\ar[r]^{\scriptstyle\quad\mathrm{twist}_{\mu}}
				&
				\txt{double Lie algebra\\
					cochain complex\\
					$\bigl(\mathcal{C}_{dL}^{\bullet}(V),d_{\mu}\bigr)$}
				\ar@{<->}[d]^{\scriptstyle\cong}_{\scriptstyle Cor. \ref{cor-dl-rb-cohomology}}
				\ar[r]^{\scriptstyle H^{\bullet}}
				&
				\txt{double Lie algebra\\ cohomology\\
					$H_{dL}^{\bullet}(V,\mu)$}
				\ar@{<->}[d]^{\scriptstyle\cong}_{\scriptstyle Cor. \ref{cor-dl-rb-cohomology}}
				\\
				\txt{skew-symmetric\\
					Rota-Baxter operator\\
					$\mr$ on $A=\en(V)$}
				\ar@{<->}[r]^{\scriptstyle Cor. \ref{cor-mc-ssrb}\quad}
				&
				\txt{Maurer-Cartan element\\
					$\mr\in\mc_{rb}^{1}(A)$}
				\ar[r]^{\scriptstyle\quad \mathrm{twist}_{\mr}}
				&
				\txt{cyclic Rota-Baxter \\cochain complex\\
					$\bigl(\mc_{rb}^{\bullet}(A),d_{\mr}\bigr)$}
				\ar@{<->}[d]^{\scriptstyle\cong}_{\scriptstyle Prop. \ref{prop-cyclic-RB-cyclic-Hochschild}}
				\ar[r]^{\scriptstyle H^{\bullet}}
				&
				\txt{cyclic Rota-Baxter \\ cohomology \\$H_{rb,\mathrm{cyc}}^{\bullet}(A,\mr)$}
				\ar@{<->}[d]^{\scriptstyle\cong}_{\scriptstyle   Prop. \ref{prop-cyclic-RB-cyclic-Hochschild} }
				\\
				& &
				\txt{cyclic complex\\$\bigl(\mc_{\tau}^{\bullet}(A_{\mr}),d_C\bigr)$}
				\ar[r]^{\scriptstyle H^{\bullet}}
				&
				\txt{cyclic cohomology \\$HC_{\tau}^{\bullet}(A_{\mr})$}
			}
			$}
	\end{equation*}

	The paper is organized as follows.  In Section \ref{sec-dgla-dla}, we  construct the graded Lie algebra whose Maurer-Cartan elements characterize double Lie algebra structures, give rise to a double Lie algebra cochain complex by twisting a fixed double Lie bracket and establish the  properadic interpretation of the double Lie algebra cochain complex and the resulting cohomology.
	In Section \ref{sec-dgla-rb}, we construct the graded Lie  algebra whose Maurer-Cartan elements are precisely  skew-symmetric Rota-Baxter operators on a symmetric Frobenius algebra. We subsequently specialize to the matrix algebra and establish the  graded Lie algebra and cochain-complex isomorphisms with the double Lie algebra side.
	Finally, we identify double Lie algebra cohomology with the cyclic  cohomology of the descendent associative algebra associated with the skew-symmetric Rota-Baxter operator on the matrix algebra.

	\textbf{Notation}:
	Throughout this paper, let  $\bf k$ be a field of characteristic zero. All tensor products over ${\bf k}$ are denoted by $\otimes$. We denote the identity map by $\id$. All vector spaces $V$ and algebras $A$ over ${\bf k}$   are assumed to be finite-dimensional   unless otherwise stated, even though many results still hold in the infinite-dimensional  cases.
	For $n\geq 1$, let $\sigma_n:V^{\otimes n}\rightarrow V^{\otimes n}$ denote the cyclic
	permutation determined by
	$$
	(a_1\otimes \cdots \otimes a_n)^{\sigma_n} =\sigma_n(a_1\otimes \cdots \otimes a_n)  =a_n\otimes a_1\otimes \cdots \otimes a_{n-1},\qquad a_1,\dots,a_n\in V.
	$$
	When the tensor power is clear from the context, we simply write $\sigma$.

	\section{Differential graded Lie algebras associated with double Lie algebras}\label{sec-dgla-dla}
	We first  construct an explicit graded Lie algebra whose
	Maurer-Cartan elements are precisely the double Lie algebra
	structures on a given vector space. It  follows that a given
	double Lie bracket gives rise to a differential on this graded Lie
	algebra, thereby providing the cohomology of the double Lie
	algebra.  Finally, we identify this dg Lie algebra with the  convolution dg Lie algebra associated with the Koszul resolution of the double Lie properad.
	
	\subsection{    The graded Lie algebra governing double Lie algebra structures}
	\begin{definition}
		A  \textbf{double Lie algebra} is a vector space  $V$ equipped with a double bracket $\lc \cdot,\cdot\rc:V\otimes V\rightarrow V\otimes V$ satisfying the following identities: 
		\begin{align}
			&\tag{skewsymmetry} \label{skewsymmetry}\lc a,b \rc=-\lc b,a \rc^\sigma,\\
			&\tag{Jacobi identity}  \label{Jacobi identity} \lc a,\lc b,c \rc \rc_L+\lc b,\lc c,a \rc \rc_L^\sigma+\lc c,\lc a,b \rc \rc_L^{\sigma^2}=0 , \qquad a,b,c\in V.
		\end{align}
		(Here we use the notation $\lc a, b\otimes c\rc_L=\lc a,b\rc\otimes c $). 
	\end{definition}
	
	\begin{definition}
		Let $(\mathfrak{g}=\bigoplus_{k\in\mathbb{Z}} \mathfrak{g}^k,[\cdot,\cdot],d)$ be a \textbf{differential graded Lie algebra} (dg Lie algebra). A degree $1$ element $x\in \mathfrak{g}^1$ is called a \textbf{Maurer-Cartan element} of $\mathfrak{g}$ if it satisfies  the Maurer-Cartan equation
		\begin{equation*}
			dx+\frac{1}{2}[x,x]=0.
		\end{equation*}
		A graded Lie algebra is a dg Lie algebra with $d=0$.
	\end{definition}
	Let $V$ be a vector space.
	For $p\geq0$, set $$
	\mathcal{C}^p_{dL}(V):=\{ P\in \en(V^{\otimes (p+1)})=\operatorname{Hom}(V^{\otimes (p+1)},V^{\otimes (p+1)})| \sigma_{p+1}P\sigma_{p+1}^{-1}=(-1)^pP\}
	$$
	and define $\mathcal{C}^\bullet_{dL}(V)=\bigoplus_{p\geq0} \mathcal{C}^p_{dL}(V)  $. We assume that the degree of an element in  $\mathcal{C}^p_{dL}(V)$ is $p$.
	
	We define the $\square$-product of $P\in  \mathcal{C}^p_{dL}(V)$ and $Q\in \mathcal{C}^q_{dL}(V)$ by
	\begin{equation}
		(P\square Q) :=\sum_{s=0}^{p+q}(-1)^{(p+q)s}\sigma_{p+q+1}^s\circ (P\diamond Q)\circ \sigma_{p+q+1}^{-s},
	\end{equation}
	where $ P\diamond Q=    (P\otimes\id^{\otimes q})\circ(\id^{\otimes p}\otimes Q)$.
	\begin{lemma}
		For $P\in  \mathcal{C}^p_{dL}(V)$ and $Q\in \mathcal{C}^q_{dL}(V)$, one has $P\square Q\in  \mathcal{C}^{p+q}_{dL}(V)$.
	\end{lemma}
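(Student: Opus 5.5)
This is a direct computation. Write $N=p+q+1$, $\sigma=\sigma_N$ and $X=P\diamond Q$, so that
$$
P\square Q=\sum_{s=0}^{p+q}(-1)^{(p+q)s}\sigma^{s}X\sigma^{-s}.
$$
We must show that $\sigma(P\square Q)\sigma^{-1}=(-1)^{p+q}P\square Q$.

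Conjugating by $\sigma$ shifts the summation index. We get
$$
\sigma(P\square Q)\sigma^{-1}=\sum_{s=0}^{p+q}(-1)^{(p+q)s}\sigma^{s+1}X\sigma^{-(s+1)}=(-1)^{p+q}\sum_{t=1}^{p+q+1}(-1)^{(p+q)t}\sigma^{t}X\sigma^{-t},
$$
where $t=s+1$ and we used $(-1)^{(p+q)s}=(-1)^{p+q}(-1)^{(p+q)t}$. The last sum differs from the sum defining $P\square Q$ in a single term: the term with $t=N=p+q+1$ replaces the term with $t=0$.

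Since $\sigma^{N}=\id$ on $V^{\otimes N}$, the $t=N$ term is
$$
(-1)^{(p+q)(p+q+1)}\sigma^{N}X\sigma^{-N}=(-1)^{(p+q)(p+q+1)}X=X,
$$
because $(p+q)(p+q+1)$ is a product of consecutive integers and hence even. This equals the $t=0$ term. Therefore $\sigma(P\square Q)\sigma^{-1}=(-1)^{p+q}P\square Q$.

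Moreover $P\square Q\in\en(V^{\otimes(p+q+1)})$, since $P\otimes\id^{\otimes q}$ and $\id^{\otimes p}\otimes Q$ are both endomorphisms of $V^{\otimes(p+q+1)}$. Hence $P\square Q\in\mathcal{C}^{p+q}_{dL}(V)$.

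The only delicate step is the sign bookkeeping at the wrap-around term, which is handled by the parity of $(p+q)(p+q+1)$. The hypotheses on $P$ and $Q$ are not needed for this lemma: the sum is just the projection onto the $(-1)^{p+q}$-eigenspace of conjugation by $\sigma$, up to a factor.
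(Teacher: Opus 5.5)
Your proof is correct and follows the same route as the paper: conjugate by $\sigma_{p+q+1}$, reindex $t=s+1$, and use $\sigma_{p+q+1}^{p+q+1}=\id$ to match the wrap-around term with the $t=0$ term. The paper writes the sign of that term as $(-1)^{(p+q)^2}=(-1)^{p+q}$ before factoring out $(-1)^{p+q}$, which is equivalent to your evenness argument for $(p+q)(p+q+1)$. You are also right that the cyclic symmetry of $P$ and $Q$ is never used; the paper's argument does not invoke it either.
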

	\begin{proof}
		For $P\in  \mathcal{C}^p_{dL}(V)$ and $Q\in \mathcal{C}^q_{dL}(V)$, we have
		\begin{align*}
			&\sigma_{p+q+1}\circ(P\square Q)\circ\sigma_{p+q+1}^{-1}\\
			=&  \sum_{s=0}^{p+q} (-1)^{(p+q)s}  \sigma_{p+q+1}^{s+1}\circ (P\diamond Q)\circ    \sigma_{p+q+1}^{-(s+1)}\\
			=&(-1)^{(p+q)^2} P\diamond Q+\sum_{s=0}^{p+q-1}(-1)^{(p+q)s}    \sigma_{p+q+1}^{s+1}\circ (P\diamond Q)\circ    \sigma_{p+q+1}^{-(s+1)} \\
			=&(-1)^{p+q}P\diamond Q+\sum_{t=1}^{p+q} (-1)^{(p+q)(t-1)}  \sigma_{p+q+1}^{t}\circ (P\diamond Q)\circ  \sigma_{p+q+1}^{-t}\\
			=&(-1)^{p+q}\Big(P\diamond Q+ \sum_{t=1}^{p+q} (-1)^{(p+q)t}    \sigma_{p+q+1}^{t}\circ (P\diamond Q)\circ  \sigma_{p+q+1}^{-t}\Big)\\
			=&(-1)^{p+q} P\square Q.
		\end{align*}
		Hence $ P\square Q\in   \mathcal{C}_{dL}^{p+q}(V)$.
	\end{proof}
	
	\begin{example}\label{ex-dl-element}
		For $\lambda$, $\mu\in \mathcal{C}_{dL}^{1}(V)$, define the
		left extension of $\lambda$ by $    \lambda_L(a,u\otimes v):=\lambda(a,u)\otimes v$.
		Then, for all $a,b,c\in V$, we have
		\begin{align*}
			(\lambda\square \mu)(a,b,c)=\lambda_L(a,\mu(b,c))+\lambda_L^{\sigma}(b,\mu(c,a))+\lambda_L^{\sigma^2}(c,\mu(a,b)),
		\end{align*}
		where $\lambda^\sigma (a,b)= \sigma (\lambda (a,b))$.
	\end{example}
	\begin{lemma}   \label{lem-three-vertex-cyclic}
		Let $P\in\mathcal{C}_{dL}^p(V)$, $Q\in\mathcal{C}_{dL}^q(V)$ and  $R\in\mathcal{C}_{dL}^r(V)$.
		For $0\leq i\leq p+q$ and $0\leq j\leq q+r$, define
		\begin{align*}
			X_i(P,Q,R)&:=(\sigma_{p+q+1}^i(P\diamond Q)\sigma_{p+q+1}^{-i}\otimes\id^{\otimes r})\circ (\id^{\otimes(p+q)}\otimes R ),\\
			Y_j(P,Q,R)&:=(P\otimes\id^{\otimes(q+r)})\circ( \id^{\otimes p}\otimes      \sigma_{q+r+1}^{j}(Q\diamond R)\sigma_{q+r+1}^{-j}).
		\end{align*}
		Then
		\begin{align}
			X_0(P,Q,R)&=Y_0(P,Q,R),                                      \label{eq-root-1}\\
			X_i(P,Q,R)&=(-1)^{qi}\sigma_{p+q+r+1}^i (Y_{q+r+1-i}(P,Q,R)) \sigma_{p+q+r+1}^{-i}, &&1\leq i\leq q,        \label{eq-root-2}\\
			X_i(P,Q,R)&=(-1)^{p(i+q)}\sigma_{p+q+r+1}^i (X_{p+q+r+1-i}(P,R,Q))\sigma_{p+q+r+1}^{-i},        &&q+1\leq i\leq p+q,                                           \label{eq-root-3}\\
			Y_j(P,Q,R)&=(-1)^{rj}\sigma_{p+q+r+1}^{p+j} (Y_{r+1-j}(Q,P,R)) \sigma_{p+q+r+1}^{-(p+j)},       &&1\leq j\leq r.                                               \label{eq-root-4}
		\end{align}
	\end{lemma}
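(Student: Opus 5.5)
The plan is to treat every map in sight as a planar ``strand diagram'' built from $P$, $Q$, $R$, identities and block permutations, and to reduce each of \eqref{eq-root-1}--\eqref{eq-root-4} to two facts: an equality of such diagrams, and a sign produced by the cyclic invariance $\sigma_{m+1}P\sigma_{m+1}^{-1}=(-1)^mP$ of the cochains.

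\textbf{Tools.} For $n=a+b$, let $\tau_{a,b}:V^{\otimes a}\otimes V^{\otimes b}\to V^{\otimes b}\otimes V^{\otimes a}$ be the block swap. Then $\sigma_n^i=\tau_{n-i,i}$, since it moves the last $i$ factors to the front. The block swaps are natural: $\tau_{a',b'}\circ(f\otimes g)=(g\otimes f)\circ\tau_{a,b}$ for $f\in\Hom(V^{\otimes a},V^{\otimes a'})$ and $g\in\Hom(V^{\otimes b},V^{\otimes b'})$. They also compose additively in the rotation exponent. Finally, iterating the defining condition of $\mathcal{C}^q_{dL}(V)$ gives $Q=(-1)^{qk}\sigma_{q+1}^kQ\sigma_{q+1}^{-k}$ for every $k$, and similarly for $P$ and $R$.

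\textbf{Identity \eqref{eq-root-1}.} This is immediate. With $i=0$ and $j=0$, both sides equal $(P\otimes\id^{\otimes(q+r)})(\id^{\otimes p}\otimes Q\otimes\id^{\otimes r})(\id^{\otimes(p+q)}\otimes R)$, by functoriality of $\otimes$ (the interchange law).

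\textbf{Identity \eqref{eq-root-2}, $1\le i\le q$.} In $X_i$, conjugating $P\diamond Q$ by $\sigma_{p+q+1}^i$ takes the last $i$ output strands and the last $i$ input strands of $Q$ and wraps them to the front. On the right-hand side, $\sigma^{q+r+1-i}_{q+r+1}$ conjugation of $Q\diamond R$ followed by the outer conjugation by $\sigma_{p+q+r+1}^i$ should place the same strands in the same positions. The only discrepancy is that $Q$ itself sits rotated by one total turn relative to its neighbours.

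To make this precise, I will evaluate both sides on a pure tensor $a_1\otimes\cdots\otimes a_{p+q+r+1}$. I will write $Q$ applied to a rotated block as $\sigma^{\pm i}$-conjugate of $Q$, and use naturality of $\tau$ to slide $P$ and $R$ past the block swaps. After this, the two composites agree except that $Q$ appears as $\sigma_{q+1}^{i}Q\sigma_{q+1}^{-i}$ on one side. Replacing this by $(-1)^{qi}Q$ should produce exactly the sign $(-1)^{qi}$.

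\textbf{Identities \eqref{eq-root-3} and \eqref{eq-root-4}.} These go the same way. For \eqref{eq-root-3}, $q+1\le i\le p+q$, the rotation now passes through all of $Q$'s block into $P$'s block. Rewriting $P$ as a conjugate by $\sigma_{p+1}^{\,i-q}$ costs $(-1)^{p(i-q)}=(-1)^{p(i+q)}$. The roles of $Q$ and $R$ as the two vertices attached to $P$ are then interchanged, which accounts for $X_{p+q+r+1-i}(P,R,Q)$. For \eqref{eq-root-4}, the rotation by $j$ passes into $R$. Conjugating $R$ by $\sigma_{r+1}^j$ gives $(-1)^{rj}$, and an additional global rotation by $p$ carries $P$ past $Q$, yielding $Y_{r+1-j}(Q,P,R)$.

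\textbf{Main obstacle.} I expect the hardest part to be the index bookkeeping in \eqref{eq-root-3} and \eqref{eq-root-4}: checking that the exponents $p+q+r+1-i$, $r+1-j$ and the outer shift $p+j$ land exactly, modulo $p+q+r+1$, on the positions predicted by the diagram. I would first verify the small cases $p=q=r=1$ against Example \ref{ex-dl-element}, and then carry out the general computation by tracking only where each block begins.
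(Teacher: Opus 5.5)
Your proposal is correct and follows essentially the same route as the paper. Both evaluate the two sides on a pure tensor $x_1\otimes\cdots\otimes x_{p+q+r+1}$, check that the output factors land in the same positions, and get the sign from a single use of cyclic invariance: applied to $Q$ in \eqref{eq-root-2} (giving $(-1)^{qi}$), to $P$ with exponent $a=i-q$ in \eqref{eq-root-3} (giving $(-1)^{pa}=(-1)^{p(i+q)}$), and to $R$ in \eqref{eq-root-4} (giving $(-1)^{rj}$). Your block-swap and naturality language only repackages the paper's explicit index tracking; you still have to carry that tracking out, and it closes exactly as you predict.
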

	\begin{proof}
		We identify
		$x_1\otimes\cdots\otimes x_{p+q+r+1}$ with $(x_1,\ldots,x_{p+q+r+1})$.
		For a map $F:V^{\otimes N}\to V^{\otimes N}$, we have $(\sigma_N^aF\sigma_N^{-a})(z_1,\ldots,z_N)
		=   \sigma_N^a F(z_{a+1},\ldots,z_N,z_1,\ldots,z_a)$.
		Moreover, if $F\in\mathcal{C}_{dL}^f(V)$, then
		$$
		F(\sigma_{f+1}^a(z_1,\ldots,z_{f+1}))=(-1)^{fa}\sigma_{f+1}^aF(z_1,\ldots,z_{f+1}).
		$$
		Direct evaluation gives
		\begin{align*}
			X_0(P,Q,R) =&( (P\otimes \id^{\otimes q})(\id^{\otimes p}\otimes Q)\otimes \id^{\otimes r}) \circ (\id^{\otimes(p+q)}\otimes R )\\
			=&(P\otimes \id^{\otimes (q+r)})(\id^{\otimes p}\otimes (Q\otimes \id^{\otimes r})(\id^{\otimes q}\otimes R) )=Y_0(P,Q,R),
		\end{align*}
		which proves  \eqref{eq-root-1}.
		
		We suppress summation signs in the tensor components below.
		For  $1\leq i\leq q$, we set
		\begin{align*}
			&R(x_{p+q+1},\ldots,x_{p+q+r+1})=\rho_0\otimes\cdots\otimes\rho_r,\qquad Q(x_{p+i+1},\ldots,x_{p+q},\rho_0,x_1,\ldots,x_i)=\eta_0\otimes\cdots\otimes\eta_q,\\
			&P(x_{i+1},\ldots,x_{p+i},\eta_0)=\pi_0\otimes\cdots\otimes\pi_p.
		\end{align*}
		Then we obtain
		\begin{align*}
			&X_i(P,Q,R)(x_1,\ldots,x_{p+q+r+1})\\
			= &(\sigma_{p+q+1}^i(P\otimes \id^{\otimes q})(\id^{\otimes p}\otimes Q) \sigma_{p+q+1}^{-i}\otimes \id^{\otimes r})(x_1,\dots, x_{p+q}, R(x_{p+q+1},\ldots,x_{p+q+r+1})) \\
			=&\sigma_{p+q+1}^i (P\otimes \id^{\otimes q}) (x_{i+1},\ldots,x_{p+i},Q( x_{p+i+1},\ldots,x_{p+q},\rho_0,x_1,\ldots,x_i))\otimes \rho_1\otimes\cdots\otimes\rho_r\\
			=&  \eta_{q-i+1}\otimes\cdots\otimes\eta_q
			\otimes\pi_0\otimes\cdots\otimes\pi_p
			\otimes\eta_1\otimes\cdots\otimes\eta_{q-i}
			\otimes\rho_1\otimes\cdots\otimes\rho_r.
		\end{align*}
		On the other hand,  since $
		Q(x_1,\ldots,x_i,x_{p+i+1},\ldots,x_{p+q},\rho_0)=(-1)^{qi}\sigma_{q+1}^i(\eta_0,\ldots,\eta_q) $, we have
		\begin{align*}
			&(\sigma_{p+q+r+1}^iY_{q+r+1-i}(P,Q,R)\sigma_{p+q+r+1}^{-i})(x_1,\ldots,x_{p+q+r+1})\\
			=&\sigma_{p+q+r+1}^i   \Big( (P\otimes \id^{\otimes(q+r)} )(\id^{\otimes p}\otimes \sigma_{q+r+1}^{q+r+1-i}(Q\otimes \id^{\otimes r} )(\id^{\otimes q}\otimes R)) \\
			&   \qquad \qquad    (x_{i+1},\dots,x_{p+i},x_1, \dots, x_i, x_{i+p+1},\dots, x_{p+q+r+1})                 \Big)\\
			=&(-1)^{qi}\sigma_{p+q+r+1}^i   \Big( (P\otimes \id^{\otimes (q+r)} )( x_{i+1},\dots,x_{p+i}, \eta_0,\dots,\eta_{q-i},\rho_1,\dots,\rho_r,\eta_{q-i+1},\dots,\eta_q  ) \Big)\\
			=&(-1)^{qi} \eta_{q-i+1}\otimes\cdots\otimes\eta_q\otimes\pi_0\otimes\cdots\otimes\pi_p\otimes\eta_1\otimes\cdots\otimes\eta_{q-i}\otimes\rho_1\otimes\cdots\otimes\rho_r.
		\end{align*}
		Thus,   \eqref{eq-root-2} holds.
		
		Let $q+1\leq i\leq p+q$ and write $i=q+a$, where $1\leq a\leq p$. Set
		\begin{align*}
			&R(x_{p+q+1},\ldots,x_{p+q+r+1})=\rho_0\otimes\cdots\otimes\rho_r,\qquad Q(x_a,\ldots,x_{q+a})=\eta_0\otimes\cdots\otimes\eta_q,\\
			&P(x_{q+a+1},\ldots,x_{p+q},\rho_0, x_1,\ldots,x_{a-1},\eta_0)=\pi_0\otimes\cdots\otimes\pi_p.
		\end{align*}
		We have
		\begin{align*}
			&X_i(P,Q,R) ( x_1,\ldots,x_{p+q+r+1})\\
			=&  \pi_{p-a+1}\otimes\cdots\otimes\pi_p
			\otimes\eta_1\otimes\cdots\otimes\eta_q \otimes\pi_0\otimes\cdots\otimes\pi_{p-a}
			\otimes\rho_1\otimes\cdots\otimes\rho_r.
		\end{align*}
		On the other hand, since $P(x_1,\ldots,x_{a-1},\eta_0,  x_{q+a+1},\ldots,x_{p+q},\rho_0) =  (-1)^{pa}  \sigma_{p+1}^a(\pi_0,\ldots,\pi_p)$, we have
		\begin{align*}
			&(\sigma_{p+q+r+1}^iX_{p+q+r+1-i}(P,R,Q)\sigma_{p+q+r+1}^{-i} ) (x_1,\ldots,x_{p+q+r+1})\\
			=&\sigma_{p+q+r+1}^iX_{p+q+r+1-i}(P,R,Q)(x_{i+1},\dots,x_{p+q+r+1},x_1,\dots,x_i)\\
			=&\sigma_{p+q+r+1}^i  \Big(   (\sigma_{p+r+1}^{p+r+1-a} (P\otimes\id^{\otimes r}) (\id^{\otimes p}\otimes R) \sigma_{p+r+1}^{-(p+r+1-a)}\otimes\id^{\otimes q} )\\
			&\qquad \qquad  (x_{q+a+1},\ldots,x_{p+q+r+1}, x_1,\ldots,x_{a-1}, \eta_0,\eta_1,\ldots,\eta_q)   \Big)\\
			=&\sigma_{p+q+r+1}^i \Big(   (\sigma_{p+r+1}^{p+r+1-a} (P\otimes\id^{\otimes r}) (\id^{\otimes p}\otimes R) \otimes\id^{\otimes q})  (x_1,\ldots,x_{a-1}, \eta_0,x_{q+a+1},\ldots,x_{p+q+r+1}, \eta_1,\ldots,\eta_q)   \Big)\\
			=&(-1)^{pa} \sigma_{p+q+r+1}^i \Bigl( \pi_0\otimes\cdots\otimes\pi_{p-a} \otimes \rho_1\otimes\cdots\otimes\rho_r  \otimes\pi_{p-a+1}\otimes\cdots\otimes\pi_p \otimes\eta_1\otimes\cdots\otimes\eta_q\Bigr)\\
			=&  (-1)^{pa} \pi_{p-a+1}\otimes\cdots\otimes\pi_p \otimes \eta_1\otimes\cdots\otimes\eta_q \otimes \pi_0\otimes\cdots\otimes\pi_{p-a}\otimes \rho_1\otimes\cdots\otimes\rho_r\\
			=&  (-1)^{p(i+q)}X_i(P,Q,R)(x_1,\ldots,x_{p+q+r+1}).
		\end{align*}
		Thus, \eqref{eq-root-3} holds.

		Finally, for $1\leq j\leq r$, we  write
		\begin{align*}
			&R(x_{p+q+j+1},\ldots,x_{p+q+r+1},x_{p+1},\ldots,x_{p+j})=\rho_0\otimes\cdots\otimes\rho_r,\qquad
			Q(x_{p+j+1},\ldots,x_{p+j+q},\rho_0)=\eta_0\otimes\cdots\otimes\eta_q,\\
			&P(x_1,\ldots,x_p,\rho_{r-j+1})=\pi_0\otimes\cdots\otimes\pi_p.
		\end{align*}
		Then
		\begin{align*}
			&Y_j(P,Q,R)(x_1,\ldots,x_{p+q+r+1})\\
			=&\pi_0\otimes\cdots\otimes\pi_p\otimes\rho_{r-j+2}\otimes\cdots\otimes\rho_r\otimes\eta_0\otimes\cdots\otimes\eta_q\otimes\rho_1\otimes\cdots\otimes\rho_{r-j}.
		\end{align*}
		On the other hand, since $      R(x_{p+1},\ldots,x_{p+j},x_{p+q+j+1},\ldots,x_{p+q+r+1}) =(-1)^{rj}\sigma_{r+1}^j(\rho_0,\ldots,\rho_r)$,
		we have
		\begin{align*}
			&(  \sigma_{p+q+r+1}^{p+j}Y_{r+1-j}(Q,P,R)\sigma_{p+q+r+1}^{-(p+j)})(x_1,\ldots,x_{p+q+r+1})\\
			=&\sigma_{p+q+r+1}^{p+j}
			\Bigl(      (Q\otimes\id^{\otimes(p+r)}) (  \id^{\otimes q}\otimes\sigma_{p+r+1}^{r+1-j}
			(P\otimes\id^{\otimes r})   (\id^{\otimes p}\otimes R)  \sigma_{p+r+1}^{-(r+1-j)})
			(x_{p+j+1},\ldots,x_{p+q+r+1},x_1,\ldots,x_{p+j})\Bigr)\\
			=&(-1)^{rj} \sigma_{p+q+r+1}^{p+j}
			\Bigl(  \eta_0\otimes\cdots\otimes\eta_q\otimes\rho_1\otimes\cdots\otimes\rho_{r-j}\otimes\pi_0\otimes\cdots\otimes\pi_p
			\otimes\rho_{r-j+2}\otimes\cdots\otimes\rho_r\Bigr)\\
			=&(-1)^{rj} \pi_0\otimes\cdots\otimes\pi_p      \otimes\rho_{r-j+2}\otimes\cdots\otimes\rho_r\otimes\eta_0\otimes\cdots\otimes\eta_q\otimes\rho_1\otimes\cdots\otimes\rho_{r-j}\\
			=&(-1)^{rj}     Y_j(P,Q,R)(x_1,\ldots,x_{p+q+r+1}).
		\end{align*}
		Thus,   \eqref{eq-root-4} holds and then the proof is complete.
	\end{proof}

	\begin{theorem}\label{th-dgla}
		The bracket $[\cdot,\cdot]_{dL}$ defined by
		\begin{equation}\label{eq-dgla}
			[P,Q]_{dL}:=P\square Q-(-1)^{pq}Q\square P,\qquad P\in  \mathcal{C}^p_{dL}(V),\quad Q\in \mathcal{C}^q_{dL}(V),
		\end{equation}
		endows $\mathcal{C}_ {dL}^\bullet(V)$ with the structure
		of a graded Lie algebra. Explicitly, for $P\in  \mathcal{C}^p_{dL}(V)$, $Q\in \mathcal{C}^q_{dL}(V)$ and  $R\in\mathcal{C}_{dL}^r(V)$, we have
		\begin{align}
			\label{eq-dgla-ss}&[P,Q]_{dL}   =-(-1)^{pq}[Q,P]_{dL},\\
			\label{eq-dgla-jcb}     &[P,[Q,R]_{dL}]_{dL}=[[P,Q]_{dL},R]_{dL}+(-1)^{pq}[Q,[P,R]_{dL}]_{dL}.
		\end{align}
	\end{theorem}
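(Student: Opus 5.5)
Graded skew-symmetry \eqref{eq-dgla-ss} is immediate from the definition \eqref{eq-dgla}: $[Q,P]_{dL}=Q\square P-(-1)^{pq}P\square Q=-(-1)^{pq}[P,Q]_{dL}$. The Lemma preceding Example \ref{ex-dl-element} shows that the bracket lands in $\mathcal{C}^{p+q}_{dL}(V)$, so the only real content is the graded Jacobi identity. For this I would follow the standard Gerstenhaber-type strategy and prove that the $\square$-product is \emph{graded right pre-Lie}. That is, the associator $A(P,Q,R):=(P\square Q)\square R-P\square(Q\square R)$ should satisfy
\begin{equation*}
A(P,Q,R)=(-1)^{qr}A(P,R,Q).
\end{equation*}
It is a well-known formal consequence that the graded commutator of a graded (right) pre-Lie product satisfies \eqref{eq-dgla-jcb}. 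One expands both sides into the six associators with the appropriate Koszul signs and checks that they cancel pairwise. This step is routine, so I would not grind it out beyond recording the sign bookkeeping.

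To prove the pre-Lie identity I would expand both iterated products into the building blocks $X_i$ and $Y_j$ of Lemma \ref{lem-three-vertex-cyclic}. Set $N=p+q+r+1$. Since $P\square Q$ is a signed sum of $\sigma^s(P\diamond Q)\sigma^{-s}$, the definition gives
\begin{equation*}
(P\square Q)\square R=\sum_{t=0}^{N-1}\sum_{i=0}^{p+q}(-1)^{(p+q+r)t+(p+q)i}\,\sigma_N^t X_i(P,Q,R)\sigma_N^{-t}.
\end{equation*}
In the same way,
\begin{equation*}
P\square(Q\square R)=\sum_{t}\sum_{j=0}^{q+r}(-1)^{(p+q+r)t+(q+r)j}\,\sigma_N^tY_j(P,Q,R)\sigma_N^{-t}.
\end{equation*}
Both are "cyclic averages", so it suffices to compare the inner sums modulo conjugation by $\sigma_N$. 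Conjugation by $\sigma_N^k$ inside a cyclic average just contributes the sign $(-1)^{(p+q+r)k}$ after reindexing $t$.

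Now I would split the index ranges according to Lemma \ref{lem-three-vertex-cyclic}.
\begin{itemize}
\item By \eqref{eq-root-1}, the $i=0$ term of the first sum cancels the $j=0$ term of the second.
\item By \eqref{eq-root-2}, the terms $1\le i\le q$ of the first sum match the terms $j=q+r+1-i\in\{r+1,\dots,q+r\}$ of the second, up to conjugation. These cancel as well, and I need to verify that the signs $(-1)^{qi}$, $(-1)^{(p+q)i}$, $(-1)^{(q+r)j}$ and the conjugation sign $(-1)^{(p+q+r)i}$ combine correctly.
\item What survives in the associator is the range $q+1\le i\le p+q$ of $X$ (these are "$Q$ and $R$ both grafted onto $P$"), together with $-\sum_{j=1}^{r}Y_j$. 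By \eqref{eq-root-3}, the $X$-terms for $(P,Q,R)$ are identified with $X$-terms for $(P,R,Q)$ with indices in $\{r+1,\dots,p+r\}$. By \eqref{eq-root-4}, the $Y_j$ with $1\le j\le r$ for $(P,Q,R)$ are identified with $Y$-terms of $(Q,P,R)$, which correspond to the $X$-terms with $1\le i\le r$ of $(P,R,Q)$ via \eqref{eq-root-2}.
\end{itemize}
The conclusion would be that the surviving part of $A(P,Q,R)$ is the cyclic average of "both $Q$ and $R$ attached to $P$" configurations. This expression is manifestly symmetric under $Q\leftrightarrow R$ up to the sign $(-1)^{qr}$.

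The main obstacle is the sign verification in this last matching. Each identification from Lemma \ref{lem-three-vertex-cyclic} introduces a cyclic-symmetry sign and a conjugation shift, and these must combine with the Koszul signs $(-1)^{(p+q)i}$ and $(-1)^{(p+q+r)t}$ to give exactly $(-1)^{qr}$. I would first check it in low degrees, for instance $p=q=r=1$ using Example \ref{ex-dl-element}, to fix conventions. Then I would do the general parity computation term by term, reducing every exponent mod $2$ and using $(p+q+r)k$ for the conjugation shift.
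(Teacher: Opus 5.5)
\textbf{There is a genuine gap.} The identity you set out to prove, $\mathfrak A(P,Q,R)=(-1)^{qr}\mathfrak A(P,R,Q)$ (write $\mathfrak A$ for your associator), is false. So $\square$ is not graded right pre-Lie, and the ``pre-Lie implies Lie'' lemma cannot be invoked.

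Here is a counterexample. Take $h,k\in\mathcal{C}^0_{dL}(V)=\en(V)$ and a nonzero $R\in\mathcal{C}^1_{dL}(V)$; for instance $R=\phi\otimes\psi-\psi\otimes\phi$ with $\phi,\psi$ linearly independent, which exists once $\dim V\ge 2$. Cyclicity of $R$ gives $h\square k=hk$, $h\square R=D_hR$ and $R\square k=RD_k$, where $D_h=h\otimes\id+\id\otimes h$. Both $D_hR$ and $RD_k$ again lie in $\mathcal{C}^1_{dL}(V)$. Hence
\[
\mathfrak A(h,R,k)=D_hRD_k-D_hRD_k=0,\qquad \mathfrak A(h,k,R)=(D_{hk}-D_hD_k)R=-(h\otimes k+k\otimes h)R,
\]
and for $h=k=\id$ the second equals $-2R\neq 0$.

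The error sits in your last bullet. The surviving terms $Y_j(P,Q,R)$ with $1\le j\le r$ are the configurations in which one output of $R$ feeds $Q$ and another output of $R$ feeds $P$. This is a different shape from ``$Q$ and $R$ both feed $P$''. Equation \eqref{eq-root-4} does identify these terms with $Y$-terms of $(Q,P,R)$. However, \eqref{eq-root-2} only relates $X_i$ and $Y_{q+r+1-i}$ of the \emph{same} ordered triple, namely the chain configurations you already cancelled. It cannot turn $Y(Q,P,R)$ into $X$-terms of $(P,R,Q)$. Your claim that the surviving part is manifestly $(Q\leftrightarrow R)$-symmetric is therefore wrong.

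What is true, and what the paper proves, is a splitting $\mathfrak A=\mathfrak A_X+\mathfrak A_Y$ with two different symmetries:
\begin{itemize}
\item $\mathfrak A_X(P,Q,R)=(-1)^{qr}\mathfrak A_X(P,R,Q)$, from \eqref{eq-root-3};
\item $\mathfrak A_Y(P,Q,R)=(-1)^{pq}\mathfrak A_Y(Q,P,R)$, from \eqref{eq-root-4}.
\end{itemize}
So $\square$ is only Lie-admissible. To finish, expand the Jacobiator $[P,[Q,R]_{dL}]_{dL}-[[P,Q]_{dL},R]_{dL}-(-1)^{pq}[Q,[P,R]_{dL}]_{dL}$ as
\begin{align*}
&-\mathfrak A(P,Q,R)+(-1)^{qr}\mathfrak A(P,R,Q)-(-1)^{(p+q)r}\mathfrak A(R,P,Q)\\
&+(-1)^{pq}\mathfrak A(Q,P,R)-(-1)^{p(q+r)}\mathfrak A(Q,R,P)+(-1)^{pq+(p+q)r}\mathfrak A(R,Q,P).
\end{align*}
Then check two cancellations:
\begin{itemize}
\item the $X$-parts cancel in the pairs differing by a swap of the last two arguments;
\item the $Y$-parts cancel in the pairs differing by a swap of the first two arguments.
\end{itemize}
The rest of your proposal is correct and coincides with the paper. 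This covers skew-symmetry, the expansion of both iterated products into $X_i$ and $Y_j$, and the cancellations via \eqref{eq-root-1} and \eqref{eq-root-2}.
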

	\begin{proof}
		Let $P\in  \mathcal{C}^p_{dL}(V)$, $Q\in \mathcal{C}^q_{dL}(V)$ and  $R\in\mathcal{C}_{dL}^r(V)$.
		Then we have
		\begin{align*}
			[P,Q]_{dL}=P\square Q-(-1)^{pq}Q\square P=-(-1)^{pq}(Q\square P- (-1)^{pq}P\square Q)   =-(-1)^{pq}[Q,P]_{dL}.
		\end{align*}
		
		It remains to prove   \eqref{eq-dgla-jcb}. For any
		$T\in\operatorname{End}(V^{\otimes(p+q+r+1)})$,
		write
		$$
		\operatorname{Cyc}_{p+q+r}(T):= \sum_{s=0}^{p+q+r}(-1)^{(p+q+r)s}   \sigma_{p+q+r+1}^{s}\circ T\circ    \sigma_{p+q+r+1}^{-s}.
		$$
		By a change of indices, for every integer $i$, one has
		\begin{equation}\label{eq-cyc-conjugation}
			\operatorname{Cyc}_{p+q+r}
			\left(
			\sigma_{p+q+r+1}^{i}
			T
			\sigma_{p+q+r+1}^{-i}
			\right)
			=
			(-1)^{(p+q+r)i}
			\operatorname{Cyc}_{p+q+r}(T).
		\end{equation}

		Define the associator of $\square$ by $ \mathfrak A(P,Q,R):=(P\square Q)\square R-P\square(Q\square R)$.
		Using the definitions of $X_i(P,Q,R)$ and $Y_j(P,Q,R)$ in
		Lemma \ref{lem-three-vertex-cyclic}, a direct expansion gives
		\begin{align}
			\mathfrak A(P,Q,R)= \operatorname{Cyc}_{p+q+r} \Big(\sum_{i=0}^{p+q}(-1)^{(p+q)i}X_i(P,Q,R)-\sum_{j=0}^{q+r}
			(-1)^{(q+r)j}Y_j(P,Q,R) \Big).
			\label{eq-associator-XY}
		\end{align}
		
		Now let $1\leq i\leq q$ and put $j:=q+r+1-i$.
		By \eqref{eq-root-2} and \eqref{eq-cyc-conjugation},
		\begin{align*}
			\operatorname{Cyc}_{p+q+r}  \left(      (-1)^{(p+q)i}X_i(P,Q,R) \right)
			=&(-1)^{(q+r)i} \operatorname{Cyc}_{p+q+r}\left(    Y_j(P,Q,R)\right)\\
			=&  \operatorname{Cyc}_{p+q+r}\left((-1)^{(q+r)j}Y_j(P,Q,R) \right).
		\end{align*}
		Thus all the terms with $1\leq i\leq q$ cancel with the terms
		$r+1\leq j\leq q+r$.
		
		Consequently, by \eqref{eq-root-1}, $\mathfrak A(P,Q,R)=    \mathfrak A_X(P,Q,R)+\mathfrak A_Y(P,Q,R)$,
		where
		\begin{align}
			&\mathfrak A_X(P,Q,R):=\operatorname{Cyc}_{p+q+r}    (      \sum_{i=q+1}^{p+q}  (-1)^{(p+q)i}X_i(P,Q,R) ),
			\label{eq-AX-definition}\\
			&\mathfrak A_Y(P,Q,R):= -\operatorname{Cyc}_{p+q+r}(        \sum_{j=1}^{r}      (-1)^{(q+r)j}Y_j(P,Q,R)).
			\label{eq-AY-definition}
		\end{align}
		
		Let $q+1\leq i\leq p+q$ and put $k:=p+q+r+1-i$.
		Since $ (p+q)i+p(i+q)+(p+q+r)i\equiv qr+(p+r)k  \pmod 2$,  by \eqref{eq-root-3} and \eqref{eq-cyc-conjugation}, we have
		\begin{align*}
			\operatorname{Cyc}_{p+q+r}  \left( (-1)^{(p+q)i}X_i(P,Q,R)\right) = (-1)^{qr}       \operatorname{Cyc}_{p+q+r}\left(        (-1)^{(p+r)k}X_k(P,R,Q)\right).
		\end{align*}
		Thus, $ \mathfrak A_X(P,Q,R)=   (-1)^{qr}\mathfrak A_X(P,R,Q)$.

		Similarly, let $1\leq j\leq r$ and put $l:=r+1-j$.
		By $(q+r)j+rj+(p+q+r)(p+j)  \equiv  pq+(p+r)l   \pmod 2$, \eqref{eq-root-4} and \eqref{eq-cyc-conjugation}, we have
		\begin{align*}
			-\operatorname{Cyc}_{p+q+r}\left(       (-1)^{(q+r)j}Y_j(P,Q,R)\right)
			=       (-1)^{pq}   \left(-     \operatorname{Cyc}_{p+q+r}\left(        (-1)^{(p+r)l}Y_l(Q,P,R) \right)     \right).
		\end{align*}
		Thus, $     \mathfrak A_Y(P,Q,R)=       (-1)^{pq}\mathfrak A_Y(Q,P,R)$.
		
		We now calculate \eqref{eq-dgla-jcb}. Since
		\begin{align*}
			&(P\square Q)\square R =    \operatorname{Cyc}_{p+q+r} ( \sum_{s=0}^{p+q}(-1)^{(p+q)s}X_s(P,Q,R)),\\
			&P\square (Q\square R)=\operatorname{Cyc}_{p+q+r} ( \sum_{k=0}^{q+r}(-1)^{(q+r)k}Y_k(P,Q,R)),
		\end{align*}
		expanding the definition of
		$[\cdot,\cdot]_{dL}$ and collecting terms into associators, we get
		\begin{align*}
			&[P,[Q,R]_{dL}]_{dL}-[[P,Q]_{dL},R]_{dL}-(-1)^{pq}[Q,[P,R]_{dL}]_{dL}\\
			=&-\mathfrak{A}(P,Q,R)+(-1)^{qr}\mathfrak{A}(P,R,Q)-(-1)^{(p+q)r}\mathfrak{A}(R,P,Q)\\
			&+(-1)^{pq}\mathfrak{A}(Q,P,R)-(-1)^{p(q+r)}\mathfrak{A}(Q,R,P)+(-1)^{pq+(p+q)r}\mathfrak{A}(R,Q,P)\\
			=&-\mathfrak{A}_X(P,Q,R)-\mathfrak{A}_Y(P,Q,R)+(-1)^{qr}(\mathfrak{A}_X(P,R,Q) +\mathfrak{A}_Y(P,R,Q)  )\\
			&-(-1)^{(p+q)r}(\mathfrak{A}_X(R,P,Q)+\mathfrak{A}_Y(R,P,Q) )+(-1)^{pq}(\mathfrak{A}_X(Q,P,R) +\mathfrak{A}_Y(Q,P,R))\\
			&-(-1)^{p(q+r)}(\mathfrak{A}_X(Q,R,P)+\mathfrak{A}_Y(Q,R,P)      )+(-1)^{pq+(p+q)r}(\mathfrak{A}_X(R,Q,P)+\mathfrak{A}_Y(R,Q,P))\\
			=&0.
		\end{align*}
		
		Therefore \eqref{eq-dgla-jcb} holds  and then $(\mathcal{C}_{dL}^\bullet(V), [\cdot,\cdot]_{dL})$ is a graded Lie algebra.
	\end{proof}

	\begin{corollary}\label{cor-mc-dla}
		There is a bijective correspondence between double Lie brackets $\lc\cdot,\cdot \rc:V\otimes V\rightarrow V\otimes V$ on a vector space $V$ and Maurer-Cartan elements $\mu\in\mathcal{C}_{dL}^1(V)$. Explicitly, the correspondence is  given by
		\begin{equation*}
			\lc a,b\rc=\mu(a,b),\qquad a,b\in V,
		\end{equation*}
		and the Maurer-Cartan equation is equivalently $[\mu,\mu]_{dL}=0$.
	\end{corollary}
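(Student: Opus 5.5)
The plan is to match the two defining conditions of a double Lie bracket with membership in $\mathcal{C}^1_{dL}(V)$ and with the vanishing of $\mu\square\mu$.

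\emph{Skew-symmetry is exactly membership in $\mathcal{C}^1_{dL}(V)$.} A linear map $\mu:V\otimes V\to V\otimes V$ lies in $\mathcal{C}^1_{dL}(V)$ if and only if $\sigma_2\mu\sigma_2^{-1}=-\mu$. Evaluating on $b\otimes a$ and using $\sigma_2^{-1}=\sigma_2$ on $V^{\otimes 2}$, this reads $\sigma(\mu(a,b))=-\mu(b,a)$. Applying $\sigma$ once more, this is equivalent to $\mu(a,b)=-\mu(b,a)^\sigma$, which is the \eqref{skewsymmetry} axiom. Hence the assignment $\lc a,b\rc=\mu(a,b)$ is a bijection between skew-symmetric double brackets on $V$ and degree $1$ elements of $\mathcal{C}^\bullet_{dL}(V)$.

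\emph{The Maurer-Cartan equation.} The differential of the graded Lie algebra of Theorem \ref{th-dgla} is zero, so the Maurer-Cartan equation is $\frac12[\mu,\mu]_{dL}=0$, equivalently $[\mu,\mu]_{dL}=0$ since $\operatorname{char}\bfk=0$. By \eqref{eq-dgla} with $p=q=1$,
$$[\mu,\mu]_{dL}=\mu\square\mu-(-1)^{1}\mu\square\mu=2\,\mu\square\mu .$$
So the Maurer-Cartan condition is equivalent to $\mu\square\mu=0$.

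\emph{Comparison with the Jacobi identity.} Apply Example \ref{ex-dl-element} with $\lambda=\mu$. For all $a,b,c\in V$ it gives
$$(\mu\square\mu)(a,b,c)=\mu_L(a,\mu(b,c))+\mu_L^{\sigma}(b,\mu(c,a))+\mu_L^{\sigma^2}(c,\mu(a,b)).$$
With $\mu=\lc\cdot,\cdot\rc$, the three terms are $\lc a,\lc b,c\rc\rc_L$, $\lc b,\lc c,a\rc\rc_L^\sigma$ and $\lc c,\lc a,b\rc\rc_L^{\sigma^2}$. Their sum is exactly the left side of the \eqref{Jacobi identity}. Hence $\mu\square\mu=0$ if and only if the double Jacobi identity holds for all $a,b,c$.

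Combining the three steps gives the bijective correspondence, with inverse $\mu\mapsto\lc\cdot,\cdot\rc:=\mu$. The only point needing care is the sign and index convention in Example \ref{ex-dl-element}. One must check that the conjugations $\sigma_3^s(\mu\diamond\mu)\sigma_3^{-s}$, with signs $(-1)^{2s}=1$, really produce the three cyclic terms in the stated form. If the paper's formula is granted, this is immediate. Otherwise it is a direct three-term evaluation on $a\otimes b\otimes c$, using the rule $(\sigma^aF\sigma^{-a})(z)=\sigma^aF(\sigma^{-a}z)$ recorded in the proof of Lemma \ref{lem-three-vertex-cyclic}.
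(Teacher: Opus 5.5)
Your proof is correct and takes essentially the paper's approach. Like the paper, you identify the skew-symmetry axiom with the condition $\sigma_2\mu\sigma_2^{-1}=-\mu$, use $[\mu,\mu]_{dL}=2\,\mu\square\mu$, and read off the double Jacobi identity from Example \ref{ex-dl-element} with $\lambda=\mu$; your explicit evaluation on $b\otimes a$ and your check of the cyclic terms simply spell out steps the paper leaves implicit.
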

	\begin{proof}
		By definition, $\mu \in \mathcal{C}^1_{dL}(V)$ implies that $\sigma_2\mu\sigma_2^{-1}=-\mu$.
		Thus $\mu \in \mathcal{C}^1_{dL}(V)$ if and only if $\mu(a,b)=-\mu^{\sigma_2}(b,a)$.
		Finally, by Example \ref{ex-dl-element}, we have
		\begin{equation*}
			\frac{1}{2}[\mu,\mu]_{dL}(a,b,c) =\mu_L(a,\mu(b,c))+\mu_L(b,\mu(c,a))^\sigma+\mu_L(c,\mu(a,b))^{\sigma^2},\qquad a,b,c\in V.
		\end{equation*}
		Therefore, the  skew-symmetry of $\mu$ is equivalent to
		$\mu\in\mathcal{C}_{dL}^{1}(V)$, while the Jacobi identity  is
		equivalent to $\mu\square\mu=0$. Since
		$[\mu,\mu]_{dL}=2\mu\square\mu$, the double Lie algebra structures on
		$V$ are precisely the Maurer-Cartan elements of
		$\mathcal{C}_{dL}^{\bullet}(V)$. 
	\end{proof}
	
	Let $\mu\in\mathcal{C}_{dL}^{1}(V)$ be a double Lie bracket.  Since
	$[\mu,\mu]_{dL}=0$, the graded Jacobi identity gives
	\begin{equation*}
		[\mu,[\mu,P]_{dL}]_{dL}
		=\frac12[[\mu,\mu]_{dL},P]_{dL}=0,
		\qquad P\in\mathcal{C}_{dL}^{\bullet}(V).
	\end{equation*}
	Thus $d_\mu:=[\mu,\cdot]_{dL}$ is a differential of degree one, and  $(\mathcal{C}_{dL}^\bullet (V),[\cdot,\cdot]_{dL},d_\mu)$ is a dg Lie algebra.
	
	\begin{definition}\label{def-dL-cochain}
		Let $(V,\lc\cdot,\cdot\rc)$ be a double Lie algebra, and let
		$\mu\in\mathcal{C}_{dL}^{1}(V)$ denote its double bracket $\lc\cdot,\cdot\rc$.  The
		\textbf{double Lie algebra cochain complex} is $(\mathcal{C}_{dL}^{\bullet}(V),d_\mu )$.
		For $P\in\mathcal{C}_{dL}^{p}(V)$, the differential is explicitly
		\begin{equation*}
			d_\mu (P):=\sum_{s=0}^{p+1}(-1)^{(p+1)s}\sigma_{p+2}^s\circ(\mu\diamond P)\circ \sigma_{p+2}^{-s}
			-(-1)^p \sum_{s=0}^{p+1}(-1)^{(p+1)s}\sigma_{p+2}^s\circ(P\diamond \mu)\circ \sigma_{p+2}^{-s}.
		\end{equation*}
		Its cohomology is denoted by $H_{dL}^\bullet(V,\mu)=\bigoplus_{p\geq 0}H_{dL}^p(V,\mu)$, where $H_{dL}^p(V,\mu)$ is defined by
		$$
		H_{dL}^p(V,\mu):= \frac{\ker (d_\mu: \mathcal{C}_{dL}^p(V)\rightarrow \mathcal{C}_{dL}^{p+1} (V)) } { \mathrm{im} (d_\mu: \mathcal{C}_{dL}^{p-1}(V)\rightarrow \mathcal{C}_{dL}^{p} (V))}.
		$$
		Here and throughout, we adopt the convention
		$\mathcal{C}_{dL}^{-1}(V):=0$. Hence, when $p=0$, the denominator is zero and
		$H_{dL}^{0}(V,\mu)= \ker (  d_\mu:\mathcal{C}_{dL}^{0}(V) \longrightarrow       \mathcal{C}_{dL}^{1}(V) )$.
	\end{definition}

	\begin{remark}
		For $n\geq 1$, let $\mathrm{BR}_{dLie}^n (V)$ denote the space of cyclically skew-symmetric $n$-ary maps used in  \cite[Section 4.1.4]{FV25}, and let $\widehat{d}$ denote the differential given by formula (4.3) in  \cite{FV25}.
		Then    $ \mathcal{C}_{dL}^{p}(V)=\mathrm{BR}_{\rm dLie}^{p+1}(V), p\geq0$.
		For $P\in \mathcal{C}_{dL}^p (V)$, that formula gives
		\begin{equation*}
			\widehat{d}(P)=[P,\mu]_{dL},
			\qquad          d_\mu(P)=[\mu,P]_{dL}=(-1)^{p+1}\widehat d(P).
		\end{equation*}
		Consequently, the maps $  \varphi_p: \mathcal{C}_{\mathrm{dL}}^p(V) \rightarrow   \ \mathrm{BR}^{p+1}_{\rm dLie}(V)$ defined by    $\varphi_p(P)=  (-1)^{\frac{p(p+1)}{2}}P$,  satisfy $   \varphi_{p+1}\circ d_\mu    =   \widehat d\circ\varphi_p$.
		Thus our complex is isomorphic to the positive arity part $\bigoplus_{n\geq 1}\mathrm{BR}_{\rm dLie}^n(V)$ of $\mathrm{BR}_{\rm dLie}(V)$ in  \cite{FV25}, with its degrees reindexed by $n=p+1$. This statement concerns the positive arity part and does not identify any additional arity-zero term or the coboundaries contributed by that term.
		
	\end{remark}

	\subsection{A properadic interpretation  of the cochain complex and cohomology of double Lie algebras}\label{subsec-properadic-interpretation}
	Throughout this subsection, $V$ is a vector space, regarded as a chain complex concentrated in degree $0$. Unless explicitly stated otherwise, we use cohomological grading, so that differentials have degree $1$. We follow the terminology of  \cite{Leray20,MV09}. 
	
	Let $\ch_\bfk$ denote the category of $\mathbb{Z}$-graded chain complexes over $\bfk$, and   $\Bij$ denote the groupoid of finite sets and bijections. This gives us the equivalence of categories $\Bij\cong \Bij^{op}$ by passage to the inverse.
	\begin{definition}
		A (right) $\fraks$-module is an object of $\Func(\Bij^{op},\ch_\bfk)$, the category of contravariant functors from $\Bij$ to $\ch_\bfk$, denoted by $\smc$. An $\fraks$-bimodule is an object of the category $\Func(\Bij\times \Bij^{op},\ch_\bfk)$ which is denoted by $\sbmc$.
		
		An $\fraks$-module (resp. $\fraks$-bimodule) $\mathsf {P}$ is called \textbf{reduced}  if it satisfies $\mathsf{P}(\emptyset)=0$ (resp. $\mathsf{P}(\emptyset,S)=0$ and $\mathsf{P}(S,\emptyset)=0$ for every finite set $S$).
	\end{definition}
	
	We write $\boxtimes_c$ for the connected composition product of reduced $\fraks$-modules and $\boxtimes_c^{\rm Val}$ for the connected composition product of reduced $\fraks $-bimodules. Their units are supported in arity $1$ and biarity $(1,1)$, respectively, with value $\bfk$.
	
	\begin{definition}
		
		A \textbf{protoperad} is a unital monoid  in the monoidal category ($\smc^{red}$, $\boxtimes_c$), and  a \textbf{properad} is a unital monoid in the monoidal category $(\sbmc^{\mathrm{red}},\boxtimes_c^{\mathrm{Val}})$ of   reduced $\fraks$-bimodules.  In particular, a properad  $\mathsf{P}$  is equipped with maps
		\begin{equation*}
			\gamma_{\mathsf{P}}:\mathsf{P}\boxtimes_c^{\rm Val} \mathsf{P}\longrightarrow \mathsf{P}, \qquad \eta_{\mathsf{P}}:I_{\boxtimes^{\rm Val}}\rightarrow \mathsf{P},
		\end{equation*}
		satisfying associativity and unit   relations, where $I_{\boxtimes^{\rm Val}}=\{  I_{\boxtimes^{\rm Val} } (m,n)\} $ is an $\fraks $-bimodule with all components $I_{\boxtimes^{\rm Val}}(m,n)$ vanishing except for $I_{\boxtimes^{\rm Val}}(1,1)$ which equals $\bfk$.  Coprotoperads and coproperads are defined dually as counital comonoids in the corresponding monoidal categories.
		A properad $\mathsf{P}$ is said to be \textbf{augmented} if there exists a properad morphism $\epsilon:\mathsf{P}\rightarrow I_{\boxtimes^{\rm Val}}$. We denote by $\overline{\mathsf{P}}$ the kernel of the augmentation $\epsilon$ and call it the \textbf{augmentation ideal}.
		A coproperad $\mathsf{C}$ is said to be \textbf{coaugmented} if there is a coproperad morphism $\eta: I_{\boxtimes^{\rm Val}}\rightarrow \mathsf{C}$ such that $\epsilon\circ\eta=\id_{I_{\boxtimes^{\rm Val}}}$. The cokernel of $\eta$ is denoted by $\overline{\C}$ and $\mathsf{C}\cong I_{\boxtimes^{\rm Val}}\oplus \overline{\C}$.
	\end{definition}
	Since $\epsilon\circ \eta=\id_{I_{\boxtimes^{\rm Val}}}$, there is a canonical isomorphism 
	$\operatorname{coker}\eta \xrightarrow{\cong} \ker \epsilon$.

	\begin{example}
		For a vector space $V$, the reduced endomorphism properad of $V$ is
		$$
		\en_V(m,n):=  \begin{cases}
			\Hom(V^{\otimes n},V^{\otimes m}), & m,n\geq 1,\\
			0,&m=0 \text{ or } n=0.
		\end{cases}
		$$
		Its composition is given by composition and tensor products of linear maps.
		For $\sigma\in\fraks_r$, we also write $\sigma$ for the
		induced permutation operator on $V^{\otimes r}$, defined by
		$$
		\sigma(v_1\otimes\cdots\otimes v_r)  :=  v_{\sigma^{-1}(1)}\otimes\cdots\otimes v_{\sigma^{-1}(r)}.
		$$
		With this convention, the symmetric-group actions on $\en_V$ are
		$$
		(\sigma,\tau)\cdot F :=\sigma\circ F\circ\tau^{-1}, \qquad  (\sigma,\tau)\in\fraks_m\times\fraks_n,\quad F\in\en_V(m,n).
		$$
		
	\end{example} 
	\begin{definition}
		Let $\mathsf{P}$ be a properad. A $\mathsf{P}$-algebra
		structure on $V$ is a properad morphism $\mathsf{P}\rightarrow
		\en_V$.
	\end{definition}
	For graded vector spaces $A$ and $B$, we write $\Hom^p(A,B):=\prod_{i\in \mathbb{Z}}\Hom(A^i,B^{i+p})$. Thus  $\Hom^p(A,B)$ consists of all homogeneous linear maps of degree $p$. These maps are not required to commute with the differentials. For reduced dg $\fraks$-bimodules $\mathsf{A}$ and $\mathsf{B}$, denote
	\begin{equation}\label{eq-convolution-total-degree}
		\mathsf{Hom}^p(\mathsf{A},\mathsf B):= \bigoplus_{m,n\geq1}
		\prod_{i\in\mathbb Z}   \Hom_\bfk   (   \mathsf{A}^i(m,n),  \mathsf{B}^{i+p}(m,n) ), \qquad p\in\mathbb Z,
	\end{equation}
	and set
	$ \mathsf{Hom}^\bullet(\mathsf A,\mathsf B):=\bigoplus_{p\in\mathbb Z} \mathsf{Hom}^p(\mathsf A,\mathsf B)$.
	Thus $ \mathsf{Hom}^p(\mathsf A,\mathsf B)$ consists of biarity-preserving maps of degree $p$  with finite biarity support.
	Let
	\begin{equation}\label{eq-equivariant-convolution-total-degree}
		\mathsf{Hom}_{\fraks}^p(\mathsf A,\mathsf B):=\bigoplus_{m,n\geq1}\prod_{i\in\mathbb Z}
		\Hom_{\fraks_m\times\fraks_n} ( \mathsf A^i(m,n), \mathsf B^{i+p}(m,n)),
	\end{equation}
	and $ \mathsf{Hom}_{\fraks}^\bullet(\mathsf A,\mathsf B):=\bigoplus_{p\in\mathbb Z} \mathsf{Hom}_{\fraks}^p(\mathsf A,\mathsf B)$.
	
	
	\begin{proposition} \cite[Proposition 11, Theorem 13]{MV09}
		Let $\mathsf{C}$ be a (reduced) dg coproperad and $\mathsf{P}$ be a (reduced) dg properad.  Then $( \mathsf{Hom}^\bullet(\mathsf{C},\mathsf{P}), [\cdot,\cdot]_{\rm conv},\partial )$ is a  dg Lie algebra, where the differential $\partial$ and the bracket $ [\cdot,\cdot]_{\rm conv}$ are defined by
		\begin{align}
			&\partial f:=d_{\mathsf P}\circ f-  (-1)^p f\circ d_{\mathsf C},    \label{eq-convolution-differential}\\
			&[f,g]_{\mathrm{conv}}:=f\star g-(-1)^{pq}g\star f,    \qquad f\in \mathsf{Hom}^p(\mathsf C,\mathsf P),~g\in \mathsf{Hom}^q(\mathsf C,\mathsf P), \label{eq-convolution-Lie-bracket}
		\end{align}
		where $f\star g=\gamma_{\mathsf P}  \circ\bigl(f\boxtimes_{(1,1)}^{\mathrm{Val}}g\bigr) \circ\Delta_{(1,1)} $.
		Here $\Delta_{(1,1)}$ is the infinitesimal coproduct of  $\mathsf{C}$, and the tensor product of homogeneous maps is
		evaluated using the Koszul sign convention.
		
		Moreover,  $ \mathsf{Hom}_{\fraks}^\bullet(\mathsf {C},\mathsf {P})$ is also a dg Lie subalgebra.
	\end{proposition}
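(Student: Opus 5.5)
The plan is to follow Merkulov--Vallette: build a pre-Lie product $\star$ on $\mathsf{Hom}^\bullet(\mathsf C,\mathsf P)$, make $\partial$ a derivation of it, and restrict to equivariant maps at the end. First, $\partial^2=0$ holds because $d_{\mathsf P}^2=0$, $d_{\mathsf C}^2=0$, and the two sign twists in \eqref{eq-convolution-differential} cancel in the mixed terms. Finite biarity support is kept: $\partial$ preserves biarity, and $f\star g$ of supported maps lands in finitely many biarities because the $\boxtimes_c^{\rm Val}$ composition along a single edge adds biarities.

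The central step is that $\star$ is \emph{graded right pre-Lie}: the associator $(f\star g)\star h-f\star(g\star h)$ is graded symmetric in $g,h$. I would expand the associator using coassociativity of the full decomposition map $\Delta$ of $\mathsf C$, restricted to two-level graphs with two vertices. Iterating $\Delta_{(1,1)}$ yields connected graphs with three vertices. The composite $(f\star g)\star h$ sums over all such graphs in which $h$ is attached to the subgraph formed by $f,g$. The composite $f\star(g\star h)$ counts exactly the graphs in which $f$ is attached to a connected subgraph formed by $g,h$.

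After cancellation, the associator is a sum over graphs where $f$ sits at one level and $g,h$ are each joined to $f$ by a single edge but not to each other. This is the properadic analogue of the cancellation that \eqref{eq-associator-XY} encoded via the terms $X_i,Y_j$ in the proof of Theorem \ref{th-dgla}. Such configurations are symmetric under exchanging $g$ and $h$ up to the Koszul sign $(-1)^{qr}$, and this is where Koszul signs must be tracked. Associativity of $\gamma_{\mathsf P}$ identifies the compositions on both sides. From the pre-Lie identity, the commutator $[\cdot,\cdot]_{\rm conv}$ satisfies the graded Jacobi identity by the standard formal argument, and antisymmetry is immediate.

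Next I would show $\partial$ is a derivation of $\star$. This uses two facts: $d_{\mathsf P}$ is a derivation for $\gamma_{\mathsf P}$, and $d_{\mathsf C}$ is a coderivation for $\Delta_{(1,1)}$. With the Koszul sign on $f\boxtimes g$, these give $\partial(f\star g)=\partial f\star g+(-1)^pf\star\partial g$, and the bracket inherits the derivation property.

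Finally, for the equivariant part, assume $f,g$ are $\fraks_m\times\fraks_n$-equivariant. Then $f\star g$ is equivariant because $\Delta_{(1,1)}$ and $\gamma_{\mathsf P}$ are morphisms of $\fraks$-bimodules, so relabeling inputs and outputs commutes with the sum over graphs. The differential $\partial$ clearly preserves equivariance. Hence $\mathsf{Hom}_{\fraks}^\bullet(\mathsf C,\mathsf P)$ is a dg Lie subalgebra.

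The main obstacle is the pre-Lie symmetry. The infinitesimal coproduct only sees two-vertex graphs, so one has to show the ``disconnected through $f$'' three-vertex configurations form exactly the associator, with no leftover graphs. I would first handle this via the characterization of $\Delta_{(1,1)}$ as the projection of $\Delta$ onto graphs with two vertices.
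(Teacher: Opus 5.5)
Your handling of $\partial^2=0$, of $\partial$ as a derivation, and of equivariance is fine. The central step, however, is false: for properads $\star$ is not right pre-Lie in general, and your cancellation misses a second family of surviving graphs.

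Put $f$ on the upper level.
\begin{itemize}
\item $(f\star g)\star h$ runs over three-vertex graphs with at least one edge $g\to f$ and $h$ attached below the $\{f,g\}$ block.
\item $f\star(g\star h)$ runs over graphs with at least one edge $h\to g$ and $f$ attached above the $\{g,h\}$ block. Here $f$ may be attached \emph{only} to the bottom vertex $h$, through another output of $h$.
\end{itemize}
Graphs common to both sums (edges $h\to g\to f$, possibly also $h\to f$) cancel by coassociativity. What remains is
\[
(f\star g)\star h-f\star(g\star h)=S_1(f;g,h)-S_2(f,g;h).
\]
Here $S_1$ collects graphs with $f$ on top and $g,h$ each joined to $f$ but not to each other; it is graded symmetric in $g,h$. $S_2$ collects graphs with $h$ at the bottom and $f,g$ each joined to $h$ but not to each other; it is graded symmetric in $f,g$. $S_2$ vanishes for operads because $h$ has a single output, which is why operadic convolution algebras are pre-Lie. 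For properads it does not vanish.

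The paper's own computation shows this. In the proof of Theorem \ref{th-dgla}, what survives \eqref{eq-associator-XY} is $\mathfrak A_X+\mathfrak A_Y$, with $\mathfrak A_X$ symmetric in the last two arguments and $\mathfrak A_Y$ in the first two; $\mathfrak A_Y$ is exactly your missing family. Concretely, take $\dim V=1$:
\begin{itemize}
\item $\mathcal C^p_{dL}(V)=\bfk e_p$ for even $p$ and $0$ for odd $p$, with $e_p\square e_q=(p+q+1)e_{p+q}$.
\item The associator of $(e_p,e_q,e_r)$ is $(p-r)(p+q+r+1)e_{p+q+r}$. This is $-18e_8$ for $(p,q,r)=(2,2,4)$ but $0$ for $(2,4,2)$.
\end{itemize}
By Lemma \ref{lem-convolution-square}, this $\square$ is a convolution product $\star$ on $\mathsf{Hom}_{\fraks}^\bullet(\overline{\D^{\text{\textexclamdown}}},\en_V)$. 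So no right pre-Lie identity can hold in the generality of the statement.

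The paper gives no proof of its own; it cites Merkulov--Vallette, whose argument is that $\star$ is \emph{Lie-admissible}. That is the repair. $S_1$ is graded symmetric in its last two arguments and $S_2$ in its first two, so the Koszul-signed alternating sum of the associator over the six permutations of $(f,g,h)$ vanishes term by term. Expanding the brackets, that alternating sum is, up to an overall sign,
\[
[f,[g,h]_{\rm conv}]_{\rm conv}-[[f,g]_{\rm conv},h]_{\rm conv}-(-1)^{pq}[g,[f,h]_{\rm conv}]_{\rm conv}.
\]
This is the same six-term computation that closes the proof of Theorem \ref{th-dgla}.

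A smaller inaccuracy: in a properad, $\Delta_{(1,1)}$ produces two-level graphs with one vertex per level, possibly joined by several edges. ``Joined by a single edge'' holds only in special cases such as $\D^{\text{\textexclamdown}}$, where the biarity count in Lemma \ref{lem-convolution-square} forces one edge. The cancellation should therefore be phrased in terms of which pairs of vertices are connected, as above. With these changes the rest of your plan goes through.
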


	For a reduced $\fraks$-module $V$ and  finite sets $S,~E$, the induction functor $\Ind:\smc\rightarrow \sbmc$ is defined by
	\begin{equation}
		(\Ind V)(S,E)\cong \begin{cases}
			0,&\text{if } S\ncong E,\\
			\bfk[\Aut(E)]\otimes V(E), &\text{otherwise}.
		\end{cases}
	\end{equation}
	In skeletal notation,
	$(\Ind V)(m,n)=0$ if $m\neq n$, $(\Ind V)(n,n)\cong \bfk[\fraks_n]\otimes V(n)$.
	Writing $G_n:=\fraks_n\times \fraks_n$, the induced left $G_n$-module can equivalently be written as $(\Ind V)(n,n)\cong \bfk[G_n]\otimes_{\bfk[\Delta\fraks_n]}V(n) $, where the right action on $V(n)$ is converted into a left action by inversion.
	This functor is exact, has a right adjoint which is the functor of restriction $\operatorname{Res}$, and is monoidal. Hence, it induces the functor $\Ind:\mathrm{protoperads} \rightarrow \mathrm{properads}$  \cite{Leray22}.   It also commutes with the bar, cobar and Koszul dual constructions  \cite{Leray20}. 

	Let $\dlie$ be the protoperad governing double Lie algebras and
	$\D:=\Ind (\dlie)$. By  \cite[Theorem 4.7 and Corollary
	4.8]{Leray20}, the protoperad $\dlie$ is Koszul  and the properad
	$\D$ is Koszul. Thus by  \cite[Theorem 39]{MV09}, $\D$ admits a
	quadratic model: $\Omega(\D^{\text {\textexclamdown}}
	)\xrightarrow{\sim}\D$.
	
	The generator of $\D$ has biarity $(2,2)$; its diagonal skew relation and its quadratic relation become, under a morphism
	$\rho:\D\to\operatorname{End}_V$, respectively the double
	skew-symmetry and the double Jacobi identity.  Hence $\D$-algebra
	structures on $V$ are precisely double Lie algebra structures.
	
	In the homological convention of  \cite{Leray20}, for a homologically graded $\fraks$-module $M$, let $\Sigma M$  denote its aritywise suspension, so that $(\Sigma M)_k(n)=M_{k-1}(n)$. 
	By  \cite{Leray20}, for every integer $n>0$,
	$$
	\dlie^{\text {\textexclamdown}}(n)=\Sigma^{n-1}\widetilde{\operatorname{sgn}}(\mathbb{Z}/n\mathbb{Z})\uparrow^{\fraks_n}_{\mathbb{Z}/n\mathbb{Z}}  \cong  \Sigma^{n-1}\bfk_{\chi_n}\otimes_{C_n} \bfk [\fraks_n],
	$$
	where $ C_n:=\langle\sigma_n\rangle$ and $\chi_n(\sigma_n^r):=(-1)^{(n-1)r}$.
	Thus,
	\begin{equation}\label{eq-induced-coproperad}
		\D^{\text{\textexclamdown}}(m,n)\cong \begin{cases}
			\Sigma^{n-1}(\bfk[\fraks_n\times \fraks_n]\otimes_{\bfk[\Delta C_n]}\bfk_{\chi_n}), &m=n\geq1,\\
			0,&m\neq n.
		\end{cases}
	\end{equation}
	Here $\Delta C_n=\{ (c_n,c_n)\mid c_n\in C_n\}$.
	Note that $\D^{\text{\textexclamdown}}(n,n)$ is concentrated in the  homological degree $n-1$, and next we use cohomological grading, that is, reverse the grading.
	
	Let $\xi_n$ be the element dual to the standard labelled stairway, with the normalization used after Lemma 1.32 of  \cite{LV22}.
	Under \eqref{eq-induced-coproperad}, $\xi_n$ corresponds to $1\otimes1$. From now on, $ \D^{\text{\textexclamdown}}$ denotes the same coproperad with reversed grading. Thus its component of biarity $(n,n)$ is concentrated in cohomological degree $1-n$.
	Then
	\begin{equation}\label{eq-xi-cyclic-character}
		(\sigma_n^r,\sigma_n^r)\cdot\xi_n =(-1)^{(n-1)r}\xi_n,  \qquad |\xi_n|=1-n.
	\end{equation}
	For $n=1$, $\xi_1$ is the coaugmentation element.
	
	\begin{proposition}\label{prop-def-complex}
		For $p\in\mathbb Z$ and $n\geq1$, the evaluation on $\xi_n$ induces an isomorphism
		\begin{equation*}
			\Hom_{G_n}^{ p}     (\D^{\text{\textexclamdown}}(n,n),\en(V^{\otimes n}) )
			\cong
			\begin{cases}
				\mathcal C_{dL}^{p}(V),&n=p+1,\\
				0,&n\neq p+1.
			\end{cases}
		\end{equation*}
		In particular, the left-hand side vanishes for $p<0$.
		Consequently, there is an isomorphism of graded vector spaces
		\begin{equation}\label{eq-Theta-graded-space-isomorphism}
			\Theta=\bigoplus_{p\geq0}\Theta_p:
			\mathsf{Hom}_{\fraks}^\bullet(\D^{\text{\textexclamdown}},\en_V)
			\xrightarrow{\ \cong\ }\mathcal C_{dL}^\bullet(V),
		\end{equation}
		given by
		$        \Theta_p(F):=F(\xi_{p+1}) $ for $ F\in \mathsf{Hom}_{\fraks}^p(\D^{\text{\textexclamdown}},\en_V)$, where $\en_V$ denotes the reduced endomorphism properad of $V$ defined above; in particular, $\en_V(n,n)=\en(V^{\otimes n})$.
		
	\end{proposition}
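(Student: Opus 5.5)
The plan is to use the description \eqref{eq-induced-coproperad} of $\D^{\text{\textexclamdown}}(n,n)$ as a cyclic induced module and apply Frobenius reciprocity. The space is concentrated in a single cohomological degree $1-n$, and $\en(V^{\otimes n})$ is concentrated in degree $0$. So a homogeneous map of degree $p$ from $\D^{\text{\textexclamdown}}(n,n)$ to $\en(V^{\otimes n})$ can be nonzero only when $(1-n)+p=0$, that is, $n=p+1$. This gives the vanishing for $n\neq p+1$. Since $n\geq1$, it also gives the vanishing of the whole left-hand side for $p<0$.

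For $n=p+1$, I would identify $\D^{\text{\textexclamdown}}(n,n)\cong \bfk[G_n]\otimes_{\bfk[\Delta C_n]}\bfk_{\chi_n}$, with $\xi_n\leftrightarrow 1\otimes 1$; the suspension only shifts degree and does not affect the $G_n$-module structure. As a $G_n$-module this is the induced module $\Ind_{\Delta C_n}^{G_n}\bfk_{\chi_n}$, generated by $\xi_n$. By the universal property of induction (Frobenius reciprocity),
\[
\Hom_{G_n}\bigl(\Ind_{\Delta C_n}^{G_n}\bfk_{\chi_n},\en(V^{\otimes n})\bigr)\cong \Hom_{\Delta C_n}\bigl(\bfk_{\chi_n},\en(V^{\otimes n})\bigr),
\]
and the isomorphism is $F\mapsto F(\xi_n)$. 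The right-hand side is the space of $P\in\en(V^{\otimes n})$ with $(\sigma_n^r,\sigma_n^r)\cdot P=\chi_n(\sigma_n^r)P$ for all $r$. By \eqref{eq-xi-cyclic-character} it suffices to check this for the generator $r=1$. Using the action $(\sigma,\tau)\cdot F=\sigma F\tau^{-1}$ on $\en_V$, the condition reads $\sigma_n P\sigma_n^{-1}=(-1)^{n-1}P=(-1)^pP$. This is exactly the defining condition of $\mathcal C^p_{dL}(V)$.

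One point to check is that the operator $\sigma_n$ on $V^{\otimes n}$ in the Notation (which sends $a_1\otimes\cdots\otimes a_n$ to $a_n\otimes a_1\otimes\cdots$) agrees with the operator induced by the generator of $C_n$ under the convention $\sigma(v_1\otimes\cdots)=v_{\sigma^{-1}(1)}\otimes\cdots$. If it instead gives the inverse cycle, the condition becomes $\sigma_n^{-1}P\sigma_n=(-1)^pP$, which is equivalent because $\chi_n$ is a character with $\chi_n(\sigma_n^{-1})=\chi_n(\sigma_n)$. In either case the subspace is the same.

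Finally, $\mathsf{Hom}^p_\fraks(\D^{\text{\textexclamdown}},\en_V)$ is by \eqref{eq-equivariant-convolution-total-degree} a direct sum over biarities $(m,n)$. The components with $m\neq n$ vanish because $\D^{\text{\textexclamdown}}(m,n)=0$, and among the diagonal ones only $n=p+1$ survives. Hence $\Theta_p$ is an isomorphism onto $\mathcal C^p_{dL}(V)$, and summing over $p$ gives \eqref{eq-Theta-graded-space-isomorphism}. The main obstacle is purely a matter of bookkeeping: making sure the character in \eqref{eq-xi-cyclic-character}, the left–right conversion in $\bfk[G_n]\otimes_{\bfk[\Delta C_n]}$, and the inversion convention for the action on $\en_V$ combine to give the sign $(-1)^p$ and not its opposite. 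This is settled by the parity check $(-1)^{(n-1)r}$ with $n-1=p$.
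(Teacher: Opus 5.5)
Your proposal is correct and follows the paper's proof. You use the same degree count to force $n=p+1$, and your appeal to Frobenius reciprocity for $\Ind_{\Delta C_n}^{G_n}\bfk_{\chi_n}$ is exactly what the paper verifies by hand: $F\mapsto F(\xi_n)$ is injective because $\xi_n$ generates, and it is surjective via the explicit extension $F_P((a,b)\otimes 1)=a\circ P\circ b^{-1}$, which is well defined precisely because $\sigma_nP\sigma_n^{-1}=\chi_n(\sigma_n)P$. Your remark on the orientation of $\sigma_n$ is a harmless extra check, since $\chi_n$ takes the same value on $\sigma_n$ and $\sigma_n^{-1}$.
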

	
	\begin{proof}
		The source $\D^{\text{\textexclamdown}}(n,n)$ is concentrated in cohomological degree $1-n$, and the target $\en(V^{\otimes n})$ is concentrated in degree $0$.
		A degree $p$ map between them can therefore be nonzero only if $1-n+p=0$, that is, $n=p+1$.
		
		Fix $n\geq1$ and let $F$ be a $G_n$-equivariant map of degree $n-1$. Let $P:=F(\xi_n)$. Equivariance and
		\eqref{eq-xi-cyclic-character} imply
		\begin{align*}
			\sigma_nP\sigma_n^{-1}=(\sigma_n,\sigma_n)\cdot F(\xi_n)=F\bigl((\sigma_n,\sigma_n)\cdot\xi_n\bigr)=(-1)^{n-1}P.
		\end{align*}
		Hence $P\in\mathcal C_{dL}^{n-1}(V)$.
		Since $\xi_n$ generates $\D^{\text{\textexclamdown}}(n,n)$ as a $G_n$-module,
		every element in $\D^{\text{\textexclamdown}}(n,n)$ can be written as $ u=\sum_j \lambda_j (a_j,b_j)\cdot \xi_n$, where $\lambda_j\in \bfk$ and $(a_j,b_j)\in G_n$.
		If  $\Theta_{n-1} {F}=F(\xi_n) =0$, then for every $u=\sum_j \lambda_j (a_j,b_j)\cdot \xi_n$,
		$$
		F(u)=\sum_j \lambda_jF(  (a_j,b_j)\cdot \xi_n)=\sum_j \lambda_j(a_j,b_j)\cdot F(\xi_n)=0.
		$$
		Thus $\ker \Theta_{n-1}=0$, which means that $\Theta_{n-1}$ is injective.
		
		Given $P\in\mathcal C_{dL}^{n-1}(V)$, define
		$F_P ((a,b)\otimes1):= a\circ P\circ b^{-1} $ for  $(a,b)\in G_n$.
		For $c=\sigma_n^r\in C_n$, we have
		\begin{align*}
			F_P((ac,bc)\otimes1 )= a\circ c\circ P\circ c^{-1}\circ b^{-1}=\chi_n(c)a\circ P\circ b^{-1}=F_P((a,b)\otimes\chi_n(c) ).
		\end{align*}
		Thus $F_P$   is well defined. For $(u,v)\in G_n$,
		\begin{align*}
			F_P ((u,v)(a,b)\otimes1 )=u\circ a\circ P\circ b^{-1}\circ v^{-1}=(u,v)\cdot F_P ((a,b)\otimes1),
		\end{align*}
		which means that  $F_P$ is $G_n$-equivariant. It has degree $n-1$, and $F_P(\xi_n)=P$. Thus  $\Theta_{n-1}$ is an isomorphism.
		
		Since $\D^{\text{\textexclamdown}}$ is zero off the diagonal, each degree $p$ receives a contribution from only the biarity $(p+1,p+1)$. Taking the graded total spaces proves \eqref{eq-Theta-graded-space-isomorphism}.
	\end{proof}
	
	Let $\overline{\D^{\text{\textexclamdown}}}:=\ker(\epsilon_{\D^{\text{\textexclamdown}}}:\D^{\text{\textexclamdown}}\longrightarrow I_{\boxtimes^{\rm Val}})$.
	Since $\overline{\D^{\text{\textexclamdown}}}(1,1)=0$, $\Theta$ restricts to a graded vector space isomorphism
	$ \mathsf{Hom}_{\fraks}^\bullet (\overline{\D^{\text{\textexclamdown}}},\en_V )
	\cong\bigoplus_{p\geq1}\mathcal C_{dL}^p(V)$.
	On this positive degree part, we denote by $\star$ the convolution
	product defined using the reduced infinitesimal coproduct.
	
	\begin{lemma}\label{lem-convolution-square}
		Let $F\in   \mathsf{Hom}_{\fraks}^{p}(\overline{\D^{\text{\textexclamdown}}},\en_V)$, $G\in \mathsf{Hom}_{\fraks}^{q}(\overline{\D^{\text{\textexclamdown}}},\en_V)$ for $p,q\geq1$.
		Then $F\star G\in \mathsf{Hom}^{p+q}_\fraks(\overline{\D^{\text{\textexclamdown}}},\en_V)$ and
		\begin{equation}\label{eq-convolution-square-intertwining}
			\Theta_{p+q}(F\star G)= \Theta_p(F)\square\Theta_q(G).
		\end{equation}
	\end{lemma}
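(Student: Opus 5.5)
The plan is to evaluate both sides on the generator $\xi_{p+q+1}$ and to compute the reduced infinitesimal coproduct of $\xi_{p+q+1}$ explicitly.

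First, the easy structural points. Since $\star$ is built from $\gamma_{\en_V}$, the infinitesimal coproduct $\Delta_{(1,1)}$, and $F,G$, and all of these are $\fraks$-bimodule maps, $F\star G$ is $G_n$-equivariant. Its degree is $p+q$ by additivity, so $F\star G\in\mathsf{Hom}^{p+q}_\fraks(\overline{\D^{\text{\textexclamdown}}},\en_V)$. By Proposition \ref{prop-def-complex}, $F\star G$ is then determined by its value on $\xi_{p+q+1}$. Degree reasons force only the components of $\Delta_{(1,1)}(\xi_{p+q+1})$ with vertices of biarity $(p+1,p+1)$ and $(q+1,q+1)$ to pair nontrivially with $F\otimes G$, in that order. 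The component with the roles of $F$ and $G$ swapped is killed because $F$ vanishes outside biarity $(p+1,p+1)$.

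Second, the coproduct itself. Since $\Ind$ commutes with the Koszul dual construction \cite{Leray20} and is monoidal, the infinitesimal decomposition of $\xi_n$ in $\D^{\text{\textexclamdown}}$ is induced from that of the corresponding generator of $\dlie^{\text{\textexclamdown}}(n)$. The latter is the cyclic skew-symmetrisation of the stairway element dual to the quadratic data. Concretely, I will show that the two-vertex part of $\Delta_{(1,1)}(\xi_{p+q+1})$ of type $((p+1),(q+1))$ equals
\[
\sum_{s=0}^{p+q}(-1)^{(p+q)s}(\sigma^s,\sigma^s)\cdot\bigl(\xi_{p+1}\otimes\xi_{q+1}\bigr)_{\diamond}.
\]
Here $(\xi_{p+1}\otimes\xi_{q+1})_\diamond$ is the two-level graph in which the last output of the $\xi_{q+1}$-vertex is glued to the last input of the $\xi_{p+1}$-vertex. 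The other legs are arranged as in $P\diamond Q=(P\otimes\id^{\otimes q})\circ(\id^{\otimes p}\otimes Q)$, and under $\gamma_{\en_V}$ this graph gives exactly $\Theta_p(F)\diamond\Theta_q(G)$.

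To justify this formula, I will use the description of $\dlie^{\text{\textexclamdown}}(n)$ as the induced cyclic sign representation. Dual to the stairway normalization of \cite{LV22}, the coproduct of the stairway generator should split it into exactly one pair of substairways for each cut, up to the cyclic orbit. Equivariance under $\Delta C_{p+q+1}$ with character $\chi_{p+q+1}$, from \eqref{eq-xi-cyclic-character}, then forces the coefficients $(-1)^{(p+q)s}$ on the conjugates.

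Third, apply $\gamma_{\en_V}\circ(F\boxtimes^{\rm Val}_{(1,1)}G)$. There is a Koszul sign from passing $G$ past $\xi_{p+1}$, namely $(-1)^{q|\xi_{p+1}|}=(-1)^{qp}$. This sign is absorbed into the normalization of the decomposition, which may carry a compensating $(-1)^{pq}$ depending on the orientation convention. Using equivariance of $\gamma$, the conjugation by $\sigma^s$ passes through, and we obtain $\sum_s(-1)^{(p+q)s}\sigma^s(\Theta_pF\diamond\Theta_qG)\sigma^{-s}=\Theta_p(F)\square\Theta_q(G)$.

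The main obstacle is the second step. It requires pinning down the precise infinitesimal coproduct of $\xi_n$, including which legs are glued and the overall sign conventions of \cite{Leray20,LV22}. I would first verify the formula for $p=q=1$, matching Example \ref{ex-dl-element} with the dual of the Jacobi relation, and then argue the general case by coassociativity and the cyclic character. If needed, the normalization of $\xi_n$ can be fixed inductively so that the identity holds.
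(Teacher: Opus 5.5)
Your outline is the same as the paper's proof. You reduce to $\xi_{p+q+1}$ by equivariance, let the supports of $F$ and $G$ select a single cut in the infinitesimal decomposition, and pick up the Koszul sign $(-1)^{|G||\xi_{p+1}|}=(-1)^{pq}$. The cyclic character then assembles $\square$.

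The gap is the one step that carries the content: the explicit formula for $\overline{\Delta}_{(1,1)}(\xi_{p+q+1})$, including its sign. You only assert it ("should split", "may carry a compensating $(-1)^{pq}$ depending on the orientation convention"). The paper does not derive it either; it imports from \cite[Lemma 1.32]{LV22}
\[
\overline{\Delta}_{(1,1)}(\xi_{n})=\sum_{k=2}^{n-1}\sum_{r=0}^{n-1}(-1)^{(k-1)(n-k)}\chi_n(\sigma_n^r)\,\sigma_n^r\circ[\xi_k\otimes\xi_{n+1-k}]_{k,1}\circ\sigma_n^{-r},\qquad n=p+q+1 .
\]
At $k=p+1$ the coefficient is $(-1)^{pq}$, and it cancels your Koszul sign exactly. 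Without this formula, or an independent computation of it, the argument does not determine the answer.

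Your proposed substitutes do not close the gap, for three reasons.
\begin{enumerate}
\item $\Delta C_n$-equivariance with character $\chi_n$ only forces the coefficients along one $\Delta C_n$-orbit to differ by $\chi_n$. It says nothing about the overall scalar on the orbit of $[\xi_{p+1}\otimes\xi_{q+1}]_{p+1,1}$, which a priori could be $0$ or of either sign. It also does not exclude other one-edge gluings of the same vertex types; only the actual decomposition settles that.
\item The sign matters. If the scalar were $+1$ rather than $(-1)^{pq}$, you would get $\Theta_{p+q}(F\star G)=(-1)^{pq}\,\Theta_p(F)\square\Theta_q(G)$. Then $\Theta$ would intertwine the brackets only up to $(-1)^{pq}$.
\item Renormalizing $\xi_n$ inductively could repair the sign. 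But then you prove the identity for a different $\Theta$ than the one fixed by the stairway normalization. You would still need to know that the scalar is nonzero and that the inductive choices are consistent with coassociativity.
\end{enumerate}

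Two smaller corrections. First, in $P\diamond Q=(P\otimes\id^{\otimes q})\circ(\id^{\otimes p}\otimes Q)$ it is the first output of $Q$, sitting in position $p+1$, that feeds the last input of $P$. The graph you describe, with the last output of $Q$ glued in, does not compose to $P\diamond Q$. Second, you should say explicitly, as the paper does, that the biarity count $(p+1)+(q+1)-e=p+q+1$ forces exactly one internal edge.
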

	
	\begin{proof}
		Let $P:=F(\xi_{p+1})$, $Q:=G(\xi_{q+1})$.
		By Proposition \ref{prop-def-complex}, $F$ and $G$ are supported
		only in biarities $(p+1,p+1)$ and $(q+1,q+1)$, respectively.
		Since $\overline\Delta_{(1,1)}$ and $\gamma_{\en_V}$ are degree $0$ maps, the convolution product
		$$
		F\star G    =\gamma_{\en_V}\circ    (F\boxtimes_{(1,1)}^{\mathrm{Val}}G)\circ\overline\Delta_{(1,1)} \in \mathsf{Hom}^{p+q}_\fraks(\overline{\D^{\text{\textexclamdown}}},\en_V).
		$$
		Then by Proposition \ref{prop-def-complex}, $F\star G$ is concentrated in biarity $(p+q+1,p+q+1)$. Moreover, since $F\star G$ is $G_{p+q+1}$-equivariant and $\xi_{p+q+1}$ generates $\overline{\D^{\text{\textexclamdown}} }(p+q+1,p+q+1)$ as a $G_{p+q+1}$-module, it suffices to compute $(F\star G)(\xi_{p+q+1})$.
		
		We now show that only graphs with a single internal edge can contribute to this value. Consider a graph occurring in    $\overline\Delta_{(1,1)}(\xi_{p+q+1})$ whose decoration is not annihilated by $F\boxtimes_{(1,1)}^{\mathrm{Val}}G$.
		After suppressing identity vertices, this is a connected two-vertex graph. By the support conditions on $F$ and $G$,
		its vertices have biarities $(p+1,p+1)$ and $(q+1,q+1)$.
		Let $e$ be the number of internal edges joining these vertices. Each internal edge uses one output flag and one input flag.
		Hence the external biarity of the graph is $((p+1)+(q+1)-e,\ (p+1)+(q+1)-e)$.
		Since this external biarity is $(p+q+1,p+q+1)$, we obtain $e=1$.
		Thus every potentially nonzero contribution to  $(F\star G)(\xi_{p+q+1})$ comes from a graph in which the two vertices are joined by exactly one internal edge.
		
		We  use the explicit  infinitesimal decomposition formula following  \cite[Lemma 1.32]{LV22}, that is,
		$$
		\overline{\Delta_{(1,1)}}( \xi_{p+q+1})= \sum_{2\leq k\leq p+q,\sigma_{p+q+1}^r\in C_{p+q+1}}(-1)^{(k-1)(p+q+1-k)}\chi_{p+q+1}(\sigma_{p+q+1}^r)  \sigma_{p+q+1}^r\circ [\xi_k\otimes \xi_{p+q+2-k} ]_{k,1} \circ \sigma_{p+q+1}^{-r} ,
		$$
		where $[\xi_k\otimes \xi_{p+q+2-k} ]_{k,1}$ denotes the class in the infinitesimal composition product obtained by  joining the first output of the second decoration to the $k$-th input of the first decoration. The first decoration occupies the positions $1,\ldots,k$, and the second occupies the positions  $k,\ldots,p+q+1$, with the remaining external inputs and outputs in their natural order. We order the first decoration before the second.
		For this term, the corresponding elementary composition in  $\en_V$ is
		$\gamma_{\en_V} [P\otimes Q]_{p+1,1}=(P\otimes\id^{\otimes q})\circ(\id^{\otimes p}\otimes Q)=P\diamond Q$.
		
		Therefore, by the support conditions on $F$ and $G$, every summand with $k\neq p+1$ is annihilated by
		$F\boxtimes_{(1,1)}^{\mathrm{Val}}G$. Hence we have
		\begin{align*}
			\Theta_{p+q}(F\star G)=&(F\star G)(\xi_{p+q+1})=\gamma_{\en_V}  \circ  (F\boxtimes_{(1,1)}^{\rm Val} G)(\overline{\Delta_{(1,1)} }( \xi_{p+q+1}) )\\
			=&   \sum_{\sigma_{p+q+1}^r\in C_{p+q+1}}(-1)^{pq}\chi_{p+q+1}(\sigma_{p+q+1}^r)  \sigma_{p+q+1}^r ((-1)^{|G| | \xi_{p+1}|}P\diamond Q)\sigma_{p+q+1}^{-r}   \\
			&=  \sum_{r=0}^{p+q}(-1)^{(p+q)r}\sigma_{p+q+1}^r\circ(P\diamond Q)\circ\sigma_{p+q+1}^{-r}\\
			&=P\square Q.
		\end{align*}
		This proves \eqref{eq-convolution-square-intertwining}.
	\end{proof}
	
	We now consider the degree $0$ term. Since $\D^{\text{\textexclamdown}}(1,1)=\bfk$,
	$ \Hom_{\fraks}(I_{\boxtimes^{\rm Val}},\en_V)\cong \en(V)$, which means that
	$ \mathsf{Hom}_\fraks^\bullet( \D^{\text{\textexclamdown}},\en_V )=
	\en(V)\oplus \mathsf{Hom}_{\fraks}^\bullet(\overline{\D^{\text{\textexclamdown}}},\en_V )$
	as graded vector spaces.
	
	For $h\in\en(V)$, let $D_h^{(n)}:=\sum_{i=1}^{n}\id^{\otimes(i-1)}\otimes h\otimes\id^{\otimes(n-i)}$.
	Retain the convolution bracket in positive degrees and extend it
	to the full graded space by
	\begin{align}
		&[h,k]_{\mathrm{aug}} :=hk-kh,\qquad h,k\in\en(V),
		\label{eq-augmented-degree-zero-bracket}\\
		&[h,F]_{\mathrm{aug}}(c):=D_h^{(n)}F(c)-F(c)D_h^{(n)},
		\qquad
		c\in\D^{\text{\textexclamdown}}(n,n),
		\label{eq-augmented-mixed-bracket}
	\end{align}
	for $F$ of positive degree, and by graded skew-symmetry. Thus $[\cdot,\cdot]_{\mathrm{aug}}$ denotes the convolution bracket augmented by the change-of-basis action; its underlying graded space is still
	$ \mathsf{Hom}_{\fraks}^\bullet
	(\D^{\text{\textexclamdown}},\en_V)$.
	
	\begin{theorem}\label{th-properadic-deformation-complex}
		The isomorphism of graded vector spaces in Proposition \ref{prop-def-complex} is an
		isomorphism of graded Lie algebras
		\begin{equation}\label{eq-Theta-graded-Lie-isomorphism}
			\Theta:(        \mathsf{Hom}_{\fraks}^\bullet
			(\D^{\text{\textexclamdown}},\en_V),[\cdot,\cdot]_{\mathrm{aug}})
			\xrightarrow{\ \cong \ }( \mathcal {C}_{dL}^\bullet(V),
			[\cdot,\cdot]_{dL}).
		\end{equation}
		
		Let $\mu$ be a double Lie bracket on $V$, let
		$\rho_\mu:\D\to\en_V$ be the corresponding structure morphism,
		and let
		$\kappa:\D^{\text{\textexclamdown}}\to\D$
		be the canonical Koszul twisting morphism.
		Then $\Theta$ induces an isomorphism of dg Lie
		algebras
		\begin{equation}\label{eq-Theta-dg-Lie-isomorphism}
			\Theta_\mu: ( \mathsf{Hom}_{\fraks}^\bullet   (\D^{\text{\textexclamdown}},\en_V),[\cdot,\cdot]_{\mathrm{aug} } ,[\alpha_\mu,\cdot]_{\mathrm{aug}})
			\xrightarrow{\ \cong\ }
			(\mathcal C_{dL}^\bullet(V),    [\cdot,\cdot]_{dL},d_\mu),
		\end{equation}
		where $\alpha_\mu:=\rho_\mu\circ\kappa$ and $d_\mu=[\mu,\cdot]_{dL}$.
	\end{theorem}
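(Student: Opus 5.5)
Since $\Theta$ is already a graded vector space isomorphism (Proposition~\ref{prop-def-complex}), the plan is to check that it intertwines brackets, splitting the verification by degree.

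\emph{Both degrees positive.} For $F,G$ of degrees $p,q\geq1$, the augmented bracket is the convolution bracket. Lemma~\ref{lem-convolution-square} gives $\Theta(F\star G)=\Theta(F)\square\Theta(G)$, and the convolution and $dL$ brackets are built from $\star$ and $\square$ with the same sign $(-1)^{pq}$. Hence $\Theta[F,G]_{\mathrm{aug}}=[\Theta F,\Theta G]_{dL}$.

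\emph{Degree zero.} Here $\mathcal C^0_{dL}(V)=\en(V)$, since $\sigma_1=\id$. For $h,k\in\en(V)$, the definition gives
$$h\square k=\sum_{s=0}^{0}\sigma_1^s(h\diamond k)\sigma_1^{-s}=hk,$$
so $[h,k]_{dL}=hk-kh=[h,k]_{\mathrm{aug}}$.

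\emph{Mixed degree.} Take $h\in\en(V)$ and $P=\Theta F\in\mathcal C^p_{dL}(V)$ with $p\geq1$. I would show directly that
\begin{align*}
h\square P&=\sum_{s=0}^{p}(-1)^{ps}\sigma^s\bigl((h\otimes\id^{\otimes p})P\bigr)\sigma^{-s},\\
P\square h&=\sum_{s=0}^{p}(-1)^{ps}\sigma^s\bigl(P(\id^{\otimes p}\otimes h)\bigr)\sigma^{-s}.
\end{align*}
The cyclic invariance $\sigma P\sigma^{-1}=(-1)^pP$ cancels the signs and moves $h$ through all slots, giving $h\square P=D^{(p+1)}_hP$ and $P\square h=PD^{(p+1)}_h$. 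Then $[h,P]_{dL}=D_hP-PD_h$, which equals $\Theta([h,F]_{\mathrm{aug}})$ after evaluating \eqref{eq-augmented-mixed-bracket} at $c=\xi_{p+1}$. The case $[F,h]$ follows by graded skew-symmetry on both sides, and bilinearity finishes the proof of \eqref{eq-Theta-graded-Lie-isomorphism}. A byproduct is that $[\cdot,\cdot]_{\mathrm{aug}}$ is a graded Lie bracket, transported from Theorem~\ref{th-dgla}.

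\emph{Differential statement.} It suffices to show $\Theta(\alpha_\mu)=\mu$: a graded Lie isomorphism then sends $[\alpha_\mu,\cdot]_{\mathrm{aug}}$ to $[\mu,\cdot]_{dL}=d_\mu$, and \eqref{eq-Theta-dg-Lie-isomorphism} follows. The canonical twisting morphism $\kappa$ is supported on the weight-one part $s^{-1}$ of the generators, which is biarity $(2,2)$ in cohomological degree $-1$. So $\alpha_\mu$ is concentrated in biarity $(2,2)$, has degree $1$, and $\Theta_1(\alpha_\mu)=\rho_\mu(\kappa(\xi_2))$.

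The main obstacle is normalization: checking that $\kappa(\xi_2)$ is exactly the generator whose image under $\rho_\mu$ is $\mu$, rather than $-\mu$ or a permuted version. I would first compare the $C_2$-character $\chi_2$ on $\xi_2$ with the skew relation of the generator of $\D$. That fixes the answer up to a scalar, and any nonzero scalar $\lambda$ can be absorbed by rescaling $\xi_2$, which only changes $\Theta_1$. If a sign appears in the paper's conventions, I would instead define the correspondence $\mu\leftrightarrow\rho_\mu$ so that $\rho_\mu(\kappa(\xi_2))=\mu$. As a consistency check, the MC equation for $\alpha_\mu$, which expresses that $\rho_\mu$ is a properad morphism, should match $\mu\square\mu=0$ from Corollary~\ref{cor-mc-dla}.
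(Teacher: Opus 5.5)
Your argument follows the paper's proof in all essentials. Positive degrees go through Lemma~\ref{lem-convolution-square}, degree zero through $h\square k=hk$, and the mixed case through $h\square P=D_h^{(p+1)}P$ and $P\square h=PD_h^{(p+1)}$. The dg statement is then reduced to $\Theta_1(\alpha_\mu)=\mu$.

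The one structural difference concerns the graded Lie property of $[\cdot,\cdot]_{\mathrm{aug}}$. You obtain it by transport along the bracket-preserving bijection $\Theta$, using Theorem~\ref{th-dgla}. The paper instead first checks that $h\mapsto D_h^{(n)}(\cdot)-(\cdot)D_h^{(n)}$ acts by derivations of $\star$, which identifies $[\cdot,\cdot]_{\mathrm{aug}}$ intrinsically as a semidirect-product extension of the reduced convolution algebra. Your shortcut is valid: Theorem~\ref{th-dgla} is proved independently, and $[h,F]_{\mathrm{aug}}$ stays equivariant because $D_h^{(n)}$ commutes with permutations. The paper's check buys an explanation of why the augmented bracket is natural on the properadic side.

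Your discussion of the normalization, however, contains two incorrect claims.

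\emph{The character comparison gives no information.} By \eqref{eq-induced-coproperad}, $\D^{\text{\textexclamdown}}(2,2)$ is two-dimensional. Since $G_2=\fraks_2\times\fraks_2$ is abelian, $(\sigma_2,\sigma_2)$ acts by $\chi_2(\sigma_2)=-1$ on all of it. The same holds on its image under $\kappa$, the weight-one part of $\D(2,2)$, spanned by the binary generator $g$ and $(\sigma_2,1)\cdot g$. So the comparison does not fix $\kappa(\xi_2)$ up to a scalar: a priori $\rho_\mu(\kappa(\xi_2))$ could be any $a\mu+b\,\sigma_2\circ\mu$.

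\emph{Rescaling $\xi_2$ alone is not harmless.} Written in terms of $\lambda\xi_2$, the infinitesimal decompositions behind Lemma~\ref{lem-convolution-square} acquire factors $\lambda^{-2}$ in $\overline{\Delta}_{(1,1)}(\xi_3)$ and $\lambda^{-1}$ in $\overline{\Delta}_{(1,1)}(\xi_4)$. Then $\Theta$ stops preserving brackets unless $\lambda=1$. A consistent rescaling is $\xi_n\mapsto\lambda^{n-1}\xi_n$, i.e.\ composing with the automorphism $P\mapsto\lambda^pP$.

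The correct justification is what the paper calls ``the chosen normalization of the binary generator''. The element $\xi_2$, the one-vertex stairway, is the suspension of $g$ itself, and $\kappa$ is the desuspension of the weight-one part onto the generators. Hence $\alpha_\mu(\xi_2)=\rho_\mu(g)=\mu$ up to the suspension sign. As you note, any remaining sign can be absorbed into the correspondence $\mu\leftrightarrow\rho_\mu$, since the skew relation is linear and the double Jacobi relation is homogeneous quadratic.
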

	
	\begin{proof}
		We first verify that the degree $0$ formulas define a graded Lie extension of the reduced convolution algebra.
		
		For $h,k\in\en(V)$, ordinary commutators satisfy
		$   [D_h^{(n)},D_k^{(n)}]=D_{hk-kh}^{(n)}$.
		Moreover, for
		$P\in\mathcal C_{dL}^p(V)$ and
		$Q\in\mathcal C_{dL}^q(V)$ with $p,q\geq1$, the identity
		$[A,BC]=[A,B]C+B[A,C]$ gives
		\begin{align*}
			D_h^{(p+q+1)}(P\diamond Q)-(P\diamond Q)D_h^{(p+q+1)}=
			(D_h^{(p+1)}P-PD_h^{(p+1)})\diamond Q+P\diamond(D_h^{(q+1)}Q-QD_h^{(q+1)}).
		\end{align*}
		Since $D_h^{(p+q+1)}$ commutes with
		$\sigma_{p+q+1}$, summing the cyclic conjugates yields
		\begin{align*}
			D_h^{(p+q+1)}(P\square Q)-(P\square Q)D_h^{(p+q+1)}=(D_h^{(p+1)}P-PD_h^{(p+1)})\square Q+P\square(D_h^{(q+1)}Q-QD_h^{(q+1)}).
		\end{align*}
		By Lemma~\ref{lem-convolution-square}, the action in
		\eqref{eq-augmented-mixed-bracket} is therefore an action by
		derivations of the reduced convolution product, and hence of its
		graded commutator. Thus
		\eqref{eq-augmented-degree-zero-bracket} and
		\eqref{eq-augmented-mixed-bracket} define the corresponding
		semidirect-product graded Lie bracket.
		
		Now let $F$ and $G$ have positive degrees $p$ and $q$, and set
		$P:=\Theta_p(F)$ and $Q:=\Theta_q(G)$.
		Lemma~\ref{lem-convolution-square} gives
		\begin{align*}
			\Theta_{p+q}([F,G]_{\mathrm{aug}})
			&=
			\Theta_{p+q}(F\star G)
			-
			(-1)^{pq}\Theta_{p+q}(G\star F)\\
			&=
			P\square Q-(-1)^{pq}Q\square P\\
			&=
			[P,Q]_{dL}.
		\end{align*}
		
		For $h,k\in\en(V)$, the assertion follows from $\Theta_0=\id_{\en(V)}$ and the fact that both brackets are the
		ordinary commutator.
		
		For $h\in\en(V)$ and $P=\Theta_p(F)\in\mathcal C_{dL}^p(V)$, $p\geq1$, cyclicity gives
		\begin{align*}
			h\square P&=\sum_{r=0}^{p}(-1)^{pr}\sigma_{p+1}^r(h\otimes\id^{\otimes p})P\sigma_{p+1}^{-r}=(\sum_{r=0}^{p}\sigma_{p+1}^r(h\otimes\id^{\otimes p})\sigma_{p+1}^{-r})P=D_h^{(p+1)}P,
		\end{align*}
		and similarly,
		\begin{align*}
			P\square h&=\sum_{r=0}^{p}(-1)^{pr}\sigma_{p+1}^r   P(\id^{\otimes p}\otimes h)\sigma_{p+1}^{-r}=   P(\sum_{r=0}^{p}\sigma_{p+1}^r(\id^{\otimes p}\otimes h)\sigma_{p+1}^{-r})= PD_h^{(p+1)}.
		\end{align*}
		Hence
		\begin{align*}
			\Theta_p([h,F]_{\mathrm{aug}})=D_h^{(p+1)}P-PD_h^{(p+1)}=h\square P-P\square h=[h,P]_{dL}.
		\end{align*}
		The reversed mixed bracket follows from graded skew-symmetry.
		Thus $\Theta$ preserves brackets in every degree, and hence is an isomorphism of  graded Lie algebras.
		
		The canonical Koszul twisting morphism is supported in weight $1$. With the chosen normalization of the binary generator, $ \alpha_\mu(\xi_2)=\mu$, $\alpha_\mu(\xi_n)=0$ if $n\neq2$.
		Therefore $\Theta_1(\alpha_\mu)=\mu$. Since
		$[\mu,\mu]_{dL}=0$, bracket compatibility and the injectivity of
		$\Theta$ give $ [\alpha_\mu,\alpha_\mu]_{\mathrm{aug}}=0$.
		Both $\D^{\text{\textexclamdown}}$ and $\en_V$ have zero
		internal differential. Their twisted convolution differential is $[\alpha_\mu,\cdot]_{\mathrm{aug}}$, and
		\begin{align*}
			\Theta ([\alpha_\mu,F]_{\mathrm{aug}})=[\Theta(\alpha_\mu),\Theta(F)]_{dL}=[\mu,\Theta(F)]_{dL}=
			d_\mu\Theta(F)
		\end{align*}
		for every homogeneous $F$.
		This proves \eqref{eq-Theta-dg-Lie-isomorphism}.
	\end{proof}
	\begin{remark}\label{rem-prop-complex}
		Theorem \ref{th-properadic-deformation-complex} provides a properadic interpretation of the cochain complex of a double Lie  algebra. 
		Let $\varepsilon:\D_\infty:=\Omega( \D^{\text{ \textexclamdown }   }   )\rightarrow \D$ be the cofibrant resolution of $\D=\Ind(\dlie)$  \cite[Proposition 5.7]{Leray20}. For a double Lie bracket $\mu$, let $ \rho_\mu: \D\rightarrow \en_V$ be the structure morphism and $\zeta:=\rho_\mu\circ \varepsilon$. Thus we have the commutative diagram
		\begin{equation*}
			\resizebox{0.25\textwidth}{!}{$
				\xymatrix@C=1.4cm@R=1.1cm{
					\Omega(\D^{\text{ \textexclamdown }   })
					\ar[r]^{\varepsilon}            \ar[dr]_{\zeta}
					& \D \ar[d]^{\rho_\mu}\\
					& \en_V.}       $}
		\end{equation*}
		The twisting morphism corresponding to $\zeta$ is $ \alpha_\mu=\rho_\mu\circ \kappa$, where $\kappa: \D^{\text{ \textexclamdown }   }\rightarrow \D$ is the canonical Koszul twisting morphism.
		By  \cite{MV092},  the reduced deformation complex associated with
		this resolution is isomorphic to $(
		\mathsf{Hom}_\fraks^\bullet(\overline{\D^{\text{\textexclamdown}}},\en_V
		), [\alpha_\mu,\cdot]_{\rm conv})$. As explained in the remark
		following  \cite[Theorem 12]{MV092}, this complex admits an
		augmentation by $\Hom_\fraks(I_{\boxtimes^{\rm Val}},\en_V)\cong
		\en(V)$, which is concentrated in degree $0$ in the present
		situation. Thus, by  \cite{MV092}, $ (
		\mathsf{Hom}_{\fraks}^\bullet(\D^{\text{\textexclamdown}},\en_V )
		,d_{\alpha_\mu} ) $ is the \textbf{deformation complex} of
		$\zeta:\D_\infty\rightarrow \en_V$. In this paper, we also call it
		the deformation complex of  the double Lie algebra $(V,\mu)$.
		
		Using the degree $0$ action defined above and then twisting by $\alpha_\mu$, the deformation complex of $(V,\mu)$ has a  convolution dg Lie algebra structure $( \mathsf{Hom}_{\fraks}^\bullet
		(\D^{\text{\textexclamdown}},\en_V),[\cdot,\cdot]_{\mathrm{aug}}, d_{\alpha_\mu} )$,   where $d_{\alpha_\mu}= [\alpha_\mu,\cdot]_{\rm aug}$.
		By Theorem \ref{th-properadic-deformation-complex}, this dg Lie algebra is isomorphic to
		$(\mathcal{C}_{dL}^\bullet(V),[\cdot,\cdot]_{dL},d_\mu)$. Thus the double Lie algebra cochain complex  is identified
		with the  deformation complex of $(V,\mu)$ associated with the Koszul resolution of $\D$.
		
		The cohomology of the deformation complex of $(V,\mu)$ is denoted by
		$$
		H^\bullet( \mathsf{Hom}_\fraks^\bullet(\D^{\text{\textexclamdown}},\en_V ), d_{\alpha_\mu})=\bigoplus_{p\geq 0} H^p( \mathsf{Hom}_\fraks^\bullet(\D^{\text{\textexclamdown}},\en_V ), d_{\alpha_\mu} ),
		$$
		where
		$H^p( \mathsf{Hom}_\fraks^\bullet(\D^{\text{\textexclamdown}},\en_V ), d_{\alpha_\mu} )$ is defined by
		$$
		H^p( \mathsf{Hom}_\fraks^\bullet(\D^{\text{\textexclamdown}},\en_V ), d_{\alpha_\mu} ):= \frac{\ker (d_{\alpha_\mu}: \mathsf{Hom}_{\fraks}^p(\D^{\text{\textexclamdown}},\en_V )\rightarrow \mathsf{Hom}_{\fraks}^{p+1}(\D^{\text{\textexclamdown}},\en_V )  ) } { \mathrm{im} (d_{\alpha_\mu}: \mathsf{Hom}_{\fraks}^{p-1}(\D^{\text{\textexclamdown}},\en_V )\rightarrow \mathsf{Hom}_{\fraks}^p(\D^{\text{\textexclamdown}},\en_V ))},\quad p\geq 1,
		$$
		and $H^0( \mathsf{Hom}_\fraks^\bullet(\D^{\text{\textexclamdown}},\en_V ), d_{\alpha_\mu} ) = \ker (d_{\alpha_\mu}: \mathsf{Hom}_{\fraks}^0(\D^{\text{\textexclamdown}},\en_V )\rightarrow \mathsf{Hom}_{\fraks}^{ 1}(\D^{\text{\textexclamdown}},\en_V )  ) $.
	\end{remark}

	\begin{corollary}\label{cor-dcc-dlc}
		Let $(V,\mu)$ be a double Lie algebra, and let $\alpha_\mu=\rho_\mu\circ \kappa$ be the corresponding twisting morphism.
		For every $p\geq 0$,  $\Theta_p $ induces an isomorphism
		$$
		H^p ( \mathsf{Hom}_\fraks^\bullet( \D^{\text {\textexclamdown} }, \en_V ), d_{\alpha_\mu} ) \xrightarrow{\ \cong \ } H_{dL}^p(V,\mu).
		$$
	\end{corollary}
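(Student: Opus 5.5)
The corollary follows formally from Theorem \ref{th-properadic-deformation-complex}. That theorem says $\Theta_\mu$ is an isomorphism of dg Lie algebras. In particular, $\Theta=\bigoplus_{p\geq0}\Theta_p$ is a degreewise linear isomorphism (Proposition \ref{prop-def-complex}) intertwining the differentials:
\[
\Theta_{p+1}\circ d_{\alpha_\mu}=d_\mu\circ\Theta_p,\qquad p\geq0,
\]
where $d_{\alpha_\mu}=[\alpha_\mu,\cdot]_{\mathrm{aug}}$. This identity follows from $\Theta([\alpha_\mu,F]_{\mathrm{aug}})=[\Theta(\alpha_\mu),\Theta(F)]_{dL}=[\mu,\Theta(F)]_{dL}$, together with $\Theta_1(\alpha_\mu)=\mu$. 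So $\Theta$ is an isomorphism of cochain complexes.

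I would then check directly that an isomorphism of cochain complexes induces isomorphisms on cohomology in each degree.
\begin{itemize}
\item By the intertwining relation, $\Theta_p$ maps $\ker(d_{\alpha_\mu})$ in degree $p$ into $\ker(d_\mu)$ in degree $p$, and maps $\mathrm{im}(d_{\alpha_\mu})$ from degree $p-1$ into $\mathrm{im}(d_\mu)$. Hence $\Theta_p$ descends to a map on $H^p$.
\item The inverse $\Theta^{-1}$ also intertwines the differentials: apply $\Theta_{p+1}^{-1}$ on the left and $\Theta_p^{-1}$ on the right of the relation above. So $\Theta^{-1}$ descends as well, giving the inverse on cohomology.
\end{itemize}

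The only point needing care is the degree-$0$ case and matching the conventions for $H^0$. On the properadic side, $\mathsf{Hom}_\fraks^{-1}(\D^{\text{\textexclamdown}},\en_V)=0$ by Proposition \ref{prop-def-complex}, since the left-hand side vanishes for $p<0$. On the double Lie side, $\mathcal{C}_{dL}^{-1}(V):=0$ by convention. So both $H^0$ groups are plain kernels, and $\Theta_0=\id_{\en(V)}$ identifies them via the $p=0$ case of the intertwining relation. This degree-$0$ case is where the augmented bracket on $\en(V)$ enters, and it is already handled in the proof of Theorem \ref{th-properadic-deformation-complex} by the computation $\Theta_p([h,F]_{\mathrm{aug}})=[h,P]_{dL}$ and its graded skew-symmetric counterpart. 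I expect no genuine obstacle; the argument is bookkeeping.
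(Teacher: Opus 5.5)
Your proposal is correct and follows the paper's argument exactly. You deduce the corollary from the chain-map identity $d_\mu\circ\Theta_p=\Theta_{p+1}\circ d_{\alpha_\mu}$ supplied by Theorem \ref{th-properadic-deformation-complex}, together with the degreewise bijectivity of $\Theta$. Your additional checks are the routine details the paper compresses into the word ``straightforward'': that $\Theta^{-1}$ also intertwines the differentials, and that both $H^0$ groups are plain kernels.
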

	\begin{proof}
		It is straightforward by $d_\mu \circ \Theta_p= \Theta_{p+1}\circ d_{\alpha_\mu}$ from Theorem \ref{th-properadic-deformation-complex}.
	\end{proof}

	\section{Double Lie algebra cohomology, skew-symmetric Rota-Baxter operator cohomology and cyclic cohomology}\label{sec-dgla-rb}
	We first construct the graded Lie algebra whose Maurer-Cartan elements are precisely  skew-symmetric Rota-Baxter operators on a symmetric Frobenius algebra.
	Then we  lift the correspondence between finite-dimensional
	double Lie algebra structures on a vector space $V$ and skew-symmetric
	Rota-Baxter operators  $\mr$ on the matrix algebra $A=\en(V)$
	from the level of algebraic structures to the level of their governing differential graded Lie algebras.
	Finally, we show that the cohomology of a double Lie algebra can be described as the cyclic cohomology of the associated descendent associative algebra $A_\mr$.

	\begin{definition} \cite{Bax}
		Let $(A,\cdot)$ be an associative algebra. A linear map $\mr: A\rightarrow A$ is called a \textbf{Rota-Baxter operator} if the following identity holds:
		\begin{equation}\label{eq-rb}
			\mr(x)\cdot \mr(y)=\mr(\mr(x)\cdot y+x\cdot \mr(y)),\qquad x,y\in A.
		\end{equation}
		A \textbf{Rota-Baxter algebra} is a triple $(A,\cdot,\mr)$ consisting of an algebra $(A,\cdot)$ and a Rota-Baxter operator $\mr$.
	\end{definition}
	Then $A$ carries another associative algebra structure with the product
	$$
	x\cdot_\mr y= \mr(x)\cdot  y+x\cdot  \mr(y),\qquad x,y\in A.
	$$
	The associative algebra $(A, \cdot_\mr)$ is called the \textbf{descendent associative algebra}, and is denoted by $A_\mr$. It is obvious by \eqref{eq-rb} that $\mr$ is an associative algebra homomorphism from $A_\mr$ to $A$:
	$$
	\mr (x\cdot_\mr y)=\mr(x) \cdot  \mr(y),\qquad x,y\in A.
	$$
	Moreover, $\mr$ is still a Rota-Baxter operator on the descendent associative algebra $A_\mr$.
	
	Recall that a  bilinear form $\langle\cdot,\cdot \rangle : A\otimes A\rightarrow \bfk$ on an associative algebra $A$ is  \textbf{invariant} if $\langle xy,z\rangle =\langle x,yz\rangle $ for $x,y,z \in A$.
	A \textbf{Frobenius algebra} $(A,\langle\cdot,\cdot\rangle ) $ is an associative algebra $A$ with a non-degenerate invariant bilinear form $\langle\cdot,\cdot\rangle$.  A Frobenius algebra $(A,\langle\cdot,\cdot\rangle)$ is symmetric if $\langle\cdot,\cdot\rangle$ is symmetric.
	
	Let $V$ be a finite-dimensional vector space over $\bfk$ and   $A:=\en(V)$ equipped with a non-degenerate symmetric invariant bilinear form   $\langle x,y\rangle:=\operatorname{tr}(xy)$ for $ x,y\in A$.
	Fix a linear isomorphism $\iota: \en(V)\rightarrow (\en(V))^*$ given by  $ \iota (x) (y)=\langle x,y \rangle$. For a linear map $\mr:\en(V)\rightarrow \en(V)$, its adjoint $\mr^T$ is determined
	by
	\begin{equation*}
		\langle\mr(x),y\rangle=\langle x,\mr^T(y)\rangle,
		\qquad x,y\in \en(V).
	\end{equation*}
	We call $\mr$ \textbf{skew-symmetric} if $\mr^T=-\mr$.

	Let $V$ be a  vector space, and let $\{e_i\}_{i=1}^{n }$ and $\{e_i^T\}_{i=1}^{n }$ be trace-dual bases of $\en(V)$.
	Recall  \cite{GK18}  the following explicit expression for a double Lie bracket in terms of operators:
	\begin{equation}\label{eq-dl-rb}
		\lc a,b\rc= \sum_{i=1}^n e_i(a)\otimes \mr(e_i^T)(b)=\sum_{i=1}^n \mr^T(e_i)(a)\otimes e_i^T(b), \qquad a,b\in V.
	\end{equation}
	Every linear map $V\otimes V\rightarrow V\otimes V$ is
	obtained uniquely in this way.  The expression is independent of the chosen trace-dual bases,
	since $\sum_i e_i\otimes e_i^T$ is the canonical Casimir tensor of the trace pairing.
	\begin{theorem} \cite{GK18}\label{th-dl-rb}
		Let $V$ be a finite-dimensional vector space with a double  bracket $\{\!\{ \cdot,\cdot\}\!\}$ determined by an operator $\mr:\en(V)\rightarrow \en(V)$ by \eqref{eq-dl-rb}. Then $V$ is a double Lie algebra if and only if $\mr$ is a skew-symmetric Rota-Baxter operator on $\en(V)$.
	\end{theorem}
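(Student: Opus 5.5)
We translate the two double Lie axioms separately into properties of $\mr$ through the formula \eqref{eq-dl-rb}. Everything is done in coordinates: fix a basis $\{v_\alpha\}$ of $V$, take the matrix units $E_{\alpha\beta}$ as a basis of $\en(V)$, and use the trace-dual basis $E_{\alpha\beta}^T=E_{\beta\alpha}$. Substituting into \eqref{eq-dl-rb} gives
\begin{equation*}
\lc v_\gamma,v_\delta\rc=\sum_{\alpha}v_\alpha\otimes \mr(E_{\gamma\alpha})(v_\delta).
\end{equation*}
So the components of the double bracket are exactly the matrix coefficients $\langle \mr(E_{\gamma\alpha}),E_{\epsilon\delta}\rangle$. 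This confirms that $\mr\mapsto\lc\cdot,\cdot\rc$ is a linear bijection. We can then treat the skew-symmetry axiom and the Jacobi axiom one at a time.

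\textbf{Skew-symmetry.} We compare $\lc a,b\rc$ with $-\lc b,a\rc^\sigma$ using the two expressions in \eqref{eq-dl-rb}. Applying $\sigma$ to the first expression for $\lc b,a\rc$ gives $\sum_i\mr(e_i^T)(a)\otimes e_i(b)$. The canonical tensor $\sum_i e_i\otimes e_i^T$ is symmetric, since the trace pairing is symmetric. Hence this equals $\sum_i \mr(e_i)(a)\otimes e_i^T(b)$. Comparing with the second expression $\sum_i \mr^T(e_i)(a)\otimes e_i^T(b)$, and using that the assignment is bijective, we get that skew-symmetry holds iff $\mr^T=-\mr$.

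\textbf{Jacobi identity.} We assume $\mr^T=-\mr$ and compute the three terms of the double Jacobiator on $a\otimes b\otimes c$. Expanding with \eqref{eq-dl-rb} gives
\begin{equation*}
\lc a,\lc b,c\rc\rc_L=\sum_{i,j}e_j(a)\otimes \mr(e_j^T)e_i(b)\otimes \mr(e_i^T)(c).
\end{equation*}
For the other two terms, we use the alternative forms in \eqref{eq-dl-rb} together with skew-symmetry. The aim is to rewrite every term so that the $c$-slot carries a plain basis element and the operator parts act on $a$ and $b$. With a suitable choice of representation, the three terms become, respectively:
\begin{itemize}
\item one containing $\mr(\mr(x)y)$;
\item one containing $\mr(x\mr(y))$;
\item one containing $-\mr(x)\mr(y)$.
\end{itemize}
Here $x,y$ range over the Casimir components.

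Formally, the key identity is that the map
\begin{equation*}
\Phi:\en(V)^{\otimes 3}\to\en(V^{\otimes 3}),\qquad x\otimes y\otimes z\mapsto x\otimes y\otimes z,
\end{equation*}
is an isomorphism. Moreover, contracting with Casimir elements turns a bilinear map $B:\en(V)^{\otimes2}\to\en(V)$ into a trilinear map on $V$. We show the double Jacobiator equals the image of
\begin{equation*}
(x,y)\mapsto \mr(x)\mr(y)-\mr(\mr(x)y+x\mr(y))
\end{equation*}
under such a faithful transform. The relevant identity is
\begin{equation*}
\sum_i e_i\otimes e_i^T\, u=\sum_i u\, e_i\otimes e_i^T
\end{equation*}
for the matrix-product Casimir, where the product acts in the appropriate slot. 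Faithfulness of the transform then gives that Jacobi holds iff $\mr$ is Rota-Baxter.

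The main obstacle is this Jacobi computation. It requires tracking the cyclic permutations $\sigma,\sigma^2$ and moving products between Casimir factors, via $e_i(a)\otimes e_i^T=\sum$ matrix-unit identities such as $\sum_\alpha E_{\gamma\alpha}\otimes E_{\alpha\delta}$. The first thing to try is doing it entirely with matrix units, where each term becomes an explicit index contraction. Checking the identity on basis vectors $v_\gamma\otimes v_\delta\otimes v_\epsilon$ then reduces to comparing coefficients of $\mr(E)E'$-type products.
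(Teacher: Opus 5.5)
Your strategy is the one the paper uses when it recovers this cited result from its own machinery: Proposition \ref{prop-dl-rb-cochain-identification} handles skew-symmetry, and \eqref{eq-double-jacobi-rb-torsion} handles the Jacobi identity. Your skew-symmetry half is complete and correct; it is a direct version of the degree-one case of that proposition. (There is a minor index slip: the coefficient of $v_\alpha\otimes v_\epsilon$ in $\lc v_\gamma,v_\delta\rc$ is $\langle \mr(E_{\gamma\alpha}),E_{\delta\epsilon}\rangle$. Bijectivity is unaffected.)

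The gap is the Jacobi half, which is the real content of the theorem. You never fix the transform $A^{\otimes 3}\to\Hom(A^{\otimes 2},A)$. The claim that the three Jacobi terms become $\mr(\mr(x)y)$, $\mr(x\mr(y))$ and $-\mr(x)\mr(y)$ is asserted ``with a suitable choice of representation'' rather than derived, and you yourself leave it as the open obstacle. As written, the proposal only establishes that skew-symmetry corresponds to $\mr^T=-\mr$.

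The missing step is short once organized correctly. It needs neither matrix-unit bookkeeping nor the identity $\sum_i e_i\otimes e_i^Tu=\sum_i ue_i\otimes e_i^T$. The three Jacobi terms are $\sigma^s(\mu\diamond\mu)\sigma^{-s}$ for $s=0,1,2$. Under $\en(V^{\otimes 3})\cong A^{\otimes 3}$, conjugation by $\sigma$ is just rotation of tensor factors. So everything is determined by the tensor $T=\sum_{i,j}e_j\otimes \mr(e_j^T)e_i\otimes\mr(e_i^T)$ from your first-term computation, together with its rotations $\sigma T$ and $\sigma^2T$.

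Apply the paper's contraction $\Psi_2(t_0\otimes t_1\otimes t_2)(x,y)=\langle t_1,x\rangle\langle t_0,y\rangle\, t_2$, which is bijective. Use only $\sum_i\langle e_i,z\rangle e_i^T=z=\sum_i\langle e_i^T,z\rangle e_i$ together with symmetry and invariance of the trace form. This gives
\[
\Psi_2(T)(x,y)=\mr(x\mr(y)),\qquad \Psi_2(\sigma T)(x,y)=\mr(x)\mr^T(y),\qquad \Psi_2(\sigma^2T)(x,y)=\mr^T\bigl(\mr^T(x)y\bigr).
\]
With $\mr^T=-\mr$ the sum is $\mr(x\mr(y))+\mr(\mr(x)y)-\mr(x)\mr(y)$, which is \eqref{eq-double-jacobi-rb-torsion}. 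Hence the Jacobi identity holds if and only if $\mr$ is Rota-Baxter. The computation also shows that skew-symmetry is used only in the second and third terms.

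Your listed order of terms is what one gets by taking the first tensor slot as the output instead. Either convention works, but one must be fixed and the three contractions actually carried out.
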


	\subsection{The correspondence between controlling dg Lie algebras of double Lie algebras and skew-symmetric Rota--Baxter operators}
	In  \cite{Das20}, Das constructed a graded Lie algebra whose Maurer-Cartan elements are $\mathcal{O}$-operators on an associative algebra with respect to its bimodule.
	We apply this   construction to the adjoint bimodule of a symmetric Frobenius algebra $(A,\langle\cdot,\cdot\rangle)$ and then identify its cyclic  part with the graded Lie algebra $(\mathcal{C}_{dL}^{\bullet}(V),[\cdot,\cdot]_{dL})$ when $A=\en(V)$.
	
	For $p\geq0$, with $\Hom(A^{\otimes0},A)=A$, define the
	\textbf{cyclic Rota-Baxter cochain space} by
	$$
	\mc^p_{rb}(A):= \left\{F\in\Hom (A^{\otimes p},A)\ \middle|\ \langle F(x_1,\ldots,x_p),x_0\rangle= (-1)^p\langle F(x_0,\ldots,x_{p-1}),x_p\rangle \right\}
	$$
	where $x_0,\ldots,x_p\in A$.
	Set $\mc_{rb}^{\bullet}(A):=\bigoplus_{p\geq0}\mc_{rb}^{p}(A)$ and assign
	degree $p$ to an element of $\mc_{rb}^{p}(A)$.  In particular,
	$\mc_{rb}^{0}(A)=A$.
	
	We next recall  \cite{Das20} the graded Lie bracket controlling Rota-Baxter operators.
	For $F\in\Hom(A^{\otimes p},A)$ and $G\in\Hom(A^{\otimes q},A)$ with
	$p,q\geq1$, define $F\circledcirc G\in\Hom(A^{\otimes(p+q)},A)$ by
	\begin{align*}
		(F\circledcirc G)(x_1,\ldots,x_{p+q}):=& \sum_{i=1}^{p}(-1)^{(i-1)q}
		F\bigl(x_1,\ldots,x_{i-1},  G(x_i,\ldots,x_{i+q-1})x_{i+q}, x_{i+q+1},\ldots,x_{p+q}\bigr)                                     \\
		&-\sum_{i=1}^{p}(-1)^{iq}F\bigl(x_1,\ldots,x_{i-1},  x_iG(x_{i+1},\ldots,x_{i+q}),x_{i+q+1},\ldots,x_{p+q}\bigr)                                     \\
		&+(-1)^{pq}F(x_1,\ldots,x_p)G(x_{p+1},\ldots,x_{p+q}).
	\end{align*}
	Set $      [F,G]_{rb}:=F\circledcirc G-(-1)^{pq}G\circledcirc F$.
	For $a\in A$ and  $F\in\Hom(A^{\otimes p},A)$, $p\geq1$, one has
	\begin{align}
		[F,a]_{rb}(x_1,\ldots,x_p):=&{\sum}_{i=1}^{p}F(x_1,\ldots,x_{i-1},ax_i-x_ia,x_{i+1},\ldots,x_p)                                               \notag\\
		& +F(x_1,\ldots,x_p)a-aF(x_1,\ldots,x_p).                 \label{eq-rb-degree-zero-2}
	\end{align}
	For $a,b\in A$, we define $[a,b]_{rb}=ab-ba$.
	
	\begin{theorem} \cite{Das20}\label{th-dgla-rb}
		The graded vector space $\bigoplus_{n\geq 0}\Hom(A^{\otimes n}, A )$ together with the above defined bracket $[\cdot,\cdot]_{rb}$ forms a graded Lie algebra. A linear map $\mr: A\rightarrow A$ is a Rota-Baxter operator on $A$ if and only if $\mr$ is a Maurer-Cartan element in $(\bigoplus_{n\geq 0}\Hom(A^{\otimes n}, A ),[\cdot,\cdot]_{rb} )$, that is, $\mr$ satisfies $[\mr,\mr]_{rb}=0$.
	\end{theorem}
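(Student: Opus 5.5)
The plan is to obtain $[\cdot,\cdot]_{rb}$ as a derived bracket, following Das's strategy; proving the graded Jacobi identity directly is not feasible. Let $\hat A:=A\oplus A$ be the semidirect product of $A$ with its adjoint bimodule. Its product is
$$
\hat\pi((x,u),(y,v))=(xy,\;xv+uy).
$$
Let $(\mathfrak L,[\cdot,\cdot]_G)$ be the Gerstenhaber graded Lie algebra $\bigoplus_{n\ge 0}\Hom(\hat A^{\otimes(n+1)},\hat A)$, with the usual shift. Then $\hat\pi$ is a Maurer--Cartan element: $[\hat\pi,\hat\pi]_G=0$. This is exactly associativity of $\hat A$, which holds because $A$ is a bimodule over itself.

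Consider the subspace $\mathfrak a\subset\mathfrak L$ of lifts of $F\in\Hom(A^{\otimes p},A)$. The lift $\hat F$ reads the second components $u_1,\dots,u_p$ and lands in the first component:
$$
\hat F((x_1,u_1),\dots,(x_p,u_p)):=(F(u_1,\dots,u_p),0).
$$
Each Gerstenhaber composition $\hat F\circ_i\hat G$ feeds a first-component value into an argument that only reads second components, so it vanishes. Hence $\mathfrak a$ is an abelian graded Lie subalgebra of $\mathfrak L$.

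Let $\mathfrak h$ be the complement spanned by maps that either output into the second component or depend on some first-component argument. I will check that $\mathfrak h$ is a graded Lie subalgebra and that $\hat\pi\in\mathfrak h$. The latter is immediate from the formula for $\hat\pi$: it outputs into the first component only through $xy$, which depends on first-component arguments. Voronov's higher derived brackets then give an $L_\infty$-structure on $\mathfrak a$ (with the shift conventions absorbed in the degree assignment $p$), with $k$-ary brackets
$$
\ell_k(a_1,\dots,a_k)=P\,[\cdots[[\hat\pi,a_1]_G,a_2]_G,\dots,a_k]_G ,
$$
where $P:\mathfrak L\to\mathfrak a$ is the projection along $\mathfrak h$.

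The next step is to show that only $\ell_2$ survives. This follows from a bigrading (weight) count. Give each map a weight equal to the number of first-component inputs used minus the number of first-component outputs. Elements of $\mathfrak a$ have weight $-1$ and $\hat\pi$ has weight $+1$, while $P$ selects weight $-1$ maps that use only second-component inputs. It follows that $\ell_1=0$ and $\ell_k=0$ for $k\ge 3$. Since the $L_\infty$ relations with $\ell_1=\ell_3=0$ reduce to the Jacobi identity for $\ell_2$, the pair $(\mathfrak a,\ell_2)$ is a graded Lie algebra.

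It remains to unwind $\ell_2(\hat F,\hat G)=\pm[[\hat\pi,\hat F]_G,\hat G]_G$ and identify it with $[F,G]_{rb}$. The three families of terms in $F\circledcirc G$ correspond to three kinds of insertion. The first family inserts $G$ into the left slot of $\hat\pi$, producing $G(\cdots)x$. The second inserts $G$ into the right slot of $\hat\pi$, producing $xG(\cdots)$. The third multiplies the outputs of $F$ and $G$ by $\hat\pi$, producing $F(\cdots)G(\cdots)$. The degree-$0$ formulas (\ref{eq-rb-degree-zero-2}) and $[a,b]_{rb}=ab-ba$ arise in the same way, with $a\in A$ viewed as a $0$-ary element of $\mathfrak a$; they can also be checked directly as the adjoint action by derivations, just as was done for $D_h$ in Theorem \ref{th-properadic-deformation-complex}.

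I expect the main obstacle to be this sign bookkeeping: matching Voronov's sign and suspension conventions with the signs $(-1)^{(i-1)q}$, $(-1)^{iq}$ and $(-1)^{pq}$ in the definition of $\circledcirc$. I would fix the conventions by testing the case $p=q=1$ first.

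For the Maurer--Cartan statement, take $p=q=1$ and $F=G=\mr$ in the definition of $\circledcirc$. This gives
$$
(\mr\circledcirc\mr)(x,y)=\mr(\mr(x)y)+\mr(x\mr(y))-\mr(x)\mr(y),
$$
and $[\mr,\mr]_{rb}=2\,\mr\circledcirc\mr$ because $(-1)^{pq}=-1$. Since $\operatorname{char}\bfk=0$, the equation $[\mr,\mr]_{rb}=0$ is equivalent to \eqref{eq-rb}.
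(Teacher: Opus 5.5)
Your derived-bracket argument is correct and is essentially the proof the paper relies on. The paper gives no proof of its own and cites Das, who (following Uchino) obtains $[\cdot,\cdot]_{rb}$ in exactly this way: from the semidirect-product multiplication $\hat\pi$ on $A\oplus A$, acting on the abelian subalgebra of lifts $\hat F$ in the Gerstenhaber algebra. For the record, three remarks on what you left open:

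\begin{enumerate}
\item The sign you defer comes out exactly as $[F,G]_{rb}=(-1)^p[[\hat\pi,\hat F]_G,\hat G]_G$.
\item This also holds when $p=0$ or $q=0$, provided you enlarge $\mathfrak L$ by its arity-zero part $\hat A$ in degree $-1$.
\item Since $\mathfrak a$ is exactly the weight $-1$ part, this double bracket already lies in $\mathfrak a$. So neither the projection $P$ nor Voronov's higher brackets are needed; the ordinary derived bracket of $[\hat\pi,\cdot]_G$ suffices.
\end{enumerate}
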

	\begin{theorem}\label{th-dgla-ssrb}
		Let $(A,\langle\cdot,\cdot\rangle)$ be a symmetric Frobenius algebra. For all $p,q\geq0$, one has
		$[\mc_{rb}^{p}(A),\mc_{rb}^{q}(A)]_{rb}\subseteq\mc_{rb}^{p+q}(A)$.
		Consequently,
		$(\mc_{rb}^{\bullet}(A),[\cdot,\cdot]_{rb})$    is a graded Lie subalgebra of   $(\bigoplus_{n\geq 0}\Hom(A^{\otimes n}, A ),[\cdot,\cdot]_{rb} )$.
	\end{theorem}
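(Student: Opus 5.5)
The plan is to reformulate the cyclicity condition scalarly and prove closure by a reindexing argument on the scalar expansion of $F\circledcirc G$. For $F\in\Hom(A^{\otimes p},A)$ put $\widehat F(x_0,x_1,\ldots,x_p):=\langle F(x_1,\ldots,x_p),x_0\rangle$. By non-degeneracy, $F\mapsto\widehat F$ is a bijection onto $(p+1)$-linear forms. Moreover, $F\in\mc_{rb}^p(A)$ exactly when
\[
\widehat F(x_0,x_1,\ldots,x_p)=(-1)^p\,\widehat F(x_p,x_0,\ldots,x_{p-1}),
\]
that is, when $\widehat F\circ\tau=(-1)^p\widehat F$ for the cyclic rotation $\tau$ of the $p+1$ arguments. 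So it suffices to show that $\widehat{[F,G]_{rb}}$ has this cyclic sign property with respect to $p+q+1$ arguments.

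I would first dispose of the cases involving degree $0$ separately. For $a,b\in A$ the claim is vacuous. For $a\in A=\mc^0_{rb}(A)$ and $F\in\mc^p_{rb}(A)$ with $p\geq1$, I would use \eqref{eq-rb-degree-zero-2} together with invariance and symmetry of the pairing. These give
\[
\langle F(x)a-aF(x),x_0\rangle=\widehat F(ax_0-x_0a,x_1,\ldots,x_p),
\]
so $\widehat{[F,a]_{rb}}$ equals
\[
\sum_{i=0}^{p}\widehat F(x_0,\ldots,ax_i-x_ia,\ldots,x_p).
\]
This sum is the derivation action of $\ad_a$ on all $p+1$ slots. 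It visibly commutes with $\tau$, so cyclicity is preserved.

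For $p,q\geq1$ I would expand $\widehat{F\circledcirc G}(x_0,\ldots,x_{p+q})$ and rewrite every term through invariance and symmetry so that $G$ appears only through $\widehat G$-type expressions. The key identities are
\[
\langle F(\ldots),x_0\rangle\text{ with }G(\ldots)x_{i+q}\text{ inserted}, \qquad \langle F(x)G(y),x_0\rangle=\langle F(x),G(y)x_0\rangle=\widehat F(G(y)x_0,x).
\]
After this rewriting, each of the $2p+1$ summands of $F\circledcirc G$ becomes a term in which $G$, applied to a block of $q$ consecutive arguments (read cyclically), is multiplied on the left or right by a neighbouring argument and placed in a slot of $\widehat F$. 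The point is that the final term $(-1)^{pq}F\cdot G$ is exactly the "wrap-around" term, corresponding to the block that contains position $x_0$. I would then organise $\widehat{F\circledcirc G}$ as a signed sum over cyclic positions $s\in\mathbb Z/(p+q+1)$ of a two-vertex expression, namely the $G$-block starting at position $s$ inserted into $\widehat F$ with a left or right neighbour. This is the Frobenius-algebra analogue of the operator $\operatorname{Cyc}_{p+q}$ used in the proof of Theorem \ref{th-dgla}.

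Next I would verify, using $\widehat F\circ\tau=(-1)^p\widehat F$ and $G\circ(\text{rotation})$-cyclicity in the form $\langle G(y_1,\ldots,y_q),y_0\rangle=(-1)^q\langle G(y_0,\ldots,y_{q-1}),y_q\rangle$, that rotating all arguments by $\tau$ permutes these two-vertex terms among themselves. The goal is that this permutation contributes the uniform sign $(-1)^{p+q}$, up to an error that is symmetric in $(F,p)\leftrightarrow(G,q)$ with sign $(-1)^{pq}$. Such an error cancels in $[F,G]_{rb}=F\circledcirc G-(-1)^{pq}G\circledcirc F$. I expect that $F\circledcirc G$ alone is already cyclic, apart from boundary contributions where the $G$-block crosses $x_0$, and that those boundary terms match the corresponding ones in $G\circledcirc F$.

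The main obstacle will be the sign bookkeeping at the boundary. Specifically, I must check that the $i=p$ terms of the first two sums combine with $(-1)^{pq}F(x)G(y)$, with the correct sign $(-1)^{(i-1)q}$ versus $(-1)^{iq}$, to produce precisely the rotated versions of the $i=1$ terms. If a direct check proves too messy, I would fall back on a structural route. Das's bracket is the derived bracket on the Gerstenhaber algebra of $A\ltimes A$, and via $\langle\cdot,\cdot\rangle$ we have $A\ltimes A\cong A\ltimes A^*$, the trivial extension, which is symmetric Frobenius. Cyclic cochains with respect to an invariant symmetric pairing form a Gerstenhaber Lie subalgebra, and one would check that the lift of $\mc^\bullet_{rb}(A)$ lands in that subalgebra.
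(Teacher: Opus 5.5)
You handle the degree-zero case exactly as the paper does. For $p,q\geq1$ your direction is right: scalarize, read each summand as a rotated two-vertex term, and let $G\circledcirc F$ supply what $F\circledcirc G$ lacks. The gap is that the decisive matching is left as an expectation, and the local picture you give of it would fail.

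Put $\varepsilon=(-1)^{p+q}$ and define
$a(x_0,\ldots,x_{p+q})=(-1)^{pq}\langle F(x_1,\ldots,x_p)G(x_{p+1},\ldots,x_{p+q}),x_0\rangle$ and
$b(x_0,\ldots,x_{p+q})=-\langle G(x_1,\ldots,x_q)F(x_{q+1},\ldots,x_{p+q}),x_0\rangle$.
For $r\in\mathbb{Z}/(p+q+1)$ set $A_r=\varepsilon^r a\circ\sigma_{p+q+1}^{-r}$ and $B_r=\varepsilon^r b\circ\sigma_{p+q+1}^{-r}$, so that $A_r\circ\sigma_{p+q+1}^{-1}=\varepsilon A_{r+1}$, and likewise for $B_r$.

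Cyclicity of $\widehat F$ turns the $2p+1$ summands of $\widehat{F\circledcirc G}$ into $A_0$, $A_{q+1},\ldots,A_{q+p}$ and $B_1,\ldots,B_p$. These are windows of $p+1$ and $p$ consecutive members of two orbits of length $p+q+1$. (The product term is the rotation in which $x_0$ is the right multiplier of the $G$-block, not inside it.) So no check internal to $F\circledcirc G$ can close the orbits.

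In particular, the check you single out cannot succeed, namely the $i=p$ terms together with $(-1)^{pq}F(x)G(y)$ reproducing the rotated $i=1$ terms. One rotation step sends $A_0\mapsto A_1$ and $B_p\mapsto B_{p+1}$, and neither image is a summand of $F\circledcirc G$, since $x_0$ lies inside the $G$-block. Meanwhile $A_{q+1}$ and $B_1$ are hit only by $A_q$ and $B_0$.

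The missing idea is this: by cyclicity of $\widehat G$, the $2q+1$ summands of $-(-1)^{pq}\widehat{G\circledcirc F}$ are exactly the $2q+1$ missing rotations $B_0$, $B_{p+1},\ldots,B_{p+q}$, $A_1,\ldots,A_q$. Hence $\widehat{[F,G]_{rb}}=\sum_r(A_r+B_r)$, which is visibly cyclic with sign $\varepsilon$. This term-by-term matching, with its four parity checks, is the whole content of the paper's proof.

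Once you have these identifications, your defect formulation becomes a valid variant. One computes
$\widehat{F\circledcirc G}\circ\sigma_{p+q+1}^{-1}-\varepsilon\,\widehat{F\circledcirc G}=\varepsilon\,(A_1-A_{q+1}+B_{p+1}-B_1)$.
Exchanging $(F,p)\leftrightarrow(G,q)$ sends $a\mapsto-(-1)^{pq}b$ and $b\mapsto-(-1)^{pq}a$, so this defect is multiplied by $(-1)^{pq}$ and cancels in $[F,G]_{rb}$. It still requires the same identification of the summands of both $\widehat{F\circledcirc G}$ and $\widehat{G\circledcirc F}$ as signed rotations of $a$ and $b$.

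Your fallback through $A\ltimes A\cong A\ltimes A^{*}$ is sound in principle:
\begin{itemize}
\item a lift of a cyclic $F$ is cyclic for the canonical pairing of the trivial extension;
\item the product of the trivial extension is cyclic;
\item the derived bracket of two lifts is again a lift.
\end{itemize}
However, closure of cyclic cochains under the Gerstenhaber bracket is proved by the same orbit count, now with a single orbit. You would also have to match Das's sign conventions with the derived bracket. So the fallback gives you a citable reference rather than avoiding the computation.
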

	\begin{proof}
		For $F\in\Hom(A^{\otimes p},A)$, define the associated
		$(p+1)$-linear form
		$$
		\widehat F(x_0,\ldots,x_p):=\langle F(x_1,\ldots,x_p),x_0\rangle,\qquad x_0,\dots ,x_p\in A.
		$$
		By definition, $    F\in\mc_{rb}^{p}(A)$    if and only if $    \widehat F\circ\sigma_{p+1} =   (-1)^p\widehat F$.
		
		We first consider the case where one of the two cochains has degree
		zero. For $F\in\mc_{rb}^{p}(A)$, $p\geq1$, and   $a\in A$, we have
		\begin{align*}
			\langle    F(x_1,\ldots,x_p)a-aF(x_1,\ldots,x_p),x_0\rangle=\langle F(x_1,\ldots,x_p),ax_0-x_0a \rangle.
		\end{align*}
		It follows that
		\begin{align}
			\widehat{[F,a]_{rb}}(x_0,\ldots,x_p)=\sum_{i=0}^{p}\widehat F(x_0,\ldots,x_{i-1},[a,x_i],x_{i+1},\ldots,x_p),
			\label{eq-degree-zero-cyclic-action}
		\end{align}
		where $[a,x_i]=ax_i-x_i a$. Thus $\widehat {[F,a]_{rb}}\circ\sigma_{p+1}=(-1)^p\widehat {[F,a]_{rb}}$ since $   \widehat F\circ\sigma_{p+1} =   (-1)^p\widehat F$, and then we have $[F,a]_{rb}\in\mc_{rb}^{p}(A)$.
		Since   $[a,F]_{rb}=-[F,a]_{rb}$, the same conclusion holds for $[a,F]_{rb}$. The case $p=q=0$ follows from $[a,b]_{rb}=ab-ba\in A=\mc_{rb}^0(A)$.
		
		Suppose now that $p,q\geq1$.    For $F\in\mc_{rb}^{p}(A)$ and   $G\in\mc_{rb}^{q}(A)$, we claim that
		\begin{equation}\label{eq-RB-bracket-cyclic-symmetrization}
			\widehat{[F,G]_{rb}}(x_0,\ldots,x_{p+q}) =\sum_{r=0}^{p+q}(-1)^{(p+q)r}\theta_{F,G}(\sigma_{p+q+1}^{-r}(x_0,\ldots,x_{p+q})).
		\end{equation}
		where
		\begin{align*}
			\theta_{F,G}(x_0,\ldots,x_{p+q}):=&(-1)^{pq}\langle F(x_1,\ldots,x_p)G(x_{p+1},\ldots,x_{p+q}),x_0\rangle\\
			&-\langle G(x_1,\ldots,x_q)F(x_{q+1},\ldots,x_{p+q}),x_0\rangle.
		\end{align*}
		
		In the  following calculation, the subscripts of the $x_i$ are read modulo $p+q+1$ and in cyclic order.
		For $0\leq r\leq p+q$, set
		\begin{align*}
			A_r:=&(-1)^{(p+q)r+pq}\langle   F(x_{r+1},\ldots,x_{r+p}) G(x_{r+p+1},\ldots,x_{r+p+q}),x_r\rangle,\\
			B_r:=&-(-1)^{(p+q)r} \langle G(x_{r+1},\ldots,x_{r+q})F(x_{r+q+1},\ldots,x_{r+p+q}),x_r\rangle.
		\end{align*}
		Then $  A_r+B_r =(-1)^{(p+q)r}  \theta_{F,G}\bigl( \sigma_{p+q+1}^{-r}(x_0,\ldots,x_{p+q}))$.
		
		The two product terms in $[F,G]_{rb}$ give $A_0+B_0$.
		
		We next identify the insertion terms.  Let $1\leq i\leq p$.
		Using the cyclicity of $\widehat F$ $i$ times,  we obtain
		\begin{align*}
			&(-1)^{(i-1)q}\langle F (x_1,\dots,x_{i-1}, G(x_i,\dots,x_{i+q-1})x_{i+q},x_{i+q+1},\dots,x_{p+q}),x_0\rangle\\
			=&(-1)^{(i-1)q+pi}\langle F(x_{i+q+1},\dots,x_{i-1})G(x_i,\dots,x_{i+q-1}),x_{i+q}\rangle=A_{q+i}.
		\end{align*}
		Indeed, $   (i-1)q+pi \equiv (p+q)(q+i)+pq\pmod 2$.
		Similarly,
		\begin{align*}
			&-(-1)^{iq}\langle  F (x_1,\ldots,x_{i-1},x_iG(x_{i+1},\ldots,x_{i+q}), x_{i+q+1},\ldots,x_{p+q}),x_0\rangle\\
			=&-(-1)^{iq+pi}\langle  G(x_{i+1},\ldots,x_{i+q})F(x_{i+q+1},\ldots,x_{i-1}),x_i\rangle=B_i.
		\end{align*}
		
		Let now $1\leq j\leq q$.  Applying the same argument to the
		insertion terms coming from $G\circledcirc F$, we get
		\begin{align*}
			&-(-1)^{pq+(j-1)p}\langle G (x_1,\ldots,x_{j-1}, F(x_j,\ldots,x_{j+p-1})x_{j+p}, x_{j+p+1},\ldots,x_{p+q}),x_0\rangle\\
			=&-(-1)^{pq+(j-1)p+qj}\langle G(x_{j+p+1},\ldots,x_{j-1}) F(x_j,\ldots,x_{j+p-1}),x_{j+p}\rangle=B_{p+j},
		\end{align*}
		where $ pq+(j-1)p+qj\equiv (p+q)(p+j)\pmod2$,
		and
		\begin{align*}
			&(-1)^{pq+jp}\langle G (x_1,\ldots,x_{j-1},x_jF(x_{j+1},\ldots,x_{j+p}),x_{j+p+1},\ldots,x_{p+q}),x_0\rangle\\
			=&(-1)^{pq+jp+qj}\langle        F(x_{j+1},\ldots,x_{j+p})G(x_{j+p+1},\ldots,x_{j-1}),x_j\rangle=A_j.
		\end{align*}
		
		The terms $A_j$ with $1\leq j\leq q$ and $A_{q+i}$ with
		$1\leq i\leq p$ exhaust $A_1,\ldots,A_{p+q}$.  Likewise, the terms
		$B_i$ with $1\leq i\leq p$ and $B_{p+j}$ with
		$1\leq j\leq q$ exhaust $B_1,\ldots,B_{p+q}$.  The preceding
		identities, together with the two product terms $A_0+B_0$, therefore
		prove \eqref{eq-RB-bracket-cyclic-symmetrization}.
		
		It remains to verify cyclicity.  Set $\varepsilon:=(-1)^{p+q}$.  By
		\eqref{eq-RB-bracket-cyclic-symmetrization},
		\begin{align*}
			\widehat{[F,G]_{rb}}\circ\sigma_{p+q+1}^{-1}=\sum_{r=0}^{p+q}\varepsilon^r  \theta_{F,G}\circ\sigma_{p+q+1}^{-(r+1)}
			=\varepsilon\sum_{s=1}^{p+q+1}\varepsilon^s \theta_{F,G}\circ\sigma_{p+q+1}^{-s}=\varepsilon \widehat{[F,G]_{rb}}.
		\end{align*}
		In the last equality, we used
		$\sigma_{p+q+1}^{-(p+q+1)}=\id$ and
		$\varepsilon^{p+q+1}=(-1)^{(p+q)(p+q+1)}=1$.  Thus,
		$$
		\widehat{[F,G]_{rb}}\circ\sigma_{p+q+1}^{-1}
		=(-1)^{p+q}\widehat{[F,G]_{rb}},
		$$
		which means that $[F,G]_{rb}\in\mc_{rb}^{p+q}(A)$.
		
		Hence, $\mc_{rb}^{\bullet}(A)$ is closed under
		$[\cdot,\cdot]_{rb}$ and then   $(\mc_{rb}^{\bullet}(A),[\cdot,\cdot]_{rb})$ is a graded Lie subalgebra of  $(\bigoplus_{n\geq 0}\Hom(A^{\otimes n}, A ),[\cdot,\cdot]_{rb} )$.
	\end{proof}
	
	\begin{corollary}\label{cor-mc-ssrb}
		A linear map $\mr:A\rightarrow A$ is a   skew-symmetric Rota-Baxter operator  on a symmetric Frobenius algebra $(A,\langle\cdot,\cdot\rangle)$ if and only if $\mr $ is a Maurer-Cartan element in $(\mc_{rb}^\bullet (A),[\cdot,\cdot]_{rb}) $.
	\end{corollary}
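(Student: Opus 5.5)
The plan is to combine Theorem \ref{th-dgla-rb} and Theorem \ref{th-dgla-ssrb} with a direct reading of the membership condition for $\mc_{rb}^1(A)$.

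First I will unwind what it means for $\mr\in\Hom(A,A)$ to lie in $\mc_{rb}^1(A)$. With $p=1$, the defining condition is
\[
\langle \mr(x_1),x_0\rangle=-\langle \mr(x_0),x_1\rangle \qquad \text{for all } x_0,x_1\in A.
\]
By symmetry of the pairing, the left-hand side equals $\langle x_0,\mr(x_1)\rangle$. The definition of the adjoint gives
\[
\langle \mr(x_0),x_1\rangle=\langle x_0,\mr^T(x_1)\rangle .
\]
So the condition says $\langle x_0,(\mr+\mr^T)(x_1)\rangle=0$ for all $x_0,x_1$. By non-degeneracy this is equivalent to $\mr^T=-\mr$. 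Thus $\mr\in\mc_{rb}^1(A)$ if and only if $\mr$ is skew-symmetric. The adjoint $\mr^T$ is defined here for a general symmetric Frobenius algebra by the same formula used for $\en(V)$, and it exists by non-degeneracy.

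Next I will handle the Maurer-Cartan equation. Since the graded Lie algebra $(\mc_{rb}^\bullet(A),[\cdot,\cdot]_{rb})$ has zero differential, a Maurer-Cartan element is a degree $1$ element $\mr\in\mc_{rb}^1(A)$ with $[\mr,\mr]_{rb}=0$. By Theorem \ref{th-dgla-ssrb}, the bracket on $\mc_{rb}^\bullet(A)$ is the restriction of Das's bracket on $\bigoplus_{n\geq0}\Hom(A^{\otimes n},A)$. Hence $[\mr,\mr]_{rb}$ is the same element whether computed in the subalgebra or in the ambient algebra. By Theorem \ref{th-dgla-rb}, this element vanishes exactly when $\mr$ is a Rota-Baxter operator.

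Combining the two steps: $\mr$ is a Maurer-Cartan element of $\mc_{rb}^\bullet(A)$ if and only if it is skew-symmetric and Rota-Baxter. There is no real obstacle here. The only point needing care is the sign bookkeeping in the $p=1$ cyclic condition, where both symmetry of the form and the definition of $\mr^T$ are used.
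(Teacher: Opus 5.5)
Your proposal is correct and follows essentially the same route as the paper. Symmetry and non-degeneracy of the pairing identify membership in $\mc_{rb}^1(A)$ with $\mr^T=-\mr$, and Theorem \ref{th-dgla-rb} together with the subalgebra statement of Theorem \ref{th-dgla-ssrb} turns $[\mr,\mr]_{rb}=0$ into the Rota-Baxter identity; you spell out the sign check and both directions a bit more explicitly than the paper does.
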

	\begin{proof}
		Since  $\mr\in\mc_{rb}^{1}(A)$ is equivalent to $
		\langle\mr(x),y\rangle=-\langle x,\mr(y)\rangle$ for $x,y\in A$,
		$\mr$ is skew-symmetric with respect to
		$\langle\cdot,\cdot\rangle$ if and only if
		$\mr\in\mc_{rb}^{1}(A)$. Then by Theorem \ref{th-dgla-rb} and
		Theorem \ref{th-dgla-ssrb},  a Maurer-Cartan element $\mr\in
		\mc_{rb}^1(A)$ defines a skew-symmetric Rota-Baxter operator on
		$A$.
		
	\end{proof}
	We next compare the above construction with $\mathcal{C}_{dL}^{\bullet}(V)$ when $A=\en(V)$.
	For every $p\geq 1$, let $\rho_{p} : A^{\otimes p}\rightarrow \en (V^{\otimes p})$ be the canonical isomorphism defined by
	$$
	\rho_{p} (a_1\otimes \cdots \otimes a_p)(v_1\otimes\cdots\otimes v_p)=a_1(v_1)\otimes\cdots\otimes a_p(v_p),
	$$
	for $a_1,\dots ,a_{p} \in A$ and  $v_1,\dots,v_p\in V$.
	For $P\in\en(V^{\otimes p})$, write  $\we P=\rho_p^{-1}(P)\in A^{\otimes p}$. It is obvious that for all $P,Q\in\en (V^{\otimes p})$,  we have $       \we{P\circ Q}=\we{P} \we{Q} $.
	\begin{lemma}
		For all $P\in \en (V^{\otimes m}) $, $m\geq 1$, we have
		$$
		\sigma_{m} P\sigma_{m}^{-1}(v_1,\dots,v_{m})=(\sigma_{m} \we{P} )(v_1,\dots,v_m),\qquad v_1,\dots, v_m\in V.
		$$
		Consequently, $P\in \mathcal{C}_{dL}^p(V)$ if and only if $\sigma_{p+1} \we {P}=(-1)^p \we{P}$.
	\end{lemma}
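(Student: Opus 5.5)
Since both sides are linear in $P$ and $\rho_m$ is a linear isomorphism, it suffices to check the identity on pure tensors $\we P=a_1\otimes\cdots\otimes a_m$, i.e.\ on $P=\rho_m(a_1\otimes\cdots\otimes a_m)$. Here $\sigma_m$ acting on $A^{\otimes m}$ is the cyclic permutation of the tensor factors given in the Notation, $\sigma_m(a_1\otimes\cdots\otimes a_m)=a_m\otimes a_1\otimes\cdots\otimes a_{m-1}$. The right-hand side is therefore to be read as $\rho_m(\sigma_m\we P)$ evaluated on $v_1\otimes\cdots\otimes v_m$.

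For the left-hand side, first compute $\sigma_m^{-1}(v_1\otimes\cdots\otimes v_m)=v_2\otimes\cdots\otimes v_m\otimes v_1$. Applying $P$ gives $a_1(v_2)\otimes\cdots\otimes a_{m-1}(v_m)\otimes a_m(v_1)$. Applying $\sigma_m$ then gives
\begin{equation*}
a_m(v_1)\otimes a_1(v_2)\otimes\cdots\otimes a_{m-1}(v_m),
\end{equation*}
which is exactly $\rho_m(a_m\otimes a_1\otimes\cdots\otimes a_{m-1})(v_1\otimes\cdots\otimes v_m)=\rho_m(\sigma_m\we P)(v_1,\dots,v_m)$. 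Thus $\sigma_mP\sigma_m^{-1}=\rho_m(\sigma_m\we P)$, equivalently $\we{\sigma_mP\sigma_m^{-1}}=\sigma_m\we P$.

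For the consequence, take $m=p+1$. By definition, $P\in\mathcal C_{dL}^p(V)$ means $\sigma_{p+1}P\sigma_{p+1}^{-1}=(-1)^pP$. Applying the bijective linear map $\rho_{p+1}^{-1}$ and the identity just proved, this is equivalent to $\sigma_{p+1}\we P=(-1)^p\we P$.

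The only real obstacle is bookkeeping: one must keep the directions of $\sigma_m$ versus $\sigma_m^{-1}$ consistent, so that the conjugation on $V^{\otimes m}$ matches the forward cyclic shift on $A^{\otimes m}$ rather than its inverse. I would guard against this by writing out the case $m=2$ explicitly as a check.
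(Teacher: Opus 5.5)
Your proposal is correct and matches the paper's proof: the paper also writes $\widetilde P$ as a sum of elementary tensors, evaluates $\sigma_m P\sigma_m^{-1}$ on $v_1\otimes\cdots\otimes v_m$ to obtain $P_m(v_1)\otimes P_1(v_2)\otimes\cdots\otimes P_{m-1}(v_m)$, and then transports the defining condition of $\mathcal{C}_{dL}^p(V)$ through $\rho_{p+1}$. The only difference is that you first reduce to pure tensors by linearity, whereas the paper carries the summation index $\alpha$ throughout.
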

	\begin{proof}
 
		For $\widetilde{P}=\sum_{\alpha}P_1^{\alpha}\otimes\cdots\otimes P_m^{\alpha} \in A^{\otimes m}$ and $\rho_m(\widetilde{P})=P$, we have
		\begin{align*}
			\sigma_{m}P\sigma_{m}^{-1}(v_1,\dots,v_{m})=&\sigma_{m}(\sum_{\alpha}P_1^{\alpha}(v_2)\otimes\cdots\otimes P_{m-1}^\alpha (v_m)\otimes P_m^{\alpha}(v_1))\\
			=&\sum_{\alpha} P_m^{\alpha}(v_1)\otimes P_1^{\alpha}(v_2)\otimes\cdots\otimes P_{m-1}^\alpha (v_m)\\
			=&(\sigma_{m} \we{P} )(v_1,\dots,v_m),\qquad v_1,\dots, v_m\in V.
		\end{align*}
		Since $P\in \mathcal{C}_{dL}^p(V)$ is equivalent to $\sigma_{p+1} P \sigma_{p+1}^{-1}=(-1)^p P$, we obtain
		$\sigma_{p+1} \we {P}=(-1)^p \we{P}$.
	\end{proof}
	
	For $P\in\en(V^{\otimes(p+1)})$, write
	$\widetilde P=\rho_{p+1}^{-1}(P)
	=\sum_{\alpha}P_0^\alpha\otimes\cdots\otimes P_p^\alpha
	\in A^{\otimes(p+1)}$.   We  define
	\begin{equation}\label{eq-Psi-definition}
		\begin{split}
			\Psi_p(P)(x_1,\ldots,x_p):=\sum_{\alpha}            \langle P_{p-1}^{\alpha},x_1\rangle \langle P_{p-2}^{\alpha},x_2\rangle\cdots           \langle P_0^{\alpha},x_p\rangle P_p^{\alpha},\qquad x_1,\dots,x_p\in A.
		\end{split}
	\end{equation}
	
	For $p=0$, this means $\Psi_0(P)=P$.  The reverse order of the contractions
	in \eqref{eq-Psi-definition} is essential for compatibility with the two
	graded Lie brackets.  Equivalently,
	\begin{equation}\label{eq-Psi-intrinsic}
		\langle\Psi_p( P)(x_1,\ldots,x_p),x_0 \rangle    = \langle \we P,x_p\otimes x_{p-1}\otimes\cdots\otimes x_1\otimes x_0       \rangle.
	\end{equation}
	This intrinsic formula shows in particular that $\Psi_p$ is independent of
	the chosen decomposition of $\widetilde P$ into elementary tensors.
	
	\begin{proposition}\label{prop-dl-rb-cochain-identification}
		For every $p\geq0$, the map $\Psi_p$ restricts to a vector space    isomorphism from $ \mathcal{C}_{dL}^{p}(V)$ to $    \mc_{rb}^{p}(A)$.
		In degree one, if $\we \mu=\sum_i e_i\otimes\mr(e_i^T)$ where $\{e_i\}_{i=1}^n$ is a linear basis of $A$ and  $\{e^T_i\}_{i=1}^n$ is the corresponding dual basis relative to the trace form,
		then $\Psi_1(\mu)=\mr$.
	\end{proposition}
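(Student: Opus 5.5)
We will work throughout with the intrinsic description \eqref{eq-Psi-intrinsic} rather than the coordinate formula \eqref{eq-Psi-definition}. By nondegeneracy of the trace pairing on $A$, and hence of the induced pairing on $A^{\otimes(p+1)}$, the assignment $F\mapsto \widehat F$, with $\widehat F(x_0,\ldots,x_p)=\langle F(x_1,\ldots,x_p),x_0\rangle$, is an isomorphism $\Hom(A^{\otimes p},A)\cong (A^{\otimes(p+1)})^*$. Likewise $\we P\mapsto\langle\we P,\cdot\rangle$ is an isomorphism $A^{\otimes(p+1)}\cong(A^{\otimes(p+1)})^*$. By \eqref{eq-Psi-intrinsic}, $\Psi_p$ is the composite of $\rho_{p+1}^{-1}$, the isomorphism $A^{\otimes(p+1)}\to(A^{\otimes(p+1)})^*$, precomposition with the order reversal $\tau:x_0\otimes\cdots\otimes x_p\mapsto x_p\otimes\cdots\otimes x_0$, and the inverse of $F\mapsto\widehat F$. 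Hence $\Psi_p$ is a linear isomorphism $\en(V^{\otimes(p+1)})\to\Hom(A^{\otimes p},A)$, by dimension count or directly since each factor is invertible. For $p=0$ this is immediate since $\Psi_0(P)=P$ and $\mc_{rb}^0(A)=A=\mathcal C^0_{dL}(V)$.

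It then remains to show that $\Psi_p$ maps $\mathcal C_{dL}^p(V)$ onto $\mc_{rb}^p(A)$, that is, that the two cyclic conditions correspond. By the preceding lemma, $P\in\mathcal C^p_{dL}(V)$ iff $\sigma_{p+1}\we P=(-1)^p\we P$. On the other side, $F\in\mc^p_{rb}(A)$ iff $\widehat F\circ\sigma_{p+1}=(-1)^p\widehat F$, as in the proof of Theorem \ref{th-dgla-ssrb}. With $F=\Psi_p(P)$ we have $\widehat F(x)=\langle\we P,\tau x\rangle$. We compute
$$
\widehat F(\sigma x)=\langle \we P,\tau\sigma x\rangle=\langle\we P,\sigma^{-1}\tau x\rangle=\langle \sigma\we P,\tau x\rangle .
$$
Here we use two facts: the relation $\tau\sigma=\sigma^{-1}\tau$ in $\fraks_{p+1}$, and the invariance of the factorwise pairing under simultaneous permutation of tensor factors. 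Since $\tau$ is bijective, the condition $\widehat F\circ\sigma=(-1)^p\widehat F$ is equivalent to $\sigma\we P=(-1)^p\we P$. Thus $\Psi_p$ restricts to a bijection $\mathcal C_{dL}^p(V)\to\mc^p_{rb}(A)$.

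The step I expect to be the main obstacle is exactly this sign and orientation bookkeeping. One must check which of $\sigma$ and $\sigma^{-1}$ appears after conjugating by the reversal, and that the convention in \eqref{eq-Psi-intrinsic} matches the convention for $\sigma$ acting on $A^{\otimes m}$ from the lemma. Fortunately, the eigenvalue condition $\sigma\we P=(-1)^p\we P$ is equivalent to $\sigma^{-1}\we P=(-1)^p\we P$, because $((-1)^p)^{-1}=(-1)^p$. So even if the orientation comes out reversed, the conclusion is unaffected.

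For degree one, take $\we\mu=\sum_i e_i\otimes\mr(e_i^T)$, so $P_0=e_i$ and $P_1=\mr(e_i^T)$. Formula \eqref{eq-Psi-definition} then gives
$$
\Psi_1(\mu)(x)=\sum_i\langle e_i,x\rangle\,\mr(e_i^T)=\mr\Big(\sum_i\langle e_i,x\rangle e_i^T\Big)=\mr(x),
$$
since $\sum_i\langle e_i,x\rangle e_i^T=x$ is the expansion of $x$ in the dual basis. It remains to confirm that $\we\mu$ indeed has this form, i.e.\ that \eqref{eq-dl-rb} reads $\mu=\rho_2\big(\sum_i e_i\otimes\mr(e_i^T)\big)$. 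This is immediate from the definition of $\rho_2$.
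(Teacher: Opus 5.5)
Your proof is correct and follows essentially the same route as the paper. You obtain the isomorphism $\en(V^{\otimes(p+1)})\cong\Hom(A^{\otimes p},A)$ from nondegeneracy of the trace pairing. You match the two cyclicity conditions through \eqref{eq-Psi-intrinsic}, using $\langle\we P,\sigma^{-1}\tau x\rangle=\langle\sigma\we P,\tau x\rangle$, which is exactly the middle step in the paper's chain of equalities. The degree-one claim is the same dual-basis expansion; there your explicit use of nondegeneracy and bijectivity of the reversal makes the ``if and only if'' slightly more transparent than the paper's one-directional chain.
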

	\begin{proof}
		The nondegeneracy of the trace form  implies that $\Psi_p$ is a vector space isomorphism between $\en (V^{\otimes(p+1)})$ and $\Hom(A^{\otimes p},A)$.

		For any $P\in \mathcal{C}^p_{dL}(V)$ and $ x_0,\dots,x_p\in A$,  we have
		\begin{align*}
			&(-1)^p\langle \Psi_p(P)  (x_1,\ldots,x_p),x_0\rangle
			=(-1)^p\langle \we P,x_p\otimes x_{p-1}\otimes\cdots\otimes x_1\otimes x_0       \rangle\\
			=&\langle \sigma_{p+1} \we P, x_p\otimes \cdots\otimes x_0\rangle
			=\langle \Psi_p(P)(x_0,\dots,x_{p-1}),x_p\rangle,
		\end{align*}
		which means that
		$\sigma_{p+1}P\sigma_{p+1}^{-1}=(-1)^pP$ if and only if $\Psi_p (P)\in \mc_{rb}^p(A)$.
		This  proves that  $\Psi_p$ is an isomorphism between $\mathcal{C}^p_{dL}(V)$ and $\mc_{rb}^p(A)$.  Finally,    since $\{e_i\}$ and $\{e_i^T\}$ are dual bases, $       \Psi_1(\mu)(x)=\sum_i\langle e_i,x\rangle\mr(e_i^T)=\mr(x)$,
		which means that $\Psi_1(\mu)=\mr$.
	\end{proof}

	\begin{theorem}\label{th-dl-rb-dgla-isomorphism}
		Let $P\in\mathcal{C}_{dL}^{p}(V)$ and $Q\in\mathcal{C}_{dL}^{q}(V)$.  Then
		\begin{equation}\label{eq-Psi-bracket-intertwining}
			\Psi_{p+q}\bigl([P,Q]_{dL}\bigr)
			=[\Psi_p(P),\Psi_q(Q)]_{rb}.
		\end{equation}
		Consequently, $         \Psi:(\mathcal{C}_{dL}^{\bullet}(V),[\cdot,\cdot]_{dL})\rightarrow   (\mc_{rb}^{\bullet}(A),[\cdot,\cdot]_{rb}) $
		is an isomorphism of graded Lie algebras.
	\end{theorem}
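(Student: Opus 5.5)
Since $\Psi$ is already a degreewise linear isomorphism $\mathcal{C}_{dL}^p(V)\to\mc_{rb}^p(A)$ by Proposition \ref{prop-dl-rb-cochain-identification}, and both brackets are graded Lie brackets (Theorem \ref{th-dgla} and Theorem \ref{th-dgla-ssrb}), the only thing to prove is \eqref{eq-Psi-bracket-intertwining}. I would compare both sides after pairing with a test element $x_0$, i.e. compare the $(p+q+1)$-linear forms $\widehat{\Psi_{p+q}([P,Q]_{dL})}$ and $\widehat{[\Psi_p(P),\Psi_q(Q)]_{rb}}$, using the intrinsic formula \eqref{eq-Psi-intrinsic}. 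This converts the left side into the pairing of $\widetilde{[P,Q]_{dL}}\in A^{\otimes(p+q+1)}$ with the reversed tensor $x_{p+q}\otimes\cdots\otimes x_0$.

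The cases involving degree zero are handled first. If $P=h\in A=\en(V)$ and $q\geq1$, the proof of Theorem \ref{th-properadic-deformation-complex} already gives $[h,Q]_{dL}=D_h^{(q+1)}Q-QD_h^{(q+1)}$. Under $\rho_{q+1}^{-1}$ this is $\sum_i(\cdots\otimes[h,Q_i]\otimes\cdots)$, because $\widetilde{P\circ Q}=\widetilde P\widetilde Q$. Pairing with $x_q\otimes\cdots\otimes x_0$ and using invariance and symmetry of the trace form yields $-\sum_i\widehat{\Psi_q(Q)}(\ldots,[h,x_i],\ldots)$. By \eqref{eq-degree-zero-cyclic-action}, this is $\widehat{[h,\Psi_q(Q)]_{rb}}$, provided the signs work out (the sign convention $[a,F]_{rb}=-[F,a]_{rb}$ should absorb the minus). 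The case $p=q=0$ is immediate.

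For $p,q\geq1$, the key step is the identity
\[
\widehat{\Psi_{p+q}(P\diamond Q)}(x_0,\ldots,x_{p+q})=\theta\text{-type term},
\]
which I would compute by writing $\widetilde{P\diamond Q}=(\widetilde P\otimes 1^{\otimes q})(1^{\otimes p}\otimes\widetilde Q)$. The only nontrivial factor is the product $P_p^\alpha Q_0^\beta$ in the $(p+1)$-th slot. Pairing with the reversed tensor and using $\langle P_p^\alpha Q_0^\beta,x\rangle=\langle Q_0^\beta,xP_p^\alpha\rangle$ then lets me reassemble the result into products of values of $\Psi_p(P)$ and $\Psi_q(Q)$. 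The expected outcome is that $\widehat{\Psi(P\diamond Q)}$ equals, up to a sign $(-1)^{pq}$ or similar, one of the product terms $\langle F(\ldots)G(\ldots),x_0\rangle$ appearing in $\theta_{F,G}$, with $F=\Psi_p(P)$ and $G=\Psi_q(Q)$. The reversed order of contractions in \eqref{eq-Psi-definition} is what should make the product come out as $FG$ rather than its opposite.

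Next, conjugation by $\sigma^s$ on the double Lie side corresponds to precomposing the form with a cyclic rotation, by the lemma relating $\sigma P\sigma^{-1}$ to $\sigma\widetilde P$. So $\widehat{\Psi(P\square Q)}$ becomes the cyclic signed sum of the rotated product term. Subtracting $(-1)^{pq}Q\square P$ gives the cyclic sum of the second product term as well, and the result must match \eqref{eq-RB-bracket-cyclic-symmetrization}, namely $\sum_r(-1)^{(p+q)r}\theta_{F,G}\circ\sigma^{-r}$. This reduces the theorem to the single identity above plus a matching of rotation directions.

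The main obstacle is sign and orientation bookkeeping. I must confirm that the rotation $\sigma^s$ on the dL side corresponds to $\sigma^{-r}$ with the matching sign $(-1)^{(p+q)s}$, given the reversal in \eqref{eq-Psi-intrinsic}. I must also confirm that the $\diamond$ composition produces exactly the sign $(-1)^{pq}$ attached to $FG$ and $-1$ attached to $GF$. If the orientation comes out reversed, I would reindex $s\mapsto p+q+1-s$ and use the cyclic invariance \eqref{eq-cyc-conjugation}. I would sanity-check all signs on $p=q=1$ against Example \ref{ex-dl-element} together with Theorem \ref{th-dl-rb}, where $\mu\square\mu=0$ must correspond to $[\mr,\mr]_{rb}=0$.
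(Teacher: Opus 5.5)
Your proposal is correct, but it is organized differently from the paper's proof. For $p,q\geq1$ the paper computes $\Psi_{p+q}$ of each of the $2(p+q+1)$ cyclic conjugates $\sigma^s(P\diamond Q)\sigma^{-s}$ and $\sigma^s(Q\diamond P)\sigma^{-s}$ separately, with three cases for each. It then matches every one of them with an individual insertion or product term of $F\circledcirc G$ or $G\circledcirc F$.

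You instead do three things:
\begin{enumerate}
\item compute only the unconjugated term $\widehat{\Psi_{p+q}(P\diamond Q)}$;
\item transport the conjugations via $\widehat{\Psi(\sigma R\sigma^{-1})}=\widehat{\Psi(R)}\circ\sigma$, which follows from $\we{\sigma R\sigma^{-1}}=\sigma\we R$ and the permutation invariance of the pairing on $A^{\otimes N}$;
\item invoke \eqref{eq-RB-bracket-cyclic-symmetrization} from the proof of Theorem \ref{th-dgla-ssrb}, which already does the bookkeeping that matches insertion terms with rotated product terms.
\end{enumerate}
Your route is shorter and shows why the theorem holds: both brackets become signed cyclic symmetrizations of the same product term. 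The price is dependence on that earlier identity. The paper's route is self-contained and records exactly which conjugate yields which term, which makes formulas such as \eqref{eq-double-jacobi-rb-torsion} immediate.

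One precision for the key step: $\widehat{\Psi_{p+q}(P\diamond Q)}$ is not literally a product term tested against $x_0$. Direct contraction of $\we{P\diamond Q}$ with $x_{p+q}\otimes\cdots\otimes x_0$ gives the insertion term $\langle G(x_1,\ldots,x_{q-1},x_qF(x_{q+1},\ldots,x_{p+q})),x_0\rangle$. This is the $s=0$ instance of the paper's first case.

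To bring it into product form you must use the cyclic symmetry $\sigma_{q+1}\we Q=(-1)^q\we Q$ (equivalently $G\in\mc_{rb}^q(A)$) together with invariance of the trace form. This yields $(-1)^q\langle F(x_{q+1},\ldots,x_{p+q})G(x_0,\ldots,x_{q-1}),x_q\rangle$. That is exactly the term $A_q$ from the proof of Theorem \ref{th-dgla-ssrb}, i.e.\ the $q$-fold rotation of $(-1)^{pq}\langle F(x_1,\ldots,x_p)G(x_{p+1},\ldots,x_{p+q}),x_0\rangle$.

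As you anticipated, this shift is harmless. By the analogue of \eqref{eq-cyc-conjugation} for forms, the signed cyclic sum of $A_q$ equals that of the unrotated product term. Subtracting the $Q\diamond P$ contribution then gives $\sum_r(-1)^{(p+q)r}\theta_{F,G}\circ\sigma^{-r}$, as required.

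Your degree-zero case is also fine, since $\langle[h,Q_i],y\rangle=-\langle Q_i,[h,y]\rangle$ and $[h,G]_{rb}=-[G,h]_{rb}$.
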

	\begin{proof}
		Let $F:=\Psi_p(P)$ and $G:=\Psi_q(Q)$.   
		If $p=q=0$, then both brackets are the commutator in $A$.  If $p\geq1$ and $q=0$, then
		\begin{align*}
			&\Psi_p([P,Q]_{dL})(x_1,\ldots,x_p)\\
			=&\Psi_p( \sum_{s=0}^p(-1)^{ps} \sigma_{p+1}^s(P\circ(\id^{\otimes p}\otimes Q) -(Q\otimes \id^{\otimes p})\circ P)\sigma_{p+1}^{-s}  )(x_1,\ldots,x_p)\\
			=&F(x_1,\ldots,x_p)G-GF(x_1,\ldots,x_p)+ \sum_{i=1}^{p}F(x_1,\ldots,x_{i-1},Gx_i-x_iG,x_{i+1},\ldots,x_p) ,
		\end{align*}
		which is $[F,G]_{rb}$ by \eqref{eq-rb-degree-zero-2}.  The case $p=0$
		and $q\geq1$ follows from graded skew-symmetry.
		
		Suppose now that $p,q\geq1$.  Let
		$$
		\we{P}=\sum_{\alpha}P_0^{\alpha}\otimes\cdots\otimes P_p^{\alpha},\qquad
		\we{Q}=\sum_{\beta}Q_0^{\beta}\otimes\cdots\otimes Q_q^{\beta}.
		$$
		Then
		\begin{align*}
			&\we{ P\diamond Q}= \sum_{\alpha,\beta} P_0^\alpha\otimes \cdots \otimes P_{p-1}^\alpha \otimes P_p^\alpha Q_0^\beta\otimes Q_1^\beta \otimes \cdots \otimes Q_q^\beta,\\
			&   \we {Q\diamond P}=\sum_{\alpha,\beta}Q_0^{\beta}\otimes\cdots\otimes Q_{q-1}^\beta\otimes Q_q^{\beta}P_0^\alpha \otimes P_1^\alpha \otimes \cdots\otimes P_p^\alpha.
		\end{align*}
		For the terms related to $P\diamond Q$, we distinguish
		the following three cases.
		\begin{enumerate}
			\item Let $0\leq s\leq q-1$ and  put $j=q-s$. Then
			\begin{align*}
				&(-1)^{(p+q)s}\Psi_{p+q}(\sigma_{p+q+1}^s (P\diamond Q)\sigma_{p+q+1}^{-s}  )(x_1,\dots,x_{p+q})\\
				=&(-1)^{pq+jp}G(x_1,\dots,x_{j-1},x_jF(x_{j+1},\dots,x_{j+p}), x_{j+p+1},\dots,x_{p+q}).
			\end{align*}
			\item   Let $s=q$. Then
			\begin{align*}
				&(-1)^{(p+q)q}\Psi_{p+q}(\sigma_{p+q+1}^q (P\diamond Q)\sigma_{p+q+1}^{-q}  )(x_1,\dots,x_{p+q})
				=(-1)^{pq} F(x_1,\dots,x_p)G(x_{p+1},\dots, x_{p+q}).
			\end{align*}
			\item   Let $q+1\leq s\leq p+q$ and $i=p+q-s+1$. Then
			\begin{align*}
				&(-1)^{(p+q)s}\Psi_{p+q}(\sigma_{p+q+1}^s (P\diamond Q)\sigma_{p+q+1}^{-s}  )(x_1,\dots,x_{p+q})\\
				=&(-1)^{(i-1)q}F(x_1,\dots,x_{i-1},G(x_{i},\dots,x_{i+q-1})x_{i+q}, x_{i+q+1},\dots,x_{p+q}).
			\end{align*}
		\end{enumerate}

		For the terms related to $Q\diamond P$, we distinguish  the following three cases.
		\begin{enumerate}
			\item   Let $0\leq s\leq p-1$ and put $i=p-s$. Then
			\begin{align*}
				&-(-1)^{(p+q)s+pq}\Psi_{p+q}(\sigma_{p+q+1}^s (Q\diamond P)\sigma_{p+q+1}^{-s}  )(x_1,\dots,x_{p+q})\\
				=&-(-1)^{iq}F(x_1,\dots,x_{i-1},x_iG(x_{i+1},\dots,x_{i+q}), x_{i+q+1},\dots,x_{p+q}).
			\end{align*}
			\item  Let $s=p$. Then
			\begin{align*}
				&-(-1)^{p^2}\Psi_{p+q}(\sigma_{p+q+1}^p (Q\diamond P)\sigma_{p+q+1}^{-p}  )(x_1,\dots,x_{p+q})
				=-G(x_1,\dots ,x_q)F(x_{q+1},\dots,x_{p+q}).
			\end{align*}
			\item Let $p+1\leq s\leq p+q$ and $j=p+q-s+1$. Then
			\begin{align*}
				&-(-1)^{(p+q)s+pq}\Psi_{p+q}(\sigma_{p+q+1}^s (Q\diamond P)\sigma_{p+q+1}^{-s}  )(x_1,\dots,x_{p+q})\\
				=&-(-1)^{pq+(j-1)p}G(x_1,\dots,x_{j-1},F(x_{j},\dots,x_{j+p-1})x_{j+p}, x_{j+p+1},\dots,x_{p+q}).
			\end{align*}
			
		\end{enumerate}
		
		Thus, we have
		\begin{align*}
			&\Psi_{p+q}\bigl([P,Q]_{dL}\bigr)(x_1,\ldots,x_{p+q})\\
			=&\sum_{i=1}^{p}(-1)^{(i-1)q}F\bigl(x_1,\ldots,x_{i-1},G(x_i,\ldots,x_{i+q-1})x_{i+q},x_{i+q+1},\ldots,x_{p+q}\bigr)\\
			&-\sum_{i=1}^{p}(-1)^{iq}F\bigl(x_1,\ldots,x_{i-1},x_iG(x_{i+1},\ldots,x_{i+q}),x_{i+q+1},\ldots,x_{p+q}\bigr)\\
			&-(-1)^{pq}\sum_{j=1}^{q}(-1)^{(j-1)p}G\bigl(x_1,\ldots,x_{j-1},F(x_j,\ldots,x_{j+p-1})x_{j+p},x_{j+p+1},\ldots,x_{p+q}\bigr)\\
			&+(-1)^{pq}\sum_{j=1}^{q}(-1)^{jp}G\bigl(x_1,\ldots,x_{j-1},x_jF(x_{j+1},\ldots,x_{j+p}),   x_{j+p+1},\ldots,x_{p+q}\bigr)\\
			&+(-1)^{pq}F(x_1,\ldots,x_p)G(x_{p+1},\ldots,x_{p+q})-G(x_1,\ldots,x_q)F(x_{q+1},\ldots,x_{p+q}).
		\end{align*}
		The right-hand side is precisely
		$F\circledcirc G-(-1)^{pq}G\circledcirc F=[F,G]_{rb}$, which proves \eqref{eq-Psi-bracket-intertwining}.  The
		remaining assertions follow from Proposition
		\ref{prop-dl-rb-cochain-identification} and Theorem \ref{th-dgla}.
	\end{proof}
	Theorem \ref{th-dl-rb-dgla-isomorphism} gives, at the level of Maurer-Cartan elements, a
	cohomological refinement of Theorem \ref{th-dl-rb}.  Indeed, if
	$\mu\in\mathcal{C}_{dL}^{1}(V)$ and $\mr=\Psi_1(\mu)$, then
	\begin{equation}\label{eq-double-jacobi-rb-torsion}
		\Psi_2(\mu\square\mu)(x,y)
		=\mr\bigl(\mr(x)y+x\mr(y)\bigr)-\mr(x)\mr(y).
	\end{equation}
	Thus $\mu\square\mu=0$ if and only if $\mr$ is a Rota-Baxter operator,
	whereas $\mu\in\mathcal{C}_{dL}^{1}(V)$ if and only if $\mr$ is
	skew-symmetric.
	
	Let now $\mu$ be a double Lie bracket and let
	$\mr=\Psi_1(\mu)$.  Twisting the two graded Lie algebras by the corresponding
	Maurer-Cartan elements gives the differentials
	\begin{equation*}
		d_{\mu}P=[\mu,P]_{dL},
		\qquad
		d_{\mr}F=[\mr,F]_{rb}.
	\end{equation*}
	Equation \eqref{eq-Psi-bracket-intertwining} implies $\Psi_{p+1}\circ d_{\mu}=d_{\mr}\circ\Psi_p$ for $p\geq0$.
	
	It follows that if $\mr$ is a skew-symmetric Rota-Baxter operator, then $(\mc_{rb}^\bullet(A),d_\mr=[\mr,\cdot]_{rb})$ is a cochain complex. Its cohomology is called the cohomology of the skew-symmetric Rota-Baxter operator $\mr$ and is denoted by $H_{rb,\mathrm{cyc}}^{\bullet}(A,\mr)= \bigoplus_{p\geq 0} H_{rb,{\rm cyc}}^p(A,\mr)$, where $ H_{rb,{\rm cyc}}^p(A,\mr)$ is defined by
	$$
	H_{rb,\mathrm{cyc}}^p(A,\mr):= \frac{\ker (d_\mr: \mc_{rb }^p(A)\rightarrow \mc_{rb}^{p+1} (A)) } { \mathrm{im} (d_\mr: \mc_{rb}^{p-1}(A)\rightarrow \mc_{rb}^{p} (A))}.
	$$
	We set $\mc_{rb}^{-1}(A):=0$.
	In particular,
	$H_{rb,\mathrm{cyc}}^0(A,\mr)
	=\ker(d_\mr:\mc_{rb}^0(A)\to\mc_{rb}^1(A))$.
	\begin{corollary}\label{cor-dl-rb-cohomology}
		Let $(V,\mu)$ be a finite-dimensional double Lie algebra and let $\mr$ be
		the associated skew-symmetric Rota-Baxter operator on $A=\en(V)$.  Then
		$$
		\Psi:(\mathcal{C}_{dL}^{\bullet}(V),[\cdot,\cdot]_{dL},d_{\mu}) \longrightarrow (\mc_{rb}^{\bullet}(A),[\cdot,\cdot]_{rb},d_{\mr})
		$$
		is an isomorphism of dg Lie algebras.  Consequently,  $H_{dL}^\bullet(V,\mu)\cong H_{rb,\mathrm{cyc}}^{\bullet}(A,\mr)$.
	\end{corollary}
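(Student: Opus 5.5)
The argument is a formal consequence of Theorem \ref{th-dl-rb-dgla-isomorphism} together with the identification of Maurer-Cartan elements. I would carry it out in three steps.

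\textbf{Step 1: the two Maurer-Cartan elements match.} Since $\mu$ is a double Lie bracket, Corollary \ref{cor-mc-dla} gives $\mu\in\mathcal{C}_{dL}^1(V)$ with $[\mu,\mu]_{dL}=0$. The associated operator $\mr$ from \eqref{eq-dl-rb} equals $\Psi_1(\mu)$ by Proposition \ref{prop-dl-rb-cochain-identification}. Applying \eqref{eq-Psi-bracket-intertwining} with $P=Q=\mu$ gives
\[
[\mr,\mr]_{rb}=\Psi_2([\mu,\mu]_{dL})=0 .
\]
So $\mr$ is a Maurer-Cartan element of $(\mc_{rb}^\bullet(A),[\cdot,\cdot]_{rb})$. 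This is consistent with Theorem \ref{th-dl-rb} and Corollary \ref{cor-mc-ssrb}, and it shows that $d_\mr=[\mr,\cdot]_{rb}$ squares to zero by the graded Jacobi identity of Theorem \ref{th-dgla-ssrb}.

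\textbf{Step 2: $\Psi$ intertwines the differentials.} For $P\in\mathcal{C}_{dL}^p(V)$, use \eqref{eq-Psi-bracket-intertwining} once more:
\[
\Psi_{p+1}(d_\mu P)=\Psi_{p+1}([\mu,P]_{dL})=[\Psi_1(\mu),\Psi_p(P)]_{rb}=d_\mr\Psi_p(P).
\]
This holds for all $p\geq0$, and the degree-zero case is already covered by the theorem. Combined with Theorem \ref{th-dl-rb-dgla-isomorphism}, this shows that $\Psi$ is a bijective, degree-preserving map that respects both the brackets and the differentials. Hence $\Psi$ is an isomorphism of dg Lie algebras. Its inverse is automatically a dg Lie morphism, since it is the inverse of a bijective morphism.

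\textbf{Step 3: cohomology.} A cochain isomorphism sends cocycles to cocycles and coboundaries to coboundaries bijectively. Therefore $\Psi_p$ induces isomorphisms $H_{dL}^p(V,\mu)\cong H_{rb,\mathrm{cyc}}^p(A,\mr)$. In degree $0$ both sides are kernels, with the conventions $\mathcal{C}^{-1}_{dL}(V)=0$ and $\mc^{-1}_{rb}(A)=0$.

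There is essentially no obstacle here. The only point needing care is that the $\mr$ in the statement, defined via \eqref{eq-dl-rb}, really is $\Psi_1(\mu)$, and that the normalization $\widetilde\mu=\sum_i e_i\otimes\mr(e_i^T)$ agrees with \eqref{eq-dl-rb}. I would check this directly by applying $\rho_2$ to $\widetilde\mu$ and comparing the result with $\lc a,b\rc$.
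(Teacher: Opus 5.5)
Your proposal is correct and follows the paper's argument. The paper likewise takes $\mr=\Psi_1(\mu)$ (Proposition \ref{prop-dl-rb-cochain-identification}), twists both graded Lie algebras by these Maurer-Cartan elements, reads off $\Psi_{p+1}\circ d_\mu=d_{\mr}\circ\Psi_p$ from the bracket-intertwining identity of Theorem \ref{th-dl-rb-dgla-isomorphism}, and then obtains the cohomology isomorphism immediately; your additional checks that $[\mr,\mr]_{rb}=\Psi_2([\mu,\mu]_{dL})=0$ and that the normalization $\widetilde\mu=\sum_i e_i\otimes\mr(e_i^T)$ agrees with \eqref{eq-dl-rb} are consistency verifications the paper leaves implicit.
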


	\begin{remark}
		The cyclic Rota-Baxter cochain complex $(\mc_{rb}^{\bullet}(A),d_\mr )$ is generally a proper subcomplex of
		the full Rota-Baxter deformation complex   $(\bigoplus_{p\geq0}\Hom(A^{\otimes p},A),d_\mr)$.
		In the matrix case, Corollary \ref{cor-dl-rb-cohomology} identifies this cyclic Rota-Baxter cochain complex with
		the double Lie algebra cochain complex.
	\end{remark}

	\subsection{A cyclic-cohomological interpretation}
	\begin{lemma}\label{lem-M}
		Let $A$ be an associative algebra and $\mr$ be a Rota-Baxter operator. Define  left and right actions of $A_{\mr}$ by
		\begin{align}
			&l_\mr:A_\mr\rightarrow \en(A),\qquad   l_\mr(a)b:=\mr(a)b-\mr(ab),                                      \label{eq-RB-left-module}\\
			&r_\mr:A_\mr\rightarrow \en(A),\qquad  r_\mr(a)b:=b\mr(a)-\mr(ba),                            \qquad a,b\in A.          \label{eq-RB-right-module}
		\end{align}
		Then  $(l_\mr,r_\mr)$ defines an $A_\mr$-bimodule structure on $A$. We denote this bimodule by $M={}_{l_\mr}A_{r_\mr}$.
	\end{lemma}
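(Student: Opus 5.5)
The plan is to verify directly the three bimodule axioms for $(l_\mr,r_\mr)$ over $A_\mr$. These are: $l_\mr(a\cdot_\mr b)=l_\mr(a)l_\mr(b)$ (left module), $r_\mr(a\cdot_\mr b)=r_\mr(b)r_\mr(a)$ (right module), and $l_\mr(a)r_\mr(b)=r_\mr(b)l_\mr(a)$ (compatibility). Each is to hold for all $a,b\in A$. The only inputs are associativity of $A$ and the Rota--Baxter identity \eqref{eq-rb}, together with its consequence $\mr(a\cdot_\mr b)=\mr(a)\mr(b)$ noted after the definition of $A_\mr$.

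For the left module axiom, apply both sides to $c\in A$. The left side is
$$l_\mr(a\cdot_\mr b)c=\mr(a)\mr(b)c-\mr\bigl((\mr(a)b+a\mr(b))c\bigr),$$
using $\mr(a\cdot_\mr b)=\mr(a)\mr(b)$. The right side is
$$l_\mr(a)\bigl(\mr(b)c-\mr(bc)\bigr)=\mr(a)\mr(b)c-\mr(a)\mr(bc)-\mr(a\mr(b)c)+\mr(a\mr(bc)).$$
We then expand $\mr(a)\mr(bc)=\mr(\mr(a)bc+a\mr(bc))$ by \eqref{eq-rb}. The terms $\mr(a\mr(bc))$ cancel, and what remains is $\mr(a)\mr(b)c-\mr(\mr(a)bc)-\mr(a\mr(b)c)$, which agrees with the left side. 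The right module axiom is the mirror computation: expand $\mr(cb)\mr(a)$, or equivalently pass to the opposite algebra, in which $\mr$ is still a Rota--Baxter operator and $l,r$ interchange. So it needs no separate work beyond noting the symmetry.

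For compatibility, compute $l_\mr(a)(r_\mr(b)c)=\mr(a)c\mr(b)-\mr(a)\mr(cb)-\mr(ac\mr(b))+\mr(a\mr(cb))$ and $r_\mr(b)(l_\mr(a)c)=\mr(a)c\mr(b)-\mr(\mr(ac))\mr(b)$-type terms, namely $\mr(a)c\mr(b)-\mr(ac)\mr(b)-\mr(\mr(a)cb)+\mr(\mr(ac)b)$. Their difference is
$$-\mr(a)\mr(cb)+\mr(ac)\mr(b)-\mr(ac\mr(b))+\mr(a\mr(cb))+\mr(\mr(a)cb)-\mr(\mr(ac)b).$$
Expanding the two products with \eqref{eq-rb}, $\mr(a)\mr(cb)=\mr(\mr(a)cb+a\mr(cb))$ and $\mr(ac)\mr(b)=\mr(\mr(ac)b+ac\mr(b))$, every term cancels.

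The main obstacle is only the sign and term bookkeeping in the compatibility identity, since it uses the Rota--Baxter identity twice at once. I would organize it by writing every term as $\mr(\cdot)$ or as a product of two $\mr$-values before cancelling.
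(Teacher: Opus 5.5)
Your proof is correct and follows the paper's approach: you verify the three bimodule identities directly from associativity and the Rota--Baxter identity. The paper writes out only the left-module identity and calls the other two ``similar'', whereas you also carry out the compatibility check and get the right-module identity from the opposite algebra. (The expression $\mr(\mr(ac))\mr(b)$ in your compatibility paragraph is a typo, but the expression after ``namely'' is correct.)
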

	\begin{proof}
		Let $a,b,c \in A$.   We obtain
		\begin{align*}
			l_\mr(a\cdot_\mr b)  c=&\mr(a \cdot_\mr b) c-\mr( (a\cdot_\mr b)c)=\mr(a)\mr(b)c-\mr(\mr(a)bc+a\mr(b)c)\\
			=& \mr(a)\mr(b)c-\mr(a)\mr(bc)  -\mr\bigl(a\mr(b)c-a\mr(bc)\bigr)=l_\mr (a) (l_\mr(b) c).
		\end{align*}
		
		Similarly, we can prove that
		$$
		r_\mr( c ) (r_\mr(b) a)=r_\mr (b\cdot_\mr c) a,\qquad
		l_\mr(a)(r_\mr (c)b)=r_\mr(c)(l_\mr(a)b).
		$$
		Therefore, $(l_\mr,r_\mr)$ defines an $A_\mr$-bimodule structure on $A$.
	\end{proof}
	For $p\geq0$, set $ \mc^p(A_{\mr},M):=\Hom(A_{\mr}^{\otimes p},M) =\Hom(A^{\otimes p},A)$.
	The Hochschild differential
	$ d_H:\mc^p(A_{\mr},M)\longrightarrow \mc^{p+1}(A_{\mr},M)$
	is given by
	\begin{align*}
		&(d_H F)(x_1,\ldots,x_{p+1}) \notag\\
		=&l_\mr(x_1)F(x_2,\ldots,x_{p+1})+\sum_{i=1}^{p}(-1)^i  F(x_1,\ldots,x_i\cdot_{\mr}x_{i+1},\ldots,x_{p+1})+(-1)^{p+1}   r_\mr(x_{p+1})F(x_1,\ldots,x_p).
	\end{align*}
	For $p=0$, this becomes $ (d_H a)(x)=l_\mr(x)a-r_\mr(x)a$.  The cohomology of $(\mc^\bullet(A_{\mr},M)=\bigoplus_{p\geq 0}\mc^p(A_{\mr},M), d_H )$ is denoted by $HH^\bullet(A_{\mr},M)=\bigoplus_{p\geq 0}HH^p(A_{\mr},M) $.
	\begin{proposition}[{ \cite{Das20}}]\label{prop-RB-Hochschild-complex}
		With the sign conventions adopted in the present paper, for every
		$F\in\Hom(A^{\otimes p},A)$, one has
		\begin{equation}\label{eq-RB-Hochschild-sign}
			d_{\mr}F=[\mr,F]_{rb}= (-1)^p d_H F.
		\end{equation}
		Consequently, the map
		$   S_p(F):=(-1)^{\frac{p(p-1)}2}F$
		defines an isomorphism of cochain complexes
		$$
		S:
		\bigl(\mc^\bullet(A_\mr,M),d_\mr\bigr)
		\xrightarrow{\ \cong\ }
		\bigl(\mc^\bullet(A_\mr,M),d_H\bigr).
		$$
	\end{proposition}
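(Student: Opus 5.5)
The plan is to evaluate $[\mr,F]_{rb}$ directly from the definition of $\circledcirc$, with $\mr$ of degree $1$ and $F$ of degree $p$. We then compare the result term by term with $d_H F$, written out using $l_\mr$, $r_\mr$ and $\cdot_\mr$.

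Write $[\mr,F]_{rb}=\mr\circledcirc F-(-1)^{p}F\circledcirc\mr$ and expand each piece. The first piece, $\mr\circledcirc F$, has only $i=1$ in both insertion sums, so
\begin{equation*}
(\mr\circledcirc F)(x_1,\ldots,x_{p+1})=\mr\bigl(F(x_1,\ldots,x_p)x_{p+1}\bigr)-(-1)^{p}\mr\bigl(x_1F(x_2,\ldots,x_{p+1})\bigr)+(-1)^{p}\mr(F(x_1,\ldots,x_p))\,x_{p+1}.
\end{equation*}
The second piece, $F\circledcirc\mr$, consists of the insertion terms $F(\ldots,\mr(x_i)x_{i+1},\ldots)$ and $F(\ldots,x_i\mr(x_{i+1}),\ldots)$, together with the product term $(-1)^pF(x_1,\ldots,x_p)\mr(x_{p+1})$.

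Next we group terms. In $F\circledcirc\mr$ the $i$-th terms of the two insertion sums carry signs $(-1)^{i-1}$ and $-(-1)^{i}$. After shifting the index in the second sum, both contribute $(-1)^{i-1}$ at slot $i$, so they combine into $(-1)^{i-1}F(x_1,\ldots,x_i\cdot_\mr x_{i+1},\ldots)$. Multiplying by $-(-1)^p$ turns this into $(-1)^p\sum_i(-1)^iF(\ldots,x_i\cdot_\mr x_{i+1},\ldots)$, which is $(-1)^p$ times the middle sum of $d_H F$.

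The remaining four terms should assemble into the two bimodule terms:
\begin{equation*}
(-1)^p\bigl(\mr(x_1)F-\mr(x_1F)\bigr)=(-1)^p\,l_\mr(x_1)F(x_2,\ldots,x_{p+1}),
\end{equation*}
\begin{equation*}
-F\mr(x_{p+1})+\mr(Fx_{p+1})=-r_\mr(x_{p+1})F(x_1,\ldots,x_p)=(-1)^p(-1)^{p+1}r_\mr(x_{p+1})F(x_1,\ldots,x_p).
\end{equation*}

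One mismatch has to be resolved: the product term of $\mr\circledcirc F$ is $(-1)^p\mr(F)\,x_{p+1}$, not $\mr(x_1)F$. This must be reconciled with the sign and ordering convention of $\circledcirc$ when the left argument has degree one. In that case the product term $(-1)^{pq}F(x_1..)G(..)$ with $F=\mr$ reads $(-1)^p\mr(x_1)F(x_2,\ldots,x_{p+1})$, since $\mr$ takes one argument and $F$ takes the last $p$. So the correct reading is $\mr(x_1)F(x_2,\ldots)$, and the grouping above goes through.

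This sign bookkeeping, especially the index shift pairing the two insertion sums, is where errors are most likely, so I would verify it on $p=0,1$ first. For $p=0$, with $F=a$, formula \eqref{eq-rb-degree-zero-2} gives $[\mr,a]_{rb}(x)=\mr(xa-ax)+a\mr(x)-\mr(x)a$, which matches $l_\mr(x)a-r_\mr(x)a$.

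Finally, with $d_\mr=(-1)^pd_H$ on degree $p$, we check the isomorphism. The identity $S_{p+1}d_\mr=d_HS_p$ reduces to $(-1)^{p(p+1)/2}(-1)^p=(-1)^{p(p-1)/2}$, which holds because $p(p+1)/2-p(p-1)/2=p$. Each $S_p$ is $\pm\id$, hence bijective, so $S$ is an isomorphism of cochain complexes.
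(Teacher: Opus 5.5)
Your overall strategy is sound: expand $[\mr,F]_{rb}=\mr\circledcirc F-(-1)^pF\circledcirc\mr$ and match it term by term with $d_HF$. The paper gives no proof of this proposition beyond citing Das, and a direct expansion of this kind is the natural verification. For $p\geq 1$ your final grouping is correct, and so is your sign check for $S$. Two steps are wrong as written, however.

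First, your displayed formula for $\mr\circledcirc F$ has the wrong product term. Take the definition of $\circledcirc$ with $\mr$ (arity $1$) in the first slot and $F$ (arity $p$) in the second. The product term is then $(-1)^{p}\mr(x_1)F(x_2,\ldots,x_{p+1})$ from the outset. There is no convention mismatch to reconcile; the display should simply be corrected. Similarly, no index shift is needed to pair the two insertion sums of $F\circledcirc\mr$. At slot $i$, both $\mr(x_i)x_{i+1}$ and $x_i\mr(x_{i+1})$ already sit between $x_1,\ldots,x_{i-1}$ and $x_{i+2},\ldots,x_{p+1}$, and $-(-1)^i=(-1)^{i-1}$.

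Second, and more substantively, your $p=0$ check is wrong. This case is not covered by your expansion, because $\circledcirc$ is defined only for $p,q\geq 1$, so it must be handled on its own. It is also exactly what makes $S_1\circ d_\mr=d_H\circ S_0$ hold in the bottom degree. Applying \eqref{eq-rb-degree-zero-2} with $F=\mr$ gives
\begin{equation*}
[\mr,a]_{rb}(x)=\mr(ax-xa)+\mr(x)a-a\mr(x).
\end{equation*}
You wrote its negative, $\mr(xa-ax)+a\mr(x)-\mr(x)a$. Your expression also does not match $d_Ha$ as you claim, since
\begin{equation*}
(d_Ha)(x)=l_\mr(x)a-r_\mr(x)a=\mr(x)a-\mr(xa)-a\mr(x)+\mr(ax),
\end{equation*}
which is the negative of what you wrote. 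Taken at face value, your computation would give $d_\mr a=-d_Ha$, and then $S$ would fail to be a cochain map in degree $0$. With the correct formula one gets $d_\mr a=d_Ha$, and the rest of your argument goes through.
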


	\begin{lemma}\label{lem-RB-bimodule}
		Let $(A,\langle\cdot,\cdot\rangle)$ be a finite-dimensional symmetric Frobenius algebra  and $ \mr$ be a skew-symmetric Rota-Baxter operator on $(A,\langle\cdot,\cdot\rangle)$.
		Define  left and right actions of $A_{\mr}$ on ${A_\mr}^*$ by
		\begin{align*}
			(a\cdot f)(x):=f(x\cdot_\mr a),\qquad (f\cdot a)(x):=f(a\cdot_\mr x),\qquad a,x\in A_\mr, f\in {A_\mr}^*.
		\end{align*}
		Define $\iota : M\rightarrow {A_\mr}^*$, $\iota(x) (a):=\langle x,a\rangle$ for $x\in M$ and $a\in A_\mr$.   Then $\iota$ is an $A_\mr$-bimodule isomorphism.
	\end{lemma}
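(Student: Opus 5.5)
Bijectivity of $\iota$ is immediate: $A$ is finite-dimensional and $\langle\cdot,\cdot\rangle$ is non-degenerate, so $\iota:A\to A^*$ is a linear isomorphism; as vector spaces $M=A$ and $A_\mr^*=A^*$. It therefore remains to show that $\iota$ intertwines the left actions and the right actions, that is,
\begin{equation*}
\iota(l_\mr(a)b)=a\cdot\iota(b),\qquad \iota(r_\mr(a)b)=\iota(b)\cdot a,\qquad a,b\in A.
\end{equation*}
Both sides of each identity are linear functionals, so we evaluate them on an arbitrary $x\in A$. The only tools are invariance $\langle xy,z\rangle=\langle x,yz\rangle$, symmetry of the form, and skew-symmetry $\langle\mr(u),v\rangle=-\langle u,\mr(v)\rangle$. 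Note that the Rota-Baxter identity itself is not needed; it was used only to know that $A_\mr$ is associative and that $M$ is a bimodule, as in Lemma~\ref{lem-M}.

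For the left action, the left-hand side is
\begin{equation*}
\langle \mr(a)b-\mr(ab),x\rangle=\langle \mr(a)b,x\rangle+\langle ab,\mr(x)\rangle .
\end{equation*}
The right-hand side is
\begin{equation*}
\langle b,x\cdot_\mr a\rangle=\langle b,\mr(x)a\rangle+\langle b,x\mr(a)\rangle .
\end{equation*}
Using invariance together with symmetry, which give cyclicity $\langle uv,w\rangle=\langle vw,u\rangle$, we match terms: $\langle \mr(a)b,x\rangle=\langle b,x\mr(a)\rangle$ and $\langle ab,\mr(x)\rangle=\langle b,\mr(x)a\rangle$. Hence the two sides agree.

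For the right action, the left-hand side is
\begin{equation*}
\langle b\mr(a)-\mr(ba),x\rangle=\langle b\mr(a),x\rangle+\langle ba,\mr(x)\rangle .
\end{equation*}
The right-hand side is
\begin{equation*}
\langle b,a\cdot_\mr x\rangle=\langle b,\mr(a)x\rangle+\langle b,a\mr(x)\rangle .
\end{equation*}
Invariance alone gives $\langle b\mr(a),x\rangle=\langle b,\mr(a)x\rangle$ and $\langle ba,\mr(x)\rangle=\langle b,a\mr(x)\rangle$, so these sides agree as well.

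There is no real obstacle. The only point requiring care is the convention for the dual actions: $(a\cdot f)(x)=f(x\cdot_\mr a)$ and $(f\cdot a)(x)=f(a\cdot_\mr x)$. With $l_\mr$ paired to the first of these and $r_\mr$ to the second, each term matches after a single cyclic rotation or a single use of skew-symmetry. As a sanity check, the computation also confirms that these formulas define a bimodule structure on $A_\mr^*$, since it is transported from $M$.
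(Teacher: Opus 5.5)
Your proof is correct and follows the paper's own argument: non-degeneracy of the form gives bijectivity, and the two intertwining identities come from expanding $l_\mr$, $r_\mr$ and $\cdot_\mr$, then applying skew-symmetry of $\mr$ together with invariance and symmetry of the form. The paper writes out only the left action and calls the right action similar, whereas you check both explicitly, and your term matchings are all correct.
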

	\begin{proof}
		Since $\langle\cdot,\cdot\rangle$ is non-degenerate, $\iota$ is a linear isomorphism.
		For $a,b\in A_\mr $, $x\in M$, we have
		\begin{align*}
			\langle l_\mr (a) x,b\rangle=& \langle \mr(a)x- \mr(ax),b\rangle =\langle x, b\mr (a)\rangle +\langle ax,\mr(b)\rangle\\
			=&\langle x, b\cdot_\mr a\rangle= (a\cdot \iota(x) )(b),
		\end{align*}
		which means that $ \iota( l_\mr (a) x)=a\cdot \iota (x)$. Similarly, we can prove that $\iota(r_\mr(a)x)=\iota(x)\cdot a$.
		Thus, $\iota$ is an $A_\mr$-bimodule isomorphism.
	\end{proof}

	We now recall some notions about cyclic  cohomology
	\cite{Connes85,Loday98}.  Put
	$\mc^p_{\mathrm{Hoch}}(A_{\mr}):=\Hom(A_{\mr}^{\otimes(p+1)},\bfk)$.
	The scalar-valued Hochschild differential $d_C:
	\mc^p_{\mathrm{Hoch}}(A_{\mr})\rightarrow
	\mc^{p+1}_{\mathrm{Hoch}}(A_{\mr})$  is given by ($F\in
	\Hom(A_{\mr}^{\otimes(p+1)},\bfk)$)
	\begin{align*}
		&(d_C F)(x_0,\ldots,x_{p+1})\\
		=&\sum_{i=0}^{p}(-1)^i
		F(x_0,\ldots,x_i\cdot_{\mr}x_{i+1},\ldots,x_{p+1})+(-1)^{p+1}   F(x_{p+1}\cdot_{\mr}x_0,x_1,\ldots,x_p).
	\end{align*}
	
	Define the signed cyclic operator by $ (\tau_p F)(x_0,\ldots,x_p):=(-1)^p F(x_p,x_0,\ldots,x_{p-1})$,
	and set
	\begin{equation*}
		\mc^p_{\tau}(A_{\mr}):=\ker(1-\tau_p)   \subseteq \mc^p_{\mathrm{Hoch}}(A_{\mr}).
	\end{equation*}
	The standard identity $ (1-\tau_{p+1})d_C=d_C'(1-\tau_p)$, where $d_C'$ is defined by
	$ (d_C' F)(x_0,\dots,x_{p+1}) = \sum_{i=0}^{p} (-1)^i F(x_0,\dots,x_i \cdot_\mr x_{i+1},\dots,x_{p+1})$, shows that $ d_C\bigl(\mc^p_{\tau}(A_{\mr})\bigr) \subseteq \mc^{p+1}_{\tau}(A_{\mr}) $. Thus $(\mc_\tau^\bullet (A_\mr)=\bigoplus_{p\geq0} \mc_\tau^p(A_\mr), d_C)$ is a \textbf{cyclic complex} of $A_\mr$.
	We define the \textbf{cyclic  cohomology} $HC^\bullet_\tau(A_\mr)= \bigoplus_{p\geq 0}HC^p_{\tau}(A_{\mr})$, where
	$$
	HC^p_{\tau}(A_{\mr}):=H^p (\mc^\bullet_{\tau}(A_{\mr}),d_C ).
	$$

	\begin{proposition}\label{prop-cyclic-RB-cyclic-Hochschild}
		Let $A$ and $A_\mr$ be as in Lemma \ref{lem-RB-bimodule}.
		For $p\geq0$, define $J_p:\Hom(A^{\otimes p},A)\rightarrow      \Hom(A^{\otimes(p+1)},\bfk)$ by
		\begin{equation*}
			J_p(F)(x_0,\ldots,x_p):=\langle F(x_1,\ldots,x_p),x_0\rangle.
		\end{equation*}
		Then $J_p$ is a vector space isomorphism and satisfies $    J_{p+1}\circ d_{H}=d_C\circ J_p$.
		Consequently,     $J_{p+1}\circ d_{\mr}=(-1)^p d_C\circ J_p $. Moreover, $J_p$ restricts to an isomorphism $J_p: \mc_{rb}^{p}(A) \rightarrow \mc^p_{\tau}(A_{\mr})$. For $p\geq0$, set
		$  \widetilde J_p :=  (-1)^{\frac{p(p-1)}2}J_p$.
		Then
		$  \widetilde J: (\mc_{rb}^{\bullet}(A),d_\mr\bigr)
		\xrightarrow{\ \cong\ }
		\bigl(\mc_{\tau}^{\bullet}(A_\mr),d_C)$
		is an isomorphism of cochain complexes. Consequently,
		$$
		H_{rb,\mathrm{cyc}}^{p}(A,\mr) \cong  HC_{\tau}^{p}(A_\mr), \qquad p\geq0.
		$$
	\end{proposition}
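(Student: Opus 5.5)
The plan is to factor $J_p$ as the composite of the bimodule isomorphism $\iota$ of Lemma~\ref{lem-RB-bimodule} with the standard identification $\Hom(A_\mr^{\otimes p},A_\mr^*)\cong\Hom(A_\mr^{\otimes(p+1)},\bfk)$. With the definitions used here, this identification sends $F$ to $(x_0,\ldots,x_p)\mapsto F(x_1,\ldots,x_p)(x_0)$. Thus $J_p(F)=\Phi_p(\iota\circ F)$. Since $\iota$ is a linear isomorphism by nondegeneracy and $\Phi_p$ is an isomorphism by finite-dimensionality, $J_p$ is an isomorphism of vector spaces.

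\textbf{Step 1: $J_{p+1}\circ d_H=d_C\circ J_p$.} I evaluate both sides on $(x_0,\ldots,x_{p+1})$ and compare term by term.
\begin{itemize}
\item The middle terms $(-1)^iF(\ldots,x_i\cdot_\mr x_{i+1},\ldots)$ for $1\le i\le p$ match directly.
\item The first term $\langle l_\mr(x_1)F(x_2,\ldots),x_0\rangle$ is rewritten with Lemma~\ref{lem-RB-bimodule}, $\iota(l_\mr(a)x)=a\cdot\iota(x)$. It becomes $\langle F(x_2,\ldots,x_{p+1}),x_0\cdot_\mr x_1\rangle$, which is the $i=0$ term of $d_C$.
\item The last term $(-1)^{p+1}\langle r_\mr(x_{p+1})F(x_1,\ldots,x_p),x_0\rangle$ becomes $(-1)^{p+1}\langle F(x_1,\ldots,x_p),x_{p+1}\cdot_\mr x_0\rangle$ by $\iota(r_\mr(a)x)=\iota(x)\cdot a$. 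This is the wrap-around term of $d_C$.
\end{itemize}
Combining Step 1 with Proposition~\ref{prop-RB-Hochschild-complex} then gives $J_{p+1}d_\mr=(-1)^pJ_{p+1}d_H=(-1)^pd_CJ_p$.

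\textbf{Step 2: restriction to the cyclic subspaces.} The defining condition of $\mc^p_{rb}(A)$ reads $J_p(F)(x_0,\ldots,x_p)=(-1)^pJ_p(F)(x_p,x_0,\ldots,x_{p-1})$. This is exactly $\tau_pJ_p(F)=J_p(F)$, after reindexing $x_i\mapsto x_{i-1}$ cyclically. I need to check carefully that the permutation directions agree: the condition relates the arguments $(x_1,\ldots,x_p;x_0)$ and $(x_0,\ldots,x_{p-1};x_p)$, which is a single cyclic shift. Hence $J_p(\mc^p_{rb}(A))=\mc^p_\tau(A_\mr)$, and bijectivity restricts.

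\textbf{Step 3: signs.} I check that $\widetilde J$ is a chain map: $\widetilde J_{p+1}d_\mr=(-1)^{p(p+1)/2}(-1)^pd_CJ_p$. Since $p(p+1)/2+p\equiv p(p-1)/2 \pmod 2$, this equals $d_C\widetilde J_p$. The cohomology isomorphism then follows, with $p=0$ handled by the conventions $\mc^{-1}=0$.

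The main obstacle is Step 1's boundary terms. One must confirm that the invariance and symmetry of the pairing, together with the skew-symmetry of $\mr$, turn $l_\mr$ and $r_\mr$ into exactly $x_0\cdot_\mr x_1$ and $x_{p+1}\cdot_\mr x_0$ in the correct order. Lemma~\ref{lem-RB-bimodule} already packages this, so it reduces to matching its conventions with $d_C$.
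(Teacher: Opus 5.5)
Your proposal is correct and follows the paper's proof essentially step for step. You factor $J_p$ as $\iota_*$ followed by currying, turn the two boundary terms of $d_H$ into the $i=0$ and wrap-around terms of $d_C$ via Lemma~\ref{lem-RB-bimodule}, observe that the defining condition of $\mc^p_{rb}(A)$ is literally $\tau_pJ_p(F)=J_p(F)$, and fix signs using $d_\mr=(-1)^pd_H$ and the factor $(-1)^{p(p-1)/2}$.

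Minor points: the currying isomorphism does not need finite-dimensionality. No cyclic reindexing is actually needed in Step 2. Your name $\Phi_p$ clashes with the paper's map in Theorem~\ref{th-dl-cyclic-Hochschild}.
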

	\begin{proof}
		The map $J_p$ is the composite
		$$
		\Hom({A_\mr}^{\otimes p},M)     \xrightarrow{\ \iota_*\ }   \Hom({A_\mr}^{\otimes p},{A_\mr}^*)     \cong   \Hom({A_\mr}^{\otimes(p+1)},\bfk),
		$$
		and is therefore a vector space isomorphism.
		
		For $F\in\Hom({A_\mr}^{\otimes p},M)$, we have
		\begin{align*}
			&J_{p+1}(d_HF)(x_0,\ldots,x_{p+1})\\
			=&\langle l_\mr(x_1)F(x_2,\ldots,x_{p+1}),x_0\rangle
			+   \sum_{i=1}^{p}(-1)^i\langle F(x_1,\ldots,x_i\cdot_\mr x_{i+1},\ldots,x_{p+1}),x_0\rangle\\
			&+  (-1)^{p+1}\langle       r_\mr(x_{p+1})F(x_1,\ldots,x_p),x_0\rangle.
		\end{align*}
		By Lemma \ref{lem-RB-bimodule}, this is equal to
		\begin{align*}
			&J_p(F)(x_0\cdot_\mr x_1,x_2,\ldots,x_{p+1})
			+\sum_{i=1}^{p}(-1)^iJ_p(F)(x_0,\ldots,x_i\cdot_\mr x_{i+1},\ldots,x_{p+1})\\
			&+  (-1)^{p+1}J_p(F)(x_{p+1}\cdot_\mr x_0,x_1,\ldots,x_p),
		\end{align*}
		which is precisely
		$   (d_C J_p(F))(x_0,\ldots,x_{p+1})$. Thus,  $ J_{p+1}\circ d_{H}=d_C\circ J_p$ and then $J_{p+1}\circ d_{\mr}=(-1)^p d_C\circ J_p $ by    $d_\mr=(-1)^p d_H$.
		
		Consequently, $      J_p(F)(x_0,\ldots,x_p)= \langle F(x_1,\ldots,x_p),x_0\rangle$,
		and hence $J_p(F)$ is cyclic if and only if
		\begin{align*}
			\langle F(x_1,\ldots,x_p),x_0\rangle =      (-1)^p  \langle F(x_0,\ldots,x_{p-1}),x_p\rangle.
		\end{align*}
		This is exactly the defining condition for  $F\in\mc_{rb}^p(A)$.
		Finally,
		\begin{align*}
			\widetilde J_{p+1}\circ d_\mr   &=      (-1)^\frac{p(p+1)}{2}J_{p+1}\circ d_\mr =(-1)^{\frac{p(p+1)}{2}}(-1)^p  d_C\circ J_p\\
			&= (-1)^{\frac{p(p-1)}{2}} d_C\circ J_p=d_C\circ\widetilde J_p.
		\end{align*}
		Therefore, $\widetilde J$ is an isomorphism of cochain complexes.
		Passing to cohomology gives
		\begin{equation*}
			H_{rb,\mathrm{cyc}}^{p}(A,\mr)\cong HC_{\tau}^{p}(A_\mr),\qquad p\geq0.\qedhere
		\end{equation*}
	\end{proof}

	Recall  from Theorem \ref{th-dl-rb-dgla-isomorphism} that $ \Psi_p:\mathcal{C}_{dL}^{p}(V) \xrightarrow{\ \cong\ }\mc_{rb}^{p}(A)$
	and $ \Psi_{p+1}\circ d_\mu=d_{\mr}\circ\Psi_p$.
	The sign in \eqref{eq-RB-Hochschild-sign} must be taken into account in
	order to obtain an actual isomorphism of cochain complexes.
	
	\begin{theorem}\label{th-dl-cyclic-Hochschild}
		Let $(V,\mu)$ be a finite-dimensional double Lie algebra. Let $\mr=\Psi_1(\mu)$ be the associated skew-symmetric Rota-Baxter operator on $A=\en(V)$.
		For $p\geq0$, define $  \Phi_p:\mathcal{C}_{dL}^{p}(V)\longrightarrow \mc^p_{\tau}(A_{\mr})$ by
		\begin{align*}
			\Phi_p(P)(x_0,\ldots,x_p) :=(-1)^{\frac{p(p-1)}{2}}\langle          \Psi_p(P)(x_1,\ldots,x_p),x_0        \rangle.
		\end{align*}
		Then
		$\Phi:  (\mathcal{C}_{dL}^{\bullet}(V),d_\mu )\rightarrow(\mc^\bullet_{\tau}(A_{\mr}),d_C )$
		is an isomorphism of cochain complexes.  Consequently,
		\begin{equation*}
			H^p_{dL}(V,\mu) \cong HC^p_{\tau}(A_{\mr}), \qquad p\geq0.
		\end{equation*}
	\end{theorem}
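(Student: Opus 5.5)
The plan is to write $\Phi$ as the composite of two cochain isomorphisms already in hand, namely $\Phi_p=\widetilde J_p\circ\Psi_p$. By definition, $\widetilde J_p(F)(x_0,\ldots,x_p)=(-1)^{\frac{p(p-1)}2}\langle F(x_1,\ldots,x_p),x_0\rangle$. Substituting $F=\Psi_p(P)$ reproduces the formula defining $\Phi_p(P)$ verbatim. So the only task is to check that each factor is a degreewise isomorphism commuting with the differentials.

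For the first factor, Proposition \ref{prop-dl-rb-cochain-identification} gives that $\Psi_p:\mathcal C_{dL}^p(V)\to\mc_{rb}^p(A)$ is a linear isomorphism for every $p\geq0$. Moreover $\Psi_1(\mu)=\mr$, and by Theorem \ref{th-dl-rb} this $\mr$ is a skew-symmetric Rota-Baxter operator. Theorem \ref{th-dl-rb-dgla-isomorphism} then gives $\Psi_{p+1}([\mu,P]_{dL})=[\Psi_1(\mu),\Psi_p(P)]_{rb}$, that is, $\Psi_{p+1}\circ d_\mu=d_\mr\circ\Psi_p$. This is exactly Corollary \ref{cor-dl-rb-cohomology}. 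The degree $p=0$ case is covered there as well, through the mixed-degree part of the proof of Theorem \ref{th-dl-rb-dgla-isomorphism}.

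For the second factor, $\en(V)$ with the trace form is a finite-dimensional symmetric Frobenius algebra. Hence Proposition \ref{prop-cyclic-RB-cyclic-Hochschild} applies to $A=\en(V)$ and $\mr$. It shows that $\widetilde J_p$ maps $\mc_{rb}^p(A)$ isomorphically onto $\mc_\tau^p(A_\mr)$ and that $\widetilde J_{p+1}\circ d_\mr=d_C\circ\widetilde J_p$. Composing the two results gives $\Phi_{p+1}\circ d_\mu=\widetilde J_{p+1}\circ d_\mr\circ\Psi_p=d_C\circ\Phi_p$. In particular $\Phi_p$ lands in $\mc^p_\tau(A_\mr)$ and is bijective onto it. Passing to cohomology yields $H^p_{dL}(V,\mu)\cong HC^p_\tau(A_\mr)$ for all $p\geq0$, including $p=0$ because both complexes start in degree $0$ with zero in degree $-1$.

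The main point needing care is the sign bookkeeping. The sign $(-1)^{\frac{p(p-1)}2}$ in $\Phi_p$ must be precisely the one absorbing the discrepancy $d_\mr=(-1)^pd_H$ of Proposition \ref{prop-RB-Hochschild-complex}. This is already verified inside Proposition \ref{prop-cyclic-RB-cyclic-Hochschild}, so I only need to confirm that no further sign enters through $\Psi$. That holds because $\Psi$ intertwines $d_\mu$ and $d_\mr$ on the nose, with no sign.
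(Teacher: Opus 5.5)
Your proposal is correct and is essentially the paper's proof. The paper also writes $\Phi_p=(-1)^{\frac{p(p-1)}2}J_p\circ\Psi_p$ and combines $\Psi_{p+1}\circ d_\mu=d_{\mr}\circ\Psi_p$ with $J_{p+1}\circ d_{\mr}=(-1)^p d_C\circ J_p$; the only difference is that you cite the already sign-corrected isomorphism $\widetilde J$ of Proposition~\ref{prop-cyclic-RB-cyclic-Hochschild}, whereas the paper redoes that one-line sign check inline.
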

	
	\begin{proof}
		Since $\Psi_p$ and $J_p$ are vector space isomorphisms and
		$   J_p:\mc_{rb}^p(A)\xrightarrow{\cong}\mc_{\tau}^p(A_\mr)$,
		each $\Phi_p=(-1)^{\frac{p(p-1)}2}J_p\Psi_p$ is a vector space isomorphism.
		
		Moreover, since
		\begin{align*}
			d_C\circ\Phi_p&=    (-1)^{\frac{p(p-1)}2} d_C \circ J_p\circ \Psi_p =(-1)^{\frac{p(p-1)}2}(-1)^pJ_{p+1}\circ d_\mr\circ \Psi_p
			=(-1)^{\frac{p(p+1)}2}J_{p+1}\circ \Psi_{p+1}\circ d_\mu,
		\end{align*}
		we have $   d_C\circ \Phi_p =   \Phi_{p+1}\circ d_\mu$.
		Thus $\Phi$ is an isomorphism of cochain complexes.
	\end{proof}
	\begin{remark}\label{rem-Hochschild-versus-cyclic}
		For an arbitrary Rota-Baxter operator $\mr$ on an associative algebra $A$, Das  \cite{Das20} identified the Rota-Baxter cochain complex with the Hochschild complex of the descendent associative algebra $A_\mr$ with coefficients in the $A_\mr$-bimodule $M={}_{l_{\mr}}A_{r_{\mr}}$ defined in Lemma \ref{lem-M}.
		However, the complex considered here corresponds to its cyclic subcomplex. For example, let $V\neq 0$, $A=\en (V)$, $\dim (A)=m$ and $\mr=0$, so that the corresponding double Lie bracket is $\mu =0$. 
		Therefore,
		$$
		\dim HH^1(A_{\mr},A_{\mr}^*)=m^2,\qquad 
		\dim H^1_{dL}(V,\mu)
		=\dim HC^1_{\tau}(A_{\mr})
		=\frac{m(m-1)}2.
		$$
		Hence the double Lie algebra cohomology is not, in general, the full Hochschild cohomology   $HH^\bullet(A_{\mr},A_{\mr}^*)$.

	\end{remark}

	\subsection{Application: a three-dimensional example}\label{ex-3d-cyclic}
	Let $V=\bfk x_1\oplus \bfk x_2\oplus \bfk x_3$ be a 3-dimensional double Lie algebra whose nonzero values of double Lie bracket $\lc\cdot,\cdot\rc$ are given by
	\begin{equation*}
		\lc x_2,x_2\rc=x_2\otimes x_1-x_1\otimes x_2,
		\qquad
		\lc x_3,x_3\rc=x_3\otimes x_1-x_1\otimes x_3.
	\end{equation*}
	Let   $E_{ij}\in A=\en(V)$ be the matrix unit defined by
	$E_{ij}(x_k)=\delta_{jk}x_i$. Then $  \langle E_{ij},E_{kl}\rangle
	=  \operatorname{tr}(E_{ij}E_{kl})=    \delta_{jk}\delta_{il}$, so
	the basis trace-dual to $\{E_{ij}\}$ is $\{E_{ji}\}$. By
	\eqref{eq-dl-rb}, there is a skew-symmetric Rota-Baxter operator
	$\mr$ on $\en(V)$ whose nonzero actions are given by
	\begin{equation*}
		\mr(E_{21})=-E_{22},        \qquad          \mr(E_{22})=E_{12},         \qquad
		\mr(E_{31})=-E_{33},        \qquad          \mr(E_{33})=E_{13}.
	\end{equation*}

	We now compute the low-degree cohomology of this double Lie
	algebra by using the cyclic cohomology of the descendent
	associative algebra $A_\mr$.   The nonzero products of matrix
	units in $A_\mr$ are
	\begin{align*}
		&E_{11}\cdot_\mr E_{22}=E_{22}\cdot_{\mr}E_{22}=E_{33}\cdot_{\mr}E_{32}=-E_{12}\cdot_{\mr}E_{21}=E_{12},\\
		& E_{11}\cdot_\mr E_{33}=-E_{13}\cdot_{\mr}E_{31}=E_{22}\cdot_{\mr}E_{23}=E_{33}\cdot_{\mr}E_{33}=E_{13},\\
		&E_{21}\cdot_\mr E_{21}=-E_{21},\qquad   E_{21}\cdot_{\mr}E_{33} =-E_{23}\cdot_{\mr}E_{31}=  -E_{21}\cdot_\mr E_{23}=E_{23},\\
		&E_{31}\cdot_{\mr}E_{22}=-E_{31}\cdot_{\mr}E_{32}=-E_{32}\cdot_{\mr}E_{21}=E_{32},\qquad E_{31}\cdot_{\mr}E_{31}=-E_{31},\\
		&E_{22}\cdot_{\mr}E_{21}=E_{11}-E_{22},\qquad E_{33}\cdot_{\mr}E_{31}=E_{11}-E_{33}.
	\end{align*}
	When considering scalar-valued cyclic cochains, let
	$E_{ij}^*\in A^*$ denote the coordinate functional determined by $E_{ij}^*(E_{kl})=\delta_{ik}\delta_{jl}$.
	For $f\in\mc_{\tau}^0(A_\mr)=A^*$, one has $(d_Cf)(X,Y)=f(X\cdot_\mr Y-Y\cdot_\mr X)$.
	It follows from the above multiplication table that
	\begin{equation*}
		[A_\mr,A_\mr]
		=\operatorname{span}\left\{
		E_{12},E_{13},E_{23},E_{32},
		E_{22}-E_{11},E_{22}-E_{33}
		\right\}.
	\end{equation*}
	In particular, $\dim[A_\mr,A_\mr]=6$, and therefore
	\begin{align*}
		HC_{\tau}^0(A_\mr)   =\ker (d_C:\mc_{\tau}^0(A_\mr) \longrightarrow\mc_{\tau}^1(A_\mr) )
		=\operatorname{span}\{  E_{21}^*,E_{31}^*,  E_{11}^*+E_{22}^*+E_{33}^*\}    \cong\bfk^3.
	\end{align*}
	
	\begin{proposition}
		For the cyclic cochain complex of $A_\mr$, one has $\dim HC_\tau^1(A_\mr)=1$.
	\end{proposition}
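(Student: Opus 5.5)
We have to compute $HC^1_\tau(A_\mr)=\ker(d_C:\mc^1_\tau\to\mc^2_\tau)/d_C(\mc^0_\tau)$. The dimension count splits into two pieces. First, the space of coboundaries $d_C(\mc^0_\tau(A_\mr))$ has dimension $9-\dim HC^0_\tau(A_\mr)=9-3=6$, since the kernel of $d_C$ on $\mc^0_\tau=A^*$ was already identified as the annihilator of $[A_\mr,A_\mr]$, which has dimension $3$. So it suffices to show that the space $Z^1$ of cyclic $1$-cocycles has dimension $7$.

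A cochain in $\mc^1_\tau(A_\mr)$ is a bilinear form $f$ on $A$ with $f(x_0,x_1)=-f(x_1,x_0)$, i.e.\ a skew-symmetric form; this space has dimension $\binom{9}{2}=36$. Writing $d_C$ in degree $1$, the cocycle condition reads
\begin{equation*}
f(x\cdot_\mr y,z)-f(x,y\cdot_\mr z)+f(z\cdot_\mr x,y)=0,\qquad x,y,z\in A.
\end{equation*}
Using skew-symmetry, this is equivalent to the cyclic condition $f(x\cdot_\mr y,z)+f(y\cdot_\mr z,x)+f(z\cdot_\mr x,y)=0$. I will impose this on triples of matrix units $(E_{ij},E_{kl},E_{mn})$ using the multiplication table above. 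Most products vanish. The only nonzero products have left factor in $\{E_{11},E_{12},E_{13},E_{21},E_{22},E_{23},E_{31},E_{32},E_{33}\}$ restricted as listed, so the equations involve only a limited set of coordinates $f_{(ij),(kl)}:=f(E_{ij},E_{kl})$.

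The plan is to organize the computation by a grading to keep it finite and checkable. Assign the weight $\deg E_{ij}$ using the torus $\mathrm{diag}$-action: the product table is homogeneous for the grading $E_{ij}\mapsto(\epsilon_i-\epsilon_j)$ modulo the degeneracies coming from $E_{11}-E_{22}$, $E_{11}-E_{33}$. Concretely, I will check that the table is homogeneous with respect to the $\mathbb Z^2$-grading $w(E_{ij})=e_i-e_j$ projected by $e_1\mapsto 0$ (note $E_{22}\cdot_\mr E_{21}=E_{11}-E_{22}$ has weight $-e_2+ e_2$... which is consistent after projection). The cocycle equations then split by total weight, and a skew form contributes nontrivially only in components where weights pair up. This reduces the $36$ unknowns into small blocks. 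In each block I solve the linear system directly.

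I expect the main obstacle to be the bookkeeping in this linear solve, especially in the weight-zero block containing $E_{11},E_{22},E_{33},E_{21},E_{31},E_{12},E_{13}$-type pairings, where many products are nonzero. The fallback is to obtain the answer $\dim Z^1=7$ by a direct rank computation of the map $f\mapsto d_Cf$ from the $36$-dimensional space to $\mc^2_\tau$, evaluated on a spanning set of triples. Equivalently, one can verify that the $6$ coboundaries $d_C(E_{ij}^*)$ together with one explicit extra cocycle span $Z^1$. I would guess the extra class is a form pairing the weight-$\pm$ elements, such as $E_{21}^*\wedge E_{31}^*$, and I would test it first. This would give $\dim HC^1_\tau(A_\mr)=7-6=1$.
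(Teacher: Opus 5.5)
\textbf{There is a genuine gap.} Your treatment of the coboundaries is correct, and a bit slicker than the paper's. Since $\mc^0_\tau(A_\mr)=A^*$ and $HC^0_\tau(A_\mr)\cong\bfk^3$ was established just before the proposition, rank--nullity gives $\dim B^1_\tau(A_\mr)=6$ without listing the $d_CE_{ij}^*$. Your cyclic form of the cocycle condition is also correct.

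The problem is that the whole content of the proposition is $\dim Z^1_\tau(A_\mr)=7$, and you do not prove it. The paper settles this by exhibiting the explicit cocycles $\zeta_1,\dots,\zeta_7$. You only describe a method, and that method rests on a false claim: the weight $w(E_{ij})=e_i-e_j$ with $e_1\mapsto 0$ is not a grading of $A_\mr$.
\begin{itemize}
\item $E_{11}\cdot_\mr E_{22}=E_{12}$ has weight $0$ on the left and $-e_2$ on the right.
\item $E_{21}\cdot_\mr E_{21}=-E_{21}$ has $2e_2$ versus $e_2$.
\item Your own test case $E_{22}\cdot_\mr E_{21}=E_{11}-E_{22}$ has $e_2$ versus $0$, so it is not ``consistent after projection''.
\end{itemize}
Hence $d_C$ does not preserve these weights, and $Z^1_\tau$ need not split into homogeneous pieces. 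Solving block by block would give a wrong count. Your candidate extra class also fails. For $f=E_{21}^*\wedge E_{31}^*$ we have $E_{21}\cdot_\mr E_{31}=E_{31}\cdot_\mr E_{21}=0$, so $(d_Cf)(E_{21},E_{21},E_{31})=f(-E_{21},E_{31})=-1\neq0$.

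The grading idea can be rescued, but it has to be the right grading. For weights $f_i-f_j$, the operator $\mr$ is homogeneous only when $f_2=f_3$, and then it has degree $f_1-f_2$. The product $\cdot_\mr$ then has that same degree, so the degrees must be shifted. Taking $f=(1,0,0)$ and $\deg E_{ij}=f_i-f_j+1$ gives
\begin{itemize}
\item $\deg E_{21}=\deg E_{31}=0$,
\item $\deg E_{11}=\deg E_{22}=\deg E_{23}=\deg E_{32}=\deg E_{33}=1$,
\item $\deg E_{12}=\deg E_{13}=2$.
\end{itemize}
One checks directly against the multiplication table that this is a genuine grading of $A_\mr$.

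With this grading, $\mc^1_\tau(A_\mr)$ splits into pieces of weights $0,\dots,4$ with dimensions $1,10,14,10,1$. Solving the cocycle equations in each piece gives cocycle dimensions $0,4,3,0,0$; all seven $\zeta_k$ are homogeneous, four in weight $1$ and three in weight $2$. On $A^*$, $d_C$ has ranks $0,4,2$ in weights $0,1,2$, and in weight $1$ the kernel is spanned by $E_{11}^*+E_{22}^*+E_{33}^*$. So the unique class lives in weight $2$, represented by $(E_{22}^*-E_{32}^*)\wedge(E_{23}^*-E_{33}^*)$, as in the paper.

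To complete your argument you must actually carry out these small linear solves. That means showing both that seven independent cocycles exist and that the system has no further solutions.
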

	\begin{proof}
		Since $F(x,y)=-F(y,x)$ for  $F\in \mc_{\tau}^1(A_\mr)$ and $x,y\in A_\mr$,
		$\mc_\tau^1(A_\mr)=\bigwedge^2{A_\mr}^*$ and hence $\dim \mc_\tau^1(A_\mr)=\binom92=36$.
		
		Thus, we obtain
		$Z_{\tau}^1(A_\mr):=\ker\bigl(d_C:\mc_{\tau}^1(A_\mr)\longrightarrow\mc_{\tau}^2(A_\mr)\bigr)=\operatorname{span}\{\zeta_1,\ldots,\zeta_7\}$,
		where
		\begin{align*}
			\zeta_1= &E_{21}^*\wedge E_{22}^*,\qquad    \zeta_2=  E_{11}^*\wedge E_{33}^*-E_{13}^*\wedge E_{31}^*+E_{22}^*\wedge E_{23}^*,\\
			\zeta_3=&E_{21}^*\wedge E_{23}^*-E_{21}^*\wedge E_{33}^*+E_{23}^*\wedge E_{31}^*,\\
			\zeta_4=&E_{11}^*\wedge E_{22}^*-E_{11}^*\wedge E_{33}^*-E_{12}^*\wedge E_{21}^*+E_{13}^*\wedge E_{31}^* -E_{22}^*\wedge E_{33}^*
			+E_{23}^*\wedge E_{32}^*,\\
			\zeta_5=&-E_{21}^*\wedge E_{32}^*+E_{22}^*\wedge E_{31}^*+E_{31}^*\wedge E_{32}^*,\\
			\zeta_6=&E_{31}^*\wedge E_{33}^*,\qquad
			\zeta_7=-E_{11}^*\wedge E_{22}^*+E_{12}^*\wedge E_{21}^*+E_{32}^*\wedge E_{33}^*.
		\end{align*}
		Thus, $\dim Z_{\tau}^1(A_\mr)=7$.  On the other hand, the
		nonzero values of $d_C$ on the coordinate functionals are
		\begin{align*}
			d_CE_{11}^*=&-E_{21}^*\wedge E_{22}^*-E_{31}^*\wedge E_{33}^*,\qquad
			d_CE_{12}^*=E_{11}^*\wedge E_{22}^*-E_{12}^*\wedge E_{21}^*-E_{32}^*\wedge E_{33}^*,\\
			d_CE_{13}^*=&E_{11}^*\wedge E_{33}^*-E_{13}^*\wedge E_{31}^*+E_{22}^*\wedge E_{23}^*,\qquad
			d_CE_{22}^*=E_{21}^*\wedge E_{22}^*,\\
			d_CE_{23}^* =&-E_{21}^*\wedge E_{23}^*+E_{21}^*\wedge E_{33}^*-E_{23}^*\wedge E_{31}^*,\qquad
			d_CE_{32}^*= E_{21}^*\wedge E_{32}^*-E_{22}^*\wedge E_{31}^*-E_{31}^*\wedge E_{32}^*,\\
			d_CE_{33}^*=&E_{31}^*\wedge E_{33}^*.
		\end{align*}
		Consequently, $  B_{\tau}^1(A_\mr):=\operatorname{im}\bigl(d_C:\mc_{\tau}^0(A_\mr)\longrightarrow\mc_{\tau}^1(A_\mr)\bigr)=\operatorname{span}\{\zeta_1,\zeta_2,\zeta_3,\zeta_5,\zeta_6,\zeta_7\}$,
		and $\dim B_{\tau}^1(A_\mr)=6$.  It follows that
		$$
		HC_{\tau}^1(A_\mr)
		=\frac{Z_{\tau}^1(A_\mr)}{B_{\tau}^1(A_\mr)}
		\cong\bfk.
		$$
		Hence $\dim HC_\tau^1(A_\mr)=1$.
	\end{proof}
	A convenient representative of its nonzero class is $\eta   =(E_{22}^*-E_{32}^*)\wedge(E_{23}^*-E_{33}^*)$.
	Indeed, $\eta=\zeta_4+\zeta_2+\zeta_7$, so
	$\eta\in Z_{\tau}^1(A_\mr)$, whereas
	$\eta\notin B_{\tau}^1(A_\mr)$.  Hence, $   HC_{\tau}^1(A_\mr)=\bfk[\eta]$.
	
	By Theorem \ref{th-dl-cyclic-Hochschild}, the cyclic
	cochain complex of $A_\mr$ is isomorphic to the double Lie
	algebra cochain complex.  Therefore,
	$$
	H_{dL}^0(V,\mu)\cong HC_{\tau}^0(A_\mr)\cong\bfk^3,
	\qquad
	H_{dL}^1(V,\mu)\cong HC_{\tau}^1(A_\mr)\cong\bfk.
	$$
	More precisely, under the cochain isomorphism $\Phi$ of
	Theorem \ref{th-dl-cyclic-Hochschild}, the generator of $H_{dL}^1(V,\mu)$ is represented by
	$\nu=\Phi_1^{-1}(\eta)$.
	
	Let  $ F:=\Psi_1(\nu)$. Since $d_C\eta=0$ and $\Phi$ is an isomorphism of cochain complexes, we have
	$d_\mu\nu=[\mu,\nu]_{dL}=0$.
	Equivalently, the dg Lie algebra isomorphism
	$\Psi$ gives $d_\mr F=[\mr,F]_{rb}=0$.
	Since $\Phi_1=J_1\Psi_1$, the map $F$ is determined by $\langle F(X),Y\rangle=\eta(Y,X)$ for $X,Y\in A$.
	Its nonzero values are
	\begin{align*}
		F(E_{22})=E_{33}-E_{32},\qquad  F(E_{23})=E_{22}-E_{23},\qquad
		F(E_{32})=E_{32}-E_{33},\qquad  F(E_{33})=E_{23}-E_{22}.
	\end{align*}

	\noindent {\bf Acknowledgments.}
	This work is supported by NSFC (12271265, 12261131498, W2412041,
	12671037), Fundamental Research Funds for the Central Universities
	and Nankai Zhide Foundation.
	
	\smallskip
	
	\noindent
	{\bf Declaration of interests. } The authors have no conflicts of interest to disclose.
	
	\smallskip
	
	\noindent
	{\bf Data availability. } No new data were created or analyzed in this study.

\end{document}